\documentclass[a4paper,draft,reqno,12pt]{amsart}
\usepackage[english]{babel}
\usepackage{amsmath}
\usepackage{amssymb}
\usepackage{amscd}
\usepackage{amsthm}
\usepackage{euscript}
\newtheorem{theor}{Theorem}[section]
\newtheorem{prop}[theor]{Proposition}
\newtheorem{lem}[theor]{Lemma}

\newtheorem{cor}[theor]{Corollary}
\theoremstyle{definition}
\newtheorem{de}[theor]{Definition}
\newtheorem{ex}[theor]{Example}
\theoremstyle{remark}
\newtheorem {re}[theor]{Remark}

\DeclareMathOperator{\Aut}{Aut}

\def\Ker{{\rm Ker}\,}

\def\HH{{\mathbb H}}
\def\SAut{{\mathrm{SAut}}}
\def\UU{{\mathbb U}}
\def\GG{{\mathbb G}}

\def\CC{{\mathbb C}}
\def\KK{{\mathbb K}}
\def\LL{{\mathbb L}}
\def\TT{{\mathbb T}}
\def\ZZ{{\mathbb Z}}

\def\DD{\mathbb{D}}
\def\VV{\mathbb{V}}

\sloppy
\textwidth=16.3cm
\oddsidemargin=0cm
\topmargin=0cm
\headheight=0cm
\headsep=1cm
\textheight=23.5cm
\evensidemargin=0cm
\begin{document}
\date{}
\title[Automorphisms of Danielevski varieties]{Automorphisms of Danielewski varieties}
\author{Sergey A. Gaifullin}
\address{Lomonosov Moscow State University, Faculty of Mechanics and Mathematics, Department of Higher Algebra, Leninskie Gory 1, Moscow, 119991 Russia; \linebreak and \linebreak
National Research University Higher School of Economics, Faculty of Computer Science, Kochnovskiy Proezd 3, Moscow, 125319 Russia}
\email{sgayf@yandex.ru}

\subjclass[2010]{Primary 13N15, 14J50;\  Secondary 14R20, 13A50}

\keywords{Affine variety, automorphism, graded algebra, torus action, locally nilpotent derivation, Makar-Limanov invariant}

\maketitle

\begin{abstract}
In 2007, Dubouloz introduced Danielewski varieties. Such varieties generalize Danielewski surfaces and provide counterexamples to generalized Zariski cancellation problem in arbitrary dimension. In the present work we describe the automorphism group of a Danielewski variety. This result is a generalization of a description of automorphisms of Danielewski surfaces due to Makar-Limanov.
\end{abstract}

\section{Introduction}

We assume that the base field $\KK$ is an algebraically closed field of characteristic zero. Let $V$ and $W$ be affine algebraic varieties over $\KK$. Generalized Zariski cancellation problem asks whether $V\times \KK\cong W\times \KK$ implies $V\cong W$. In 1989 Danielewski \cite{Da} introduced a class of surfaces given by $xy^n=P(z)$. Nowadays such varieties  are called Danielewski surfaces. Danielewski surfaces  corresponding to $n=1$ and $n=2$  provide a counterexample to the generalized Zariski cancellation problem. One of the possible ways to prove that varieties $\VV\left(xy-P(z)\right)$ and $\VV\left(xy^2-P(z)\right)$ are nonisomorphic is suggested by Makar-Limanov in \cite{ML1,ML}. He computes the intersection of all kernels of locally nilpotent derivations in both cases and shows that, if $n=1$, the intersection is the field $\KK$, and if $n=2$, the intersection is $\KK[y]$. This intersection later was named {\it Makar-Limanov invariant}. 

Moreover,  Makar-Limanov in \cite{ML1,ML} computes generators of automorphism groups of Danielewski surfaces. This inspired a lot of works on automorphisms of Danielewski surfaces and their generalizations. In~\cite{C} Crachiola investigated automorphism groups of surfaces of the form $\VV(xy^n-z^2-h(y)z)$ over a field of arbitrary characteristic. In~\cite{DP} Dubouloz and Poloni considered a more general class of surfaces $\VV(xy^n-Q(y, z ))$, where $n\geq 2$ and $Q(y, z )$ is a polynomial with coefficients in an arbitrary base field such that $Q(0,z)$ splits with $r \geq 2$ simple roots. Generators of their automorphism groups were computed. In~\cite{Po} classifications of varieties of the form $\VV(xy^n-Q(y, z ))$ up to isomorphism and up to automorphism of ambient affine 3-space are obtained. In \cite{BV} Bianchi and Veloso considered one more generalization, surfaces $\VV(xf(y)-Q(y,z))$, where $\deg f\geq 2$ and 
$$Q(y,z)=z^d+s_{d-1}(y)z^{d-1}+\ldots+s_0(y),\qquad d\geq 2.$$ For such surfaces it was proved that the Makar-Limanov invariant is $\KK[y]$. Using this generators of the automorphism group of the varieties $\VV(xf(y)-Q(z))$ were computed.

In 2007, Dubouloz \cite{D} considered another generalization of Danielewski surfaces. He introduced  varieties given by equations of the form 
$$
xy_1^{k_1}\ldots y_m^{k_m}=z^d+s_{d-1}(y_1,\ldots, y_m)z^{d-1}+\ldots+s_0(y_1,\ldots y_m),
$$
which was called {\it Danielewski varieties}.
He proved that if $d\geq 2$ and all $k_i\geq 2$, then the Makar-Limanov invariant of such a variety is equal to $\KK[y_1,\ldots, y_m]$. Using this Dubouloz found counterexamples to generalized Zariski cancellation problem in arbitrary dimension. 

In this paper we describe automorphism groups of Danielewski varieties.  To do this we need to investigate automorphism groups of other classes of varieties. So, we proceed in three steps.

Sections 2-4 contain preliminaries and lemmas, which we use later. 

First of all, in Section~\ref{no}, we consider varieties of the form $Z=\VV(y_1^{k_1}\ldots y_m^{k_m}-P(z))$.  There exists an $(m-1)$-dimensional algebraic torus action on such a variety. We elaborate some technique, which shows that if all $k_i\geq 2$, then $Z$ is {\it rigid}, that is it does not admit any nontrivial locally nilpotent derivations. Then we compute the generators of the automorphism group of $Z$, see Theorem~\ref{vtor}. We prove that the automorphism group $\Aut(Z)$ is a semidirect product of a diagonalizable group multiplying variables by constants and a finite group permuting variables. Therefore, $\Aut(Z)$ is a linear algebraic group. Moreover, it is a finite extension of an algebraic torus. For generic variety $Z$, the automorphism group is a torus, but in special cases the finite part can be rather large. We obtain criteria of commutativity, connectivity and solvability of $\Aut(Z)$.

Secondly, in Section~\ref{oo}, we investigate varieties of the form $Y=\VV(xy_1^{k_1}\ldots y_m^{k_m}-P(z))$ with  all $k_i\geq 2$. Such varieties are particular cases of Danielewski varieties. In some sense these particular cases  are the most interesting ones because they provide counterexamples to generalized Zariski cancellation problem.
Using rigidity of $Z$ we prove that the Makar-Limanov invariant of $Y$ equals $\KK[y_1,\ldots, y_m]$ and all locally nilpotent derivations are replicas of a {\it canonical} one, see Definitin \ref{cd}. We compute generators of $\Aut(Y)$, see Proposition~\ref{drep}. The automorphism group of $Y$ is generated by exponents of replicas of the canonical locally nilpotent derivation, the quasitorus acting by multiplying variables by constants, and swappings of variables with coinciding $k_i$.   Moreover, $\Aut(Y)$
is isomorphic to a semidirect products of these subgroups, see Theorem~\ref{vtt}. We prove that $\Aut(Y)$ is never commutative and it is solvable if and only if there are no five variables $y_i$ with coinciding~$k_i$.

In Section~\ref{las}, we use technique inspired by \cite{ML} to compute the group of automorphisms of an arbitrary Danielewski variety $X$.
The main idea is to consider a filtration on $\KK[X]$ and to prove that the associated graduate algebra $\mathrm{Gr}(\KK[X])$ is isomorphic to $\KK[Y]$ for some~$Y$ considered in  Section~\ref{oo}. Then we prove that every automorphism of $X$ respects this filtration, see Lemma~\ref{filtr}. Using this we introduce a homomorphism $\Phi\colon \Aut(X)\rightarrow\Aut(Y)$. This allowes to describe automorphisms of $X$ using the description of automorphisms of~$Y$, see Theorem~\ref{main}. Again all locally nilpotent derivations on $X$ are replicas of a {\it canonical} one, see Definition~\ref{cade}. The automorphism group of $X$ is isomorphic to a semidirect product of commutative group consisting  of exponents of all replicas of the canonical locally nilpotent derivation and a  {\it canonical group} $\mathbb{G}$ of $X$. The group $\mathbb{G}$ is a finite extension of a torus. Every element of $\mathbb{G}$ permutes the variables and multiplies them by constants. For generic variety~$X$ the group $\mathbb{G}$ is trivial and $\Aut(X)$ is commutative. We prove that it is a criterium of commutativity of $\Aut(X)$. Also we give a sufficient condition of its solvability.

The author is grateful to Ivan Arzhantsev for useful discussions.

\section{Derivations}

Let $A$ be a commutative associative algebra over $\KK$.
A linear mapping $\partial\colon A\rightarrow A$ is called a {\it derivation} if it satisfies the Leibniz rule $\partial(ab)=a\partial(b)+b\partial(a)$ for all $a,b\in A$.
A derivation is {\it locally nilpotent} (LND) if for every $a\in A$ there is a positive integer $n$ such that $\partial^n(a)=0$.
A derivation is {\it semisimple} if there exists a basis of $A$ consisting of $\partial$-semi-invariants. Recall that $a\in A$ is a $\partial$-semi-invariant if $\partial(a)=\lambda a$ for some $\lambda\in\KK$.

If we have an algebraic action of the additive group $(\KK,+)$ on $A$, we obtain a one-parameter subgroup in $\Aut(A)$. Then its tangent vector at unity is an LND. Exponential mapping defines a bijection between LNDs and elements in subgroups of $\mathrm{Aut}(A)$ isomorphic to~$(\KK,+)$. Similarly, if  we have an action of the multiplicative group~$(\KK^\times,\cdot)$ on~$A$, then its tangent vector at unity is a semisimple derivation.

Let $F$ be an abelian group.
An algebra $A$ is called {\it $F$-graded} if 
$$A=\bigoplus_{f\in F}A_f,$$ 
and $A_fA_g\subset A_{f+g}$ for all $f$ and $g$ in $F$.
A derivation $\partial\colon A\rightarrow A$ is {\it $F$-homogeneous of degree $f_0\in F$}, if for every $a\in A_f$ we have $\partial(a)\in A_{f+f_0}$.

A derivation $\partial\colon A\rightarrow A$ is called {\it locally bounded}, if any element $a\in A$ is contained in a $\partial$-invariant linear subspace $V\subset A$ of finite dimension.
It is easy to see that all semisimple derivations and all LNDs are locally bounded.

Let A be a finitely generated $\ZZ$-graded algebra and $\partial$ be a derivation of $A$. Then $\partial$ can be decomposed  in to  $\partial=\sum_{i=l}^k\partial_i$, where $\partial_i$ is a homogeneous derivation of degree $i$. Indeed, let $a_1,\ldots,a_m$ be generators of $A$. Let $\deg a_j=u_j$. Then $$\partial (a_j)=\sum_{i=l_j}^{k_j}b_{i+u_j},$$ where $b_{i+u_j}\in A_{i+u_j}$. Denote $l=\mathrm{min}\{l_1,\ldots,l_m\}$, $k=\mathrm{max}\{k_1,\ldots,k_m\}$. Using the Leibniz rule we obtain 
$$\forall a\in A_j \qquad\partial (a)\in\bigoplus_{i=l}^k A_{j+i}.$$ So we have $\partial=\sum_{i=l}^k\partial_i$, where $\partial_i\colon A_j\rightarrow A_{j+i}$ is a linear mapping. The Leibniz rule for $\partial_j$ follows from the Leibniz rule for $\partial$.

\begin{re}
Further when we write $\partial=\sum_{i=l}^k\partial_i$, we assume that $\partial_l\neq 0$ and $\partial_k\neq 0$.
\end{re}

\begin{lem}\label{fl}(See \cite{Re} for (1) and \cite[Lemma~3.1] {FZ} for (2))
Let $A$ be a finitely generated $\ZZ$-graded algebra. Assume $\partial\colon A\rightarrow A$ is a derivation. We have $\partial=\sum_{i=l}^k\partial_i$, where $\partial_i$ is a homogeneous derivation of degree $i$. Then

1) if $\partial$ is an LND then $\partial_l$ and $\partial_k$ are LNDs.

2) if $\partial$ is locally bounded then if $l\neq 0$, $\partial_l$ is an LND, and if $k\neq 0$, $\partial_k$ is an LND.
\end{lem}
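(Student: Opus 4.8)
The plan is to reduce everything to the behaviour of the extreme homogeneous components of the iterates $\partial^n$. First I would record the basic computation: if $a\in A_j$ is homogeneous, then expanding $\partial^n=(\sum_{i=l}^k\partial_i)^n$ and applying it to $a$, every summand $\partial_{i_1}\cdots\partial_{i_n}(a)$ lies in $A_{j+i_1+\cdots+i_n}$. Hence the top graded component of $\partial^n(a)$, which lives in degree $j+nk$, is exactly $\partial_k^n(a)$, and the bottom component, in degree $j+nl$, is exactly $\partial_l^n(a)$, since these extreme degrees are attained only by the pure products $\partial_k\cdots\partial_k$ and $\partial_l\cdots\partial_l$. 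I would also note that, because each $\partial_i$ is homogeneous, it suffices to prove local nilpotency of $\partial_l$ and $\partial_k$ on homogeneous elements: a general $a$ is a finite sum of homogeneous components, and one takes the maximum of the corresponding exponents.

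For part (1), suppose $\partial$ is an LND and let $a\in A_j$ be homogeneous. Then $\partial^n(a)=0$ for some $n$, so every graded component of $\partial^n(a)$ vanishes; in particular its top and bottom components give $\partial_k^n(a)=0$ and $\partial_l^n(a)=0$. Thus $\partial_l$ and $\partial_k$ are locally nilpotent on homogeneous elements, hence LNDs. No assumption on $l$ or $k$ is needed here, because $\partial^n(a)$ vanishes identically.

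For part (2), suppose $\partial$ is locally bounded and fix a homogeneous $a\in A_j$. By definition $a$ lies in a finite-dimensional $\partial$-invariant subspace $W$, so all iterates $\partial^n(a)$ lie in $W$. Let $\widetilde W$ be the span of all homogeneous components of elements of $W$; since $W$ is finite-dimensional only finitely many degrees occur, so $\widetilde W$ is again finite-dimensional and graded. By the computation above, $\partial_k^n(a)$ is a homogeneous component of $\partial^n(a)\in W$, hence $\partial_k^n(a)\in\widetilde W$ for all $n$; likewise $\partial_l^n(a)\in\widetilde W$. Now the elements $\partial_k^n(a)$ are homogeneous of degrees $j+nk$, which are pairwise distinct precisely when $k\neq 0$. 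A finite-dimensional graded space has nonzero components in only finitely many degrees, so $\widetilde W_{j+nk}=0$ for all large $n$, forcing $\partial_k^n(a)=0$; hence $\partial_k$ is an LND when $k\neq 0$. The same argument applied to the degrees $j+nl$ shows $\partial_l$ is an LND when $l\neq 0$.

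The one delicate point is exactly this last step of part (2): one must pass from the possibly non-graded invariant subspace $W$ to the graded space $\widetilde W$, check that the extreme components $\partial_k^n(a)$ and $\partial_l^n(a)$ genuinely land in it, and observe that the hypotheses $k\neq 0$ and $l\neq 0$ are precisely what make the relevant degrees $j+nk$ and $j+nl$ distinct, which is the mechanism that converts finite-dimensionality into local nilpotency. This also explains why the degree-$0$ component must be excluded: if $l=0$ or $k=0$ the corresponding degrees do not separate and no conclusion can be drawn, as a nontrivial semisimple derivation of degree $0$ already shows.
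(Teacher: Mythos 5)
Your proof is correct, and it coincides with the paper's treatment in the only sense available: the paper does not prove this lemma itself but cites Rentschler for (1) and Flenner--Zaidenberg, Lemma~3.1, for (2), and your argument is precisely the standard one underlying those references. The key computation --- that the degree-$(j+nk)$ (resp.\ degree-$(j+nl)$) component of $\partial^n(a)$ is exactly $\partial_k^n(a)$ (resp.\ $\partial_l^n(a)$), since the extreme degree sum $i_1+\cdots+i_n=nk$ forces all $i_s=k$ --- is handled correctly, as are the two genuinely delicate points in (2): passing from the possibly non-graded invariant subspace $W$ to its finite-dimensional graded hull $\widetilde{W}$, and using $k\neq 0$ (resp.\ $l\neq 0$) to make the degrees $j+nk$ pairwise distinct so that finite-dimensionality forces $\partial_k^n(a)=0$ for large $n$.
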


Lemma \ref{fl}(1) implies the following lemma.

\begin{lem}\label{flz}
If a $\ZZ$-graded algebra $A$ admits a nonzero LND, then $A$ admits a nonzero $\ZZ$-homogeneous LND. 
\end{lem}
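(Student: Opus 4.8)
The plan is to derive this immediately from the homogeneous decomposition of a derivation together with Lemma~\ref{fl}(1). First I would take a nonzero LND $\partial$ on $A$. Since $A$ is finitely generated as a $\ZZ$-graded algebra (the standing hypothesis in this section), the construction preceding Lemma~\ref{fl} applies: I may write $\partial=\sum_{i=l}^k\partial_i$, where each $\partial_i$ is a homogeneous derivation of degree $i$ and the sum is finite. By the convention fixed in the Remark, the extreme components satisfy $\partial_l\neq 0$ and $\partial_k\neq 0$; in particular, because $\partial\neq 0$, at least one homogeneous component is nonzero and the decomposition is genuinely nontrivial.

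Next I would invoke Lemma~\ref{fl}(1): since $\partial$ is an LND, its lowest-degree component $\partial_l$ (and likewise its highest-degree component $\partial_k$) is again an LND. As $\partial_l$ is homogeneous of degree $l$ and nonzero by the Remark convention, it is exactly a nonzero $\ZZ$-homogeneous LND, which is what we want. This completes the argument, and the whole proof is just a one-line application once the decomposition is in place.

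I expect the only delicate point to be the finiteness of the decomposition $\partial=\sum_{i=l}^k\partial_i$: it is precisely finite generation of $A$ that guarantees the existence of a well-defined lowest and highest nonzero homogeneous component, so that Lemma~\ref{fl}(1) can be applied to an honest extremal term. If one dropped finite generation, a derivation could have infinitely many nonzero homogeneous components with no smallest or largest degree, and the argument would break down. In the present setting, however, $A$ is the coordinate ring of an affine variety, hence finitely generated, so this causes no difficulty.
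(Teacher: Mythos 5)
Your proof is correct and is exactly the paper's argument: the paper derives Lemma~\ref{flz} directly from Lemma~\ref{fl}(1) by decomposing a nonzero LND into homogeneous components and taking the extremal term $\partial_l$, which is nonzero and homogeneous by the Remark's convention. Your remark about finite generation being the hypothesis that makes the decomposition finite is also consistent with the paper's standing assumptions.
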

Let $F\cong\ZZ^n$. Assume $A$ is a finitely generated $F$-graded algebra. Applying the result of Lemma~\ref{flz} $n$ times we obtain the following corollary.
\begin{lem}\label{flza}
 The algebra $A$ admits a nonzero LND if and only if $A$ admits a nonzero $F$-homogeneous LND. 
\end{lem}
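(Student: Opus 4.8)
The plan is to iterate Lemma~\ref{flz} once for each coordinate of $F\cong\ZZ^n$. The "if" direction is immediate, since an $F$-homogeneous LND is in particular a nonzero LND; so the content is entirely in manufacturing an $F$-homogeneous LND out of an arbitrary nonzero one.

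First I would fix the standard basis $e_1,\ldots,e_n$ of $F=\ZZ^n$ and, for each $1\le i\le n$, introduce the coarser $\ZZ$-grading $G_i$ obtained by projecting the $F$-degree onto its $i$-th coordinate, so that $A=\bigoplus_{j\in\ZZ}\bigl(\bigoplus_{f_i=j}A_f\bigr)$ may be regarded as a $\ZZ$-graded algebra via $G_i$. Since $A$ is finitely generated, each $G_i$ puts us in the situation of Lemma~\ref{flz}. The key elementary observation is that these gradings are mutually compatible: if a derivation $\partial$ is homogeneous with respect to $G_{i_1},\ldots,G_{i_r}$, then decomposing $\partial$ with respect to a further $G_{i_{r+1}}$ splits it into components that remain homogeneous with respect to each of $G_{i_1},\ldots,G_{i_r}$. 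Indeed, for $a\in A_f$ the $G_{i_{r+1}}$-decomposition of $\partial(a)$ merely regroups the $F$-homogeneous pieces of $\partial(a)$ according to their $(i_{r+1})$-th coordinate, which cannot disturb the constraints already imposed on the remaining coordinates.

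Next I would run the iteration. Starting from any nonzero LND $\partial$ on $A$, I regard $A$ as $\ZZ$-graded via $G_1$ and apply Lemma~\ref{flz}; here I would use that its proof exhibits the homogeneous LND as a top or bottom $G_1$-component of $\partial$ (these components being nonzero LNDs by Lemma~\ref{fl}(1) and the convention of the Remark). This yields a nonzero $G_1$-homogeneous LND. Regarding it now as $\ZZ$-graded via $G_2$ and applying Lemma~\ref{flz} again produces a nonzero $G_2$-homogeneous LND which, by the compatibility observation, is still $G_1$-homogeneous. Repeating for $G_3,\ldots,G_n$ gives, after $n$ applications, a nonzero LND $\delta$ that is simultaneously homogeneous with respect to all of $G_1,\ldots,G_n$.

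Finally I would note that simultaneous homogeneity with respect to all coordinate gradings is precisely $F$-homogeneity: if $\delta$ shifts the $i$-th coordinate of the $F$-degree by a fixed constant $c_i$ for each $i$, then $\delta(A_f)\subseteq A_{f+c}$ with $c=(c_1,\ldots,c_n)\in F$, so $\delta$ is $F$-homogeneous of degree $c$. Thus $\delta$ is the desired nonzero $F$-homogeneous LND. I expect the only delicate point to be the compatibility observation combined with the fact that Lemma~\ref{flz} returns a derivation which is literally a graded component of its input; this is exactly what guarantees that the homogeneity accumulated in earlier steps survives the later ones, and I would state it with care to make the iteration airtight.
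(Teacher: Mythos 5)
Your proposal is correct and follows exactly the paper's route: the paper's entire proof is the remark that one applies Lemma~\ref{flz} once per coordinate of $F\cong\ZZ^n$, which is precisely your iteration. Your compatibility observation --- that the homogeneous LND produced by Lemma~\ref{flz} is literally a top or bottom graded component, so homogeneity with respect to earlier coordinate gradings survives --- is the detail the paper leaves implicit, and you have verified it correctly.
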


Let $X$ be an affine algebraic variety and $A=\KK[X]$ be the algebra of regular functions on~$X$. Then $\ZZ^n$-gradings on $\KK[X]$ are in bijection with actions of $n$-dimensional algebraic torus $T=(\KK^\times)^n$ on $X$. 
If we have a $\ZZ^n$-homogeneous derivation of $\KK[X]$, we call it {\it $T$-homogeneous derivation}.

Let $F$ and $S$ be abelian groups. Consider an $F$-graded algebra
$$A=\bigoplus_{f\in F}A_f.$$ 
Let $\pi\colon F\rightarrow S$ be a homomorphism. 
Then $A$ can be considered as $S$-graded algebra via
$$A=\bigoplus_{s\in S}A_s, \qquad\text{where}\ A_s=\bigoplus_{\pi(f)=s}A_f.$$ 

\begin{lem}  \label{grub}
If $\partial$ is an $F$-homogeneous derivation of degree $f_0$, then~$\partial$ is an $S$-homogeneous derivation of degree $s_0=\pi(f_0)$.
\end{lem}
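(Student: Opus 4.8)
The plan is to verify the defining condition of $S$-homogeneity directly, by decomposing an arbitrary element of an $S$-homogeneous component into its $F$-homogeneous pieces and applying the hypothesis on $\partial$ piecewise. Everything reduces to the single fact that $\pi$ respects addition.

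First I would fix $s\in S$ and take an arbitrary element $a\in A_s$, where $A_s$ refers to the coarsened ($S$-)grading. By the very definition $A_s=\bigoplus_{\pi(f)=s}A_f$, so I can write $a=\sum_f a_f$ as a finite sum in which $a_f\in A_f$ and every index $f$ occurring satisfies $\pi(f)=s$. Next, using linearity of $\partial$ together with the $F$-homogeneity hypothesis, I would compute $\partial(a)=\sum_f\partial(a_f)$ with $\partial(a_f)\in A_{f+f_0}$ for each $f$. The decisive observation is that $\pi(f+f_0)=\pi(f)+\pi(f_0)=s+s_0$, since $\pi$ is a group homomorphism and $s_0=\pi(f_0)$. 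Hence each summand $A_{f+f_0}$ is one of the $F$-homogeneous components that are assembled into the $S$-component $A_{s+s_0}=\bigoplus_{\pi(g)=s+s_0}A_g$. Therefore $\partial(a_f)\in A_{s+s_0}$ for every $f$, and summing yields $\partial(a)\in A_{s+s_0}$, which is exactly the assertion that $\partial$ is $S$-homogeneous of degree $s_0$.

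The only point requiring any care is the bookkeeping of the direct sum: one must use that an element of the $S$-component $A_s$ genuinely splits into $F$-homogeneous pieces supported on the fiber $\pi^{-1}(s)$, so that it suffices to control $\partial$ on each such piece. Once this is made explicit, the conclusion is an immediate consequence of $\pi$ preserving addition. I do not expect a genuine obstacle here; the statement is essentially a compatibility check between the refinement of the grading along $\pi$ and the homogeneity of $\partial$, and the proof is correspondingly short.
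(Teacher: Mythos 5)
Your proposal is correct and coincides with the paper's own proof: both decompose $a\in A_s$ into $F$-homogeneous pieces $a_f$ with $\pi(f)=s$, apply $F$-homogeneity of $\partial$ to each piece, and use that $\pi$ is a homomorphism to conclude $\partial(a_f)\in A_{f+f_0}\subset A_{s+s_0}$. There is nothing to add; the bookkeeping point you flag is exactly the content of the definition of the coarsened grading.
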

\begin{proof}
Let $a\in A_s$. Then 
$$a=\sum_{\pi(f)=s}a_f,$$ where $a_f\in A_f$. We have $\partial(a)=\sum(\partial(a_f))$. Since $\partial$ is $F$-homogeneous of degree $f_0$, we obtain $\partial(a_f)\in A_{f+f_0}$. But 
$$\pi(f+f_0)=\pi(f)+\pi(f_0)=s+s_0.$$ So, $\partial(a)\in A_{s+s_0}$. 
\end{proof}

Let us use the notation $a\mid b$ if $a$ divides $b$.  

\begin{lem}\label{semisimp}
Assume $A$ is a domain. Let $\delta$ be a semisimple derivation, corresponding to a subgroup $\Lambda~\cong~\KK^\times$ of $\Aut(A)$. Suppose for some $f\in A$ we have $f\mid\delta(f)$. Then $f$ is $\Lambda$-homogeneous, and hence, for every $\varphi\in\Lambda$, there exists $\lambda\in\KK\setminus\{0\}$ such that  $\varphi^*(f)=\lambda f$.
\end{lem}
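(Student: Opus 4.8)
The plan is to translate the semisimplicity of $\delta$ into a $\ZZ$-grading and then show that the divisibility condition forces $f$ to lie in a single graded piece. First I would recall that a semisimple derivation coming from a one-parameter subgroup $\Lambda\cong\KK^\times$ is precisely the infinitesimal generator of the associated $\ZZ$-grading $A=\bigoplus_{n\in\ZZ}A_n$, where $A_n$ is the weight space on which a fixed $\varphi\in\Lambda$ acts by $\varphi^*|_{A_n}=\lambda_\varphi^{\,n}$ for a suitable $\lambda_\varphi\in\KK^\times$, and on which $\delta$ acts as the scalar $n$. Being $\Lambda$-homogeneous then means exactly that $f$ lies in one of the spaces $A_n$, in which case the final assertion $\varphi^*(f)=\lambda_\varphi^{\,n}f$ (with $\lambda_\varphi^{\,n}\neq 0$) is immediate. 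Thus the whole lemma reduces to proving that $f$ is homogeneous.

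Next I would decompose $f=\sum_{n=l}^{k}f_n$ into homogeneous components with $f_l\neq 0$ and $f_k\neq 0$, and rewrite the hypothesis $f\mid\delta(f)$ as $\delta(f)=gf$ for some $g\in A$. If $g=0$, then $\delta(f)=0$, so $f\in A_0=\Ker\delta$ and we are done; hence I assume $g\neq 0$ and decompose $g=\sum_{n=l_g}^{k_g}g_n$ with $g_{l_g}\neq 0$ and $g_{k_g}\neq 0$. The key observation is that $\delta(f)=\sum_n n f_n$ is supported in degrees between $l$ and $k$, whereas the top homogeneous component of $gf$ is $g_{k_g}f_k$ in degree $k_g+k$ and its bottom component is $g_{l_g}f_l$ in degree $l_g+l$; both are nonzero precisely because $A$ is a domain. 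Comparing supports, $k_g+k\le k$ and $l_g+l\ge l$ force $k_g\le 0\le l_g$, and since $l_g\le k_g$ this yields $l_g=k_g=0$. Hence $g\in A_0$ is homogeneous of degree zero.

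Finally, with $g\in A_0$, I would compare the degree-$n$ components of $\delta(f)=gf$: since $gA_n\subset A_n$, this reads $n f_n=g f_n$, that is $(g-n)f_n=0$ for every $n$. For each $n$ with $f_n\neq 0$ the domain property gives $g=n\cdot 1\in\KK$. If $f$ had two distinct nonzero components $f_{n_1}$ and $f_{n_2}$, we would obtain $n_1=g=n_2$, a contradiction; therefore $f$ has a single nonzero homogeneous component and is $\Lambda$-homogeneous, as required. The only genuine obstacle is the step isolating $g$ in degree zero: it is exactly here that one must use that $A$ has no zero divisors, since otherwise the top and bottom terms of $gf$ could cancel and $g$ need not reduce to a scalar.
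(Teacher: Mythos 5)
Your argument is correct and follows essentially the same route as the paper: pass to the $\ZZ$-grading defined by $\Lambda$, write $\delta(f)=gf$, use the domain hypothesis to compare the extreme homogeneous components of $gf$ with the support $[l,k]$ of $\delta(f)=\sum_n nf_n$ to force $g\in A_0$, and then compare components to get $(g-n)f_n=0$, so $g=n$ and $f$ is homogeneous. Your explicit handling of the trivial case $g=0$ is a small point of extra care not spelled out in the paper, but the method is identical.
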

\begin{proof}
We have $\delta(f)=fg$. Let us consider $\ZZ$-grading corresponding to $\Lambda$. Let $f~=~\sum_{i=l}^k f_i$ and $g=\sum_{j=p}^q g_j$ be decompositions onto homogeneous components. Then $\delta(f)~=~\sum_{u=l+p}^{k+q}\delta(f)_u$. Since $A$ is a domain, $\delta(f)_{l+p}\neq 0$ and $\delta(f)_{k+q}\neq 0$. But $\delta$ acts on $A_i$ by multiplying by $i$, that is $\delta(f)~=~\sum_{i=l}^k if_i$. So, we have $p=q=0$. Hence, $g\in A_0$. Then $\delta(f)_i=g f_i=i f_i$. Hence, $f$ is homogeneous of some degree $i$. Thus, $\varphi^*(f)=t^i f$ for some $t\in \KK\setminus\{0\}$. 
\end{proof}

We would like to investigate the automorphism group $\Aut(X)$ of an affine algebraic variety $X$. The first step is to investigate the subgroup of $\Aut(X)$ generated by all algebraic subgroups isomorphic to $(\KK,+)$. This subgroup is called the subgroup {\it of special automorphisms}. We denote it by $\mathrm{SAut}(X)$. The subring $\mathrm{ML}(X)\subset \KK[X]$ consisting of all $\mathrm{SAut}(X)$-invariants is called the  {\it Makar-Limanov invariant} of~$X$. In other words $\mathrm{ML}(X)$ is the intersection of kernels of all LNDs of $\KK[X]$. If $\mathrm{ML}(X)=\KK[X]$, that is $\KK[X]$ does not admit any nontrivial LND, then 
the variety $X$ is called {\it rigid}. 

If $\partial$ is an LND on $X$, 
then its {\it replicas} $f\partial$, 
where 
$f\in\mathrm{Ker} \partial$, are LNDs. Exponents of all replicas of an LND $\partial$ give a commutative subgroup $\UU(\partial)=\left\{\exp(f\partial)\mid f\in\Ker\partial\right\}$ in $\SAut(X)$. Let us call $\UU(\partial)$ the {\it big unipotent subgroup corresponding to} $\partial$.
An affine algebraic variety $X$ is called {\it almost rigid}, if there is an LND $\overline{\partial}\colon\KK[X]\rightarrow\KK[X]$ such that every LND on $\KK[X]$ is a replica of 
$\overline{\partial}$. If $X$ is almost rigid, then $\SAut(X)=\UU(\overline{\partial})$ is commutative.

\section{$m$-suspensions}\label{tre}

Let $X$ be an affine algebraic variety. Given a nonconstant regular function $f\in\KK[X]$, we can define a new affine variety 
$$Y=\mathrm{Susp}(X,f)=\VV\left(uv-f(x)\right)\subset \KK^2\times X$$
called a {\it suspension} over $X$.
LNDs on suspensions are investigated in \cite{AKZ}. Recall that a variety is called {\it flexible} if the tangent space at every regular point is generated by tangent vectors to orbits of some $(\KK,+)$-actions. In \cite{AFKKZ} it is proved that for an irreducible affine variety of dimension $\geq 2$ this condition is equivalent to transitivity and infinitely transitivity of $\mathrm{SAut}(X)$-action on $X$. A suspension over a flexible affine variety is flexible, see \cite[Theorem~0.2(3)]{AKZ}. Let us consider a generalization of this construction.

\begin{de}
Let $X$ be an affine variety. Given a nonconstant regular function $f\in\KK[X]$ and positive integers $k_1,\ldots,k_m$, we define a new affine variety 
$$Y=\mathrm{Susp}(X,f,k_1,\ldots, k_m)=\VV\left(y_1^{k_1}y_2^{k_2}\ldots y_m^{k_m}-f(x)\right)\subset \KK^m\times X$$
called an {\it $m$-suspension} with weights $k_1, \ldots, k_m$ over $X$.
\end{de}

Recall that a {\it quasitorus} is a direct product of a torus and a finite commutative group. We have a natural diagonal action of $m$-dimensional algebraic torus $(\KK^\times)^m$ on $\KK[y_1,\ldots,y_m]$. The stabilizer~$\HH$ of the monomial $y_1^{k_1}y_2^{k_2}\ldots y_m^{k_m}$  is isomorphic to the direct product of the $(m-1)$-dimensional torus~$\TT$ and a finite cyclic group $\ZZ_{\gcd(k_1,\ldots,k_m)}$. There is an effective action of $\HH$ on $Y=\mathrm{Susp}(X,f,k_1,\ldots, k_m)$. 
\begin{de} 
We call $\HH$ {\it the proper quasitorus} of $Y$.  
\end{de}

There is a natural action of the symmetric group $\mathrm{S}_m$ on $\KK[y_1,\ldots,y_m]$. The stabilizer~$S(Y)$ of the monomial $y_1^{k_1}y_2^{k_2}\ldots y_m^{k_m}$  is isomorphic to 
$$\mathrm{S}_{m_1}\times\ldots\times\mathrm{S}_{m_n},$$
where $m=m_1+\ldots+m_n$. 
Each $\mathrm{S}_{m_i}$ permutes $y_j$ with fixed $k_j$. There is an effective action of $S(Y)$ on $Y=\mathrm{Susp}(X,f,k_1,\ldots, k_m)$. 
\begin{de}
We call $S(Y)$ the {\it symmetric group} of $Y$. 
\end{de}

During this section we assume, that $Y=\mathrm{Susp}(X,f,k_1,\ldots, k_m)$ is irreducible. Lemma~\ref{ir} gives a criterium of $Y$ to be irreducible in case $X\cong\KK$.

$\TT$-action on $Y$ corresponds to a $\ZZ^{m-1}$-grading of $\KK[Y]$. Often it is convenient to consider $\ZZ$ -gradings, that are its coarsenings. Let us denote by $\mathfrak{h}_{ij}$ the $\ZZ$-grading given by $\deg(y_i)~=~-~k_j$, $\deg(y_j)~=~k_i$. Degrees of all other $y_n$ and degrees of all $g\in\KK[X]$ are equal to zero. 

\begin{lem}\label{edin}
Let $\partial$ be a $\TT$-homogeneous LND of $\KK[Y]$. Then there is an index $i~\in~\{1,2,\ldots,m\}$ such that for all $j\neq i $ we have $ \partial(y_j)=0$.
\end{lem}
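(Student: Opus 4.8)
The plan is to reduce the statement to a pairwise claim and then exploit the coarsening $\mathfrak{h}_{ij}$ attached to a pair of indices. Since $\partial$ is $\TT$-homogeneous and each $\mathfrak{h}_{ij}$ is a coarsening of the $\TT$-grading, Lemma~\ref{grub} shows that $\partial$ is $\mathfrak{h}_{ij}$-homogeneous, say of degree $d=d_{ij}\in\ZZ$. It suffices to prove that for every pair $i\neq j$ at least one of $\partial(y_i)$, $\partial(y_j)$ vanishes: if this holds for all pairs, then the index set $\{l:\partial(y_l)\neq 0\}$ cannot contain two elements, so there is at most one such index, which is exactly the assertion.

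The elementary tool I would use is that for an LND $\partial$ of a domain, the relation $y\mid\partial(y)$ forces $\partial(y)=0$. This follows from the degree function $\deg_\partial(a)=\max\{n:\partial^n(a)\neq 0\}$, which is additive on products and drops by one under $\partial$, so $\partial(y)=y\gamma$ with $\partial(y)\neq 0$ would give $\deg_\partial(\partial(y))=\deg_\partial(y)+\deg_\partial(\gamma)\geq\deg_\partial(y)$, contradicting $\deg_\partial(\partial(y))=\deg_\partial(y)-1$. Now fix the pair and write $\KK[Y]=\bigoplus_{n\in\ZZ}A_n$ for the $\mathfrak{h}_{ij}$-grading. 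A monomial $g\,y_1^{a_1}\cdots y_m^{a_m}$ with $g\in\KK[X]$ has $\mathfrak{h}_{ij}$-degree $-a_ik_j+a_jk_i$; a monomial of strictly positive degree must therefore satisfy $a_j\geq 1$ and so be divisible by $y_j$, while one of strictly negative degree is divisible by $y_i$. Because the defining relation is $\mathfrak{h}_{ij}$-homogeneous of degree $0$, these divisibilities pass to $\KK[Y]$: every element of $A_n$ with $n>0$ lies in $y_j\KK[Y]$, and every element of $A_n$ with $n<0$ lies in $y_i\KK[Y]$.

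To conclude, observe that $\partial(y_j)\in A_{k_i+d}$ and $\partial(y_i)\in A_{-k_j+d}$. If $k_i+d>0$ then $\partial(y_j)$ is divisible by $y_j$, whence $\partial(y_j)=0$; if $-k_j+d<0$ then likewise $\partial(y_i)=0$. Since $k_i,k_j\geq 1$, the two conditions $d>-k_i$ and $d<k_j$ cover all of $\ZZ$ (their simultaneous failure would force $k_j\leq d\leq -k_i$, which is impossible), so for every value of $d$ at least one of $\partial(y_i),\partial(y_j)$ vanishes, which settles the pairwise claim and hence the lemma. The one genuinely delicate point, and the main obstacle, is the divisibility structure of the graded pieces $A_n$, namely that positive-degree elements are forced to carry a factor of $y_j$; once this is secured the rest is formal. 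I would take care to check that reduction modulo the defining relation cannot destroy this divisibility, which is safe precisely because that relation is homogeneous of degree zero, so it respects the ideals $(y_i)$ and $(y_j)$ degreewise.
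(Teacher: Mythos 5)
Your proof is correct and takes essentially the same route as the paper's: both pass to the coarsened $\ZZ$-grading $\mathfrak{h}_{ij}$ via Lemma~\ref{grub}, observe that homogeneous elements of positive (resp.\ negative) degree are divisible by $y_j$ (resp.\ $y_i$), and invoke the principle that $y\mid\partial(y)$ forces $\partial(y)=0$ for an LND of a domain. The only cosmetic differences are that you organize the argument as a symmetric pairwise claim with the two overlapping ranges $d>-k_i$ and $d<k_j$, where the paper assumes $\partial(y_i)\neq 0$ and splits at $\deg(\partial)\leq 0$ versus $\deg(\partial)>0$, and that you prove inline the divisibility principle that the paper cites from Freudenburg.
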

\begin{proof}
If for all $j$ we have $\partial(y_j)=0$, the lemma is proven. Assume that $\partial(y_i)\neq 0$. 
Fix any $j\neq i$. Let us consider the $\ZZ$-grading $\mathfrak{h}_{ij}$ on $\KK[Y]$. By Lemma~\ref{grub}, $\partial$ is a $\ZZ$-homogeneous LND. If $\deg(\partial)\leq 0$, then $\deg(\partial(y_i))<0$, so $y_i\mid \partial(y_i)$. Therefore, $\partial(y_i)=0$, see for example~\cite[Proposition~1.9]{Fr}. This is a contradiction. Hence, $\deg(\partial)>0$. Then $\deg(\partial(y_j))>0$. We obtain $y_j\mid\partial(y_j)$, hence $\partial(y_j)=0$.
\end{proof}

Consider the field $\LL_j=\overline{\KK(y_j)}$, which is the algebraic closure of $\KK(y_j)$. If there is a fields embedding  $\KK\subset\LL$ and $Z$ is an affine algebraic variety over $\KK$, we denote by
$Z(\LL)$ the affine algebraic variety over $\LL$ given by the same equations as $Z$. Then we have 
$$Y(\LL_j)=\VV\left(y_1^{k_1}\ldots y_m^{k_m}-f\right)\subset \LL_j^{m-1}\times X(\LL_j).$$ 
Since $y_j$ is a constant now, $Y(\LL_j)$ is a $(m-1)$-suspension over $X(\LL_j)$.

Denote by $Y_j$ the affine algebraic variety over $\LL_j$ of the form 
$$\VV\left(y_1^{k_1}\ldots y_{j-1}^{k_{j-1}}y_{j+1}^{k_{j+1}}\ldots y_m^{k_m}-f\right)\subset \LL_j^{m-1}\times X(\LL_j).$$

\begin{re}
The variety $Y_j$ can be reducible.
\end{re}

\begin{lem}\label{pat}
If there is a $\TT$-homogeneous LND $\partial\colon\KK[Y]\rightarrow\KK[Y]$ such that $\partial(y_i)\neq 0$, then for each $j\neq i$ there exists LND $\partial_j\colon\LL_j[Y_j]\rightarrow\LL_j[Y_j]$ such that $\partial_j(y_i)\neq 0$. 
\end{lem}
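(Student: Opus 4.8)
The plan is to build $\partial_j$ from $\partial$ by inverting $y_j$, extending scalars to $\LL_j$, and then identifying the resulting algebra with $\LL_j[Y_j]$. First I would record what Lemma~\ref{edin} gives us: since $\partial$ is $\TT$-homogeneous with $\partial(y_i)\neq 0$, we have $\partial(y_\ell)=0$ for every $\ell\neq i$, and in particular $\partial(y_j)=0$ for the fixed index $j\neq i$. Hence the entire subalgebra $\KK[y_j]\subset\KK[Y]$ lies in $\Ker\partial$. Because every element of $S:=\KK[y_j]\setminus\{0\}$ is killed by $\partial$, the quotient rule shows $\partial$ extends uniquely to the localization $B:=S^{-1}\KK[Y]=\KK(y_j)\otimes_{\KK[y_j]}\KK[Y]$ via $\partial(a/s)=\partial(a)/s$, and the identity $\partial^n(a/s)=\partial^n(a)/s$ shows this extension is again an LND. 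Since $Y$ is assumed irreducible, $\KK[Y]$ is a domain and the localization map is injective, so $\partial(y_i)$ remains nonzero in $B$.

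Next, the extended $\partial$ is $\KK(y_j)$-linear, so it extends $\LL_j$-linearly to the derivation $\partial':=\mathrm{id}_{\LL_j}\otimes\partial$ on $\LL_j\otimes_{\KK(y_j)}B$; the formula $\partial'^{\,n}(c\otimes b)=c\otimes\partial^n(b)$ shows $\partial'$ is locally nilpotent, and $1\otimes\partial(y_i)\neq 0$. It remains to identify this algebra with $\LL_j[Y_j]$. Treating $y_j$ now as an element of $\LL_j^\times$, the algebra $\LL_j\otimes_{\KK(y_j)}B$ is $\LL_j[\dots]/(y_j^{k_j}\,m'-f)$, where $m'=y_1^{k_1}\cdots\widehat{y_j^{k_j}}\cdots y_m^{k_m}$, which differs from the defining relation of $Y_j$ only by the constant factor $y_j^{k_j}$. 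Choosing an index $\ell\neq j$ and $\beta\in\LL_j^\times$ with $\beta^{k_\ell}=y_j^{-k_j}$ (possible because $\LL_j$ is algebraically closed) and substituting $y_\ell\mapsto\beta y_\ell$ produces a ring isomorphism $\Psi$ onto $\LL_j[Y_j]$. Setting $\partial_j:=\Psi\circ\partial'\circ\Psi^{-1}$ gives the required LND. Taking $\ell\neq i$ when $m\geq 3$, or noting that $\partial_j(y_i)=\beta^{-1}\,\Psi(1\otimes\partial(y_i))\neq 0$ when we are forced to take $\ell=i$ (the case $m=2$), keeps $\partial_j(y_i)\neq 0$.

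The main care points, rather than a deep obstacle, are verifying that local nilpotency survives both the localization and the flat base change $\LL_j\otimes_{\KK(y_j)}(-)$, and handling the constant factor $y_j^{k_j}$ so that the base-changed algebra is genuinely $\LL_j[Y_j]$; this is precisely where algebraic closedness of $\LL_j$ is used to extract the needed root $\beta$. Finally note that $Y_j$ may be reducible, so $\LL_j[Y_j]$ need not be a domain, but this causes no trouble since the construction never uses it.
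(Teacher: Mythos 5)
Your proposal is correct and follows essentially the same route as the paper: apply Lemma~\ref{edin} to get $\partial(y_j)=0$, extend $\partial$ to the base change over $\LL_j$ (where $y_j$ becomes a constant), and absorb the factor $y_j^{k_j}$ by rescaling one variable by a root $\beta$ with $\beta^{k_\ell}=y_j^{-k_j}$, exactly as the paper's isomorphism $\psi\colon y_1\mapsto\varepsilon y_1$ with $\varepsilon^{k_1}=y_j^{k_j}$. Your version merely spells out details the paper leaves implicit --- that local nilpotency survives localization and flat base change, and the choice of the rescaled index $\ell\neq j$ (including the forced case $\ell=i$ when $m=2$, where the paper's uniform choice $y_1$ is a slight abuse) --- so there is nothing to correct.
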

\begin{proof}
By Lemma \ref{edin}, $\partial(y_j)=0$. So, $\partial$ gives a derivation of $Y(\LL_j)$. Consider the mapping $\psi\colon y_1\mapsto \varepsilon y_1$, where $\varepsilon\in\LL_j, \varepsilon^{k_1}=y_j^{k_j}$. Then $\psi$ is an isomorphism between $Y(\LL_j)$ and $Y_j$. So we obtain $\partial_j\colon\LL_j[Y_j]\rightarrow\LL_j[Y_j]$ such that $\partial_j(y_i)\neq 0$. 
\end{proof}

\begin{lem}\label{osn}
If $Y_i$ is a rigid variety, then for every nontrivial $\TT$-homogeneous LND $\partial\colon\KK[Y]\rightarrow\KK[Y]$ we have 

1) $\partial(y_i)\neq 0$, 

2) $\forall j\neq i, \partial(y_j)=0$, 

3) $\deg(\partial)>0$ with respect to the $\ZZ$-grading $\mathfrak{h}_{ij}$.
\end{lem}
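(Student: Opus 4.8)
The plan is to prove all three claims simultaneously by combining the previous lemmas with the rigidity hypothesis on $Y_i$. Let $\partial$ be a nontrivial $\TT$-homogeneous LND of $\KK[Y]$. By Lemma~\ref{edin}, there is a unique index, say $n$, such that $\partial(y_j)=0$ for all $j\neq n$, while $\partial(y_n)$ may or may not vanish. The key observation is to rule out the possibility $n\neq i$, and this is where rigidity of $Y_i$ enters.

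\medskip

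First I would establish claim (1), that $\partial(y_i)\neq 0$. Suppose toward a contradiction that $\partial(y_i)=0$. Since $\partial$ is nontrivial, by Lemma~\ref{edin} there is a \emph{unique} index $n\neq i$ with $\partial(y_n)\neq 0$ and $\partial(y_j)=0$ for every $j\neq n$; in particular $\partial(y_i)=0$. Now I would apply the construction preceding Lemma~\ref{pat} with the roles of the indices chosen so that $y_i$ plays the part of the ``frozen'' variable: passing to the field $\LL_i=\overline{\KK(y_i)}$, the LND $\partial$ descends to a derivation of $Y(\LL_i)$ because $\partial(y_i)=0$, and the isomorphism $\psi$ of Lemma~\ref{pat} identifies $Y(\LL_i)$ with $Y_i$. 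This produces a nontrivial LND $\partial_i\colon\LL_i[Y_i]\rightarrow\LL_i[Y_i]$ with $\partial_i(y_n)\neq 0$, contradicting the assumed rigidity of $Y_i$. Hence $\partial(y_i)\neq 0$.

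\medskip

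With claim (1) in hand, claim (2) is immediate: applying Lemma~\ref{edin} and using that the distinguished index is unique, the index $i$ with $\partial(y_i)\neq 0$ must be \emph{the} index, so $\partial(y_j)=0$ for all $j\neq i$. Finally, for claim (3), I would fix any $j\neq i$ and consider the $\ZZ$-grading $\mathfrak{h}_{ij}$, under which $\partial$ is $\ZZ$-homogeneous by Lemma~\ref{grub}. The argument is exactly the dichotomy used in the proof of Lemma~\ref{edin}: if $\deg(\partial)\leq 0$ with respect to $\mathfrak{h}_{ij}$, then since $\deg(y_i)=-k_j<0$ we would get $\deg(\partial(y_i))\leq\deg(y_i)<0$, forcing $y_i\mid\partial(y_i)$ and hence $\partial(y_i)=0$ for a nonzero LND (see \cite[Proposition~1.9]{Fr}), contradicting claim (1). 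Therefore $\deg(\partial)>0$.

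\medskip

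The main obstacle I anticipate is claim (1): correctly setting up the reduction to $Y_i$ so that rigidity of $Y_i$ can be invoked. The delicate point is that the specialization and the change of variable $\psi$ from Lemma~\ref{pat} must be arranged with $y_i$ frozen (rather than an arbitrary $y_j$), and one must check that the resulting $\partial_i$ is genuinely nonzero on $\LL_i[Y_i]$ so that it violates rigidity. Claims (2) and (3) are then routine consequences of Lemma~\ref{edin} and the degree argument already rehearsed in Lemma~\ref{edin}'s proof.
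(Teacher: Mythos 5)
Your proof is correct and takes essentially the same route as the paper's: for (1) one passes to $Y(\LL_i)$ and uses the identification with $Y_i$ from Lemma~\ref{pat} to contradict rigidity, while (2) and (3) follow from Lemma~\ref{edin} and the $\mathfrak{h}_{ij}$-degree dichotomy (if $\deg(\partial)\leq 0$ then $y_i\mid\partial(y_i)$, forcing $\partial(y_i)=0$). One cosmetic point: Lemma~\ref{edin} does not actually supply an index $n$ with $\partial(y_n)\neq 0$ (a nontrivial $\partial$ could vanish on all the $y_j$ and act only on functions coming from $X$), but this is immaterial, since the derivation descended to $Y(\LL_i)$ is nontrivial simply because $\partial$ is.
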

\begin{proof}
If $\partial(y_i)=0$, then $\partial$ gives a nontrivial derivation of $Y(\LL_i)$. Hence, $Y_i$ is not rigid. So, we have $\partial(y_i)\neq 0$. By Lemma \ref{edin} for every~$j\neq i$ we obtain $\partial(y_j)=0$.

By Lemma \ref{grub} $\partial$ is $\mathfrak{h}_{ij}$-homogeneous. If $\deg(\partial)\leq 0$, then $\deg(\partial(y_i))<0$. Hence, $\partial(y_i)$ is divisible by $y_i$. Since $\partial$ is an LND, we obtain $\partial(y_i)=0$. A contradiction. So, we have $\deg(\partial)>0$.
\end{proof}

\begin{lem}\label{ml}
Let $Y_i$ be a rigid variety. Then for each $ j\neq i$, we have $y_j\in\mathrm{ML}(Y)$.
\end{lem}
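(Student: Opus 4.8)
The plan is to show directly that every LND $\partial$ of $\KK[Y]$ annihilates $y_j$; since $\mathrm{ML}(Y)$ is the intersection of the kernels of all LNDs, this gives $y_j\in\mathrm{ML}(Y)$. The argument is organized entirely around the $\ZZ$-grading $\mathfrak{h}_{ij}$ and rests on one elementary observation. With respect to $\mathfrak{h}_{ij}$ the generator $y_j$ has degree $k_i>0$, the generator $y_i$ has degree $-k_j<0$, and every other generator (as well as all of $\KK[X]$) has degree $0$. Hence a monomial $y_1^{a_1}\cdots y_m^{a_m}$ has $\mathfrak{h}_{ij}$-degree $k_ia_j-k_ja_i$, which is strictly positive only if $a_j\ge 1$. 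Reading this off the standard $\KK[X]$-basis of $\KK[Y]$ given by the normal-form monomials, we conclude that every $\mathfrak{h}_{ij}$-homogeneous element of strictly positive degree is divisible by $y_j$ in $\KK[Y]$.

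The crux is to prove that an arbitrary nonzero LND $\partial$ strictly raises $\mathfrak{h}_{ij}$-degree: writing $\partial=\sum_{d=l}^{k}\partial_d$ for its decomposition into $\mathfrak{h}_{ij}$-homogeneous components, I claim $l>0$. By Lemma~\ref{fl}(1) the lowest component $\partial_l$ is itself a nonzero LND. It is $\mathfrak{h}_{ij}$-homogeneous but in general not $\TT$-homogeneous, so I would homogenize it fully: applying Lemma~\ref{flz} once for each of the $m-1$ coordinates of the $\TT$-grading, as in the proof of Lemma~\ref{flza}, always extracting an extreme (hence, by Lemma~\ref{fl}(1), LND) component, yields a nonzero $\TT$-homogeneous LND $\partial'$ obtained as an iterated extreme component of $\partial_l$. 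By Lemma~\ref{grub} every $\TT$-homogeneous piece of $\partial_l$ projects to the single value $l$ under the coarsening $\mathfrak{h}_{ij}$, so $\partial'$ has $\mathfrak{h}_{ij}$-degree exactly $l$. Now Lemma~\ref{osn}(3), applied to the nontrivial $\TT$-homogeneous LND $\partial'$, forces $l>0$. Since all present component degrees lie in $[l,k]$ with $l>0$, every $\mathfrak{h}_{ij}$-component of $\partial$ has positive degree.

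Combining the two parts finishes the proof. As $\deg_{\mathfrak{h}_{ij}}(y_j)=k_i$, every homogeneous summand of $\partial(y_j)=\sum_{d=l}^k\partial_d(y_j)$ has degree $d+k_i\ge l+k_i>0$, so by the opening observation $y_j$ divides $\partial(y_j)$. On the domain $\KK[Y]$ an LND satisfying $y_j\mid\partial(y_j)$ must satisfy $\partial(y_j)=0$, by the same divisibility argument used in the proof of Lemma~\ref{edin} (see \cite[Proposition~1.9]{Fr}): otherwise the $\partial$-degree function would give $\deg_\partial(\partial(y_j))=\deg_\partial(y_j)-1$, whereas $y_j\mid\partial(y_j)$ forces $\deg_\partial(\partial(y_j))\ge\deg_\partial(y_j)$. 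Hence $\partial(y_j)=0$ for every LND $\partial$, and $y_j\in\mathrm{ML}(Y)$.

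I expect the main obstacle to be the middle step: extracting from an arbitrary LND a $\TT$-homogeneous LND whose $\mathfrak{h}_{ij}$-degree equals the minimal degree $l$, so that Lemma~\ref{osn}(3) is applicable. The delicate point is that the successive homogenizations in the coordinates of the $\TT$-grading never alter the $\mathfrak{h}_{ij}$-degree, because they only pass to subcomponents of $\partial_l$, all of which share the image $l$ under the coarsening $\mathfrak{h}_{ij}$; once this is granted, both the divisibility observation and the final step are routine.
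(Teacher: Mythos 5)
Your proof is correct and follows essentially the same route as the paper's: decompose $\partial$ into $\mathfrak{h}_{ij}$-homogeneous components, note that the lowest component $\partial_l$ is an LND by Lemma~\ref{fl}(1), extract from it a nontrivial $\TT$-homogeneous LND of the same $\mathfrak{h}_{ij}$-degree $l$ (via Lemma~\ref{grub} and the coarsening), apply Lemma~\ref{osn}(3) to conclude $l>0$, and deduce $y_j\mid\partial(y_j)$, hence $\partial(y_j)=0$. The only difference is expository: you spell out the positive-degree divisibility observation and the invariance of the $\mathfrak{h}_{ij}$-degree under the iterated homogenization, both of which the paper leaves implicit.
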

\begin{proof}
Let $\partial\colon\KK[Y]\rightarrow\KK[Y]$ be an LND. Consider the $\ZZ$-grading $\mathfrak{h}_{ij}$ on~$\KK[Y]$. We have $\partial=\sum_{p=l}^k\partial_p$. By Lemma~\ref{fl} we have, that $\partial_l$ is LND. We can decompose $\partial_l$ on to the sum of $\TT$-homogeneous derivations. Among them there is a $\TT$-homogeneous LND with $\mathfrak{h}_{ij}$-degree $l$.
By Lemma~\ref{osn} (3) we have $l>0$. So, we have $\partial(y_j)=\sum_{p=l}^k\partial_p(y_j)$, where $\deg(\partial_p(y_j))=k_i+p\geq  k_i+l>0$. Therefore, $y_j\mid \partial(y_j)$. Hence,  $\partial(y_j)=0$. 
\end{proof}
\begin{lem}\label{aut}
Let $Y_i$ be a rigid variety. Let $\Lambda\subset\mathrm{Aut}(Y)$ be a subgroup isomorphic to the multiplicative group $(\KK^\times,\cdot)$. Let $\varphi$ be an element of~$\Lambda$.  Then for each $ j\neq i$ there exists a nonzero element $\lambda\in\KK\setminus\{0\}$ such that $\varphi^*(y_j)=\lambda y_j$.
\end{lem}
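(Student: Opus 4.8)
The subgroup $\Lambda\cong(\KK^\times,\cdot)$ corresponds to a semisimple derivation $\delta$ of the domain $A=\KK[Y]$, and my plan is to deduce the whole statement from Lemma~\ref{semisimp} applied to $f=y_j$. Thus everything reduces to establishing the single divisibility
\[
y_j\mid\delta(y_j)\qquad\text{for each }j\neq i,
\]
since Lemma~\ref{semisimp} then produces the required $\lambda$ and in fact shows that $y_j$ is $\Lambda$-homogeneous.

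First I would record the two available inputs. By Lemma~\ref{ml}, rigidity of $Y_i$ gives $y_j\in\mathrm{ML}(Y)$ for every $j\neq i$; in particular every LND of $\KK[Y]$ annihilates $y_j$, so the prime divisor $D_j=\VV(y_j)\subset Y$ is stable under $\SAut(Y)$. On the other hand $\mathrm{ML}(Y)$ is invariant under every automorphism of $Y$ (automorphisms conjugate LNDs to LNDs, hence preserve the intersection of their kernels), so $\Lambda$ stabilizes $\mathrm{ML}(Y)$ and, being a torus, turns it into a graded subalgebra for the $\ZZ$-grading attached to $\delta$. Consequently $\delta(y_j)\in\mathrm{ML}(Y)$, and the $\delta$-homogeneous components of $y_j$ again lie in $\mathrm{ML}(Y)$.

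The heart of the argument is to upgrade this to $y_j\mid\delta(y_j)$, equivalently to show that $\delta$ is tangent to $D_j$, which in turn is equivalent to $\Lambda$ preserving the divisor $D_j$. Here I would exploit the normality of $\SAut(Y)$ in $\Aut(Y)$: any $\varphi\in\Aut(Y)$ carries $\SAut(Y)$-stable prime divisors to $\SAut(Y)$-stable prime divisors, so $\Aut(Y)$ permutes the distinguished finite family $\{D_j\}_{j\neq i}$. Since $\Lambda\cong\KK^\times$ is connected, it must fix each $D_j$ individually; hence $\delta(y_j)\in(y_j)$, and Lemma~\ref{semisimp} yields $\varphi^*(y_j)=\lambda y_j$.

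I expect the main obstacle to be precisely this last step. The subgroup $\Lambda$ is a priori an arbitrary one-parameter multiplicative subgroup and need not lie inside the proper quasitorus $\HH$, so one cannot simply read off that $y_j$ is a $\Lambda$-eigenvector from the diagonal $\HH$-action. The delicate point is to pin down $\{D_j\}_{j\neq i}$ as a genuinely \emph{finite} and canonical set of prime divisors that $\Aut(Y)$ must permute --- characterizing them, for instance, as the $\SAut(Y)$-stable prime divisors along which $Y$ degenerates --- and to rule out that a $\Lambda$-translate moves $D_j$ within a continuous family, so that connectedness of $\Lambda$ really forces invariance. Once $D_j$ is shown to be $\Lambda$-stable, the rest is routine.
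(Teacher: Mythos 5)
Your reduction is the right one and matches the paper's endgame: produce the divisibility $y_j\mid\delta(y_j)$ for the semisimple derivation $\delta$ attached to $\Lambda$, and then invoke Lemma~\ref{semisimp}. But the route you propose to that divisibility has a genuine gap, which you yourself flag and do not close. The family of $\SAut(Y)$-stable prime divisors is not finite, so the step ``$\Aut(Y)$ permutes the finite family $\{D_j\}_{j\neq i}$, and connectedness of $\Lambda$ then fixes each $D_j$'' does not get off the ground. In the setting of Section~\ref{no} (all $k_i\geq 2$) the variety $Y$ is itself rigid, $\SAut(Y)$ is trivial, and \emph{every} prime divisor is $\SAut(Y)$-stable; and even in the setting of Section~\ref{oo}, where $\mathrm{ML}(Y)=\KK[y_2,\ldots,y_m]$ and every LND is $h\widehat{\partial}$, every divisor $\VV(h)$ with $h\in\mathrm{ML}(Y)$ irreducible (e.g.\ $\VV(y_2-c)$ for any $c\in\KK$) is $\SAut(Y)$-stable, so $\SAut$-stability does not single out the $D_j$ from a continuum of stable divisors. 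Your fallback suggestion --- characterize the $D_j$ as the stable divisors ``along which $Y$ degenerates'' --- is exactly the missing content, and nothing in your write-up supplies it or rules out that a $\Lambda$-translate moves $D_j$ inside this continuous family.

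The paper closes this step purely algebraically, with no divisor geometry. Since $\delta$ is semisimple it is locally bounded; decomposing $\delta=\sum_{p=l}^k\delta_p$ with respect to the $\ZZ$-grading $\mathfrak{h}_{ij}$, Lemma~\ref{fl}(2) says that if $l<0$ then $\delta_l$ is a homogeneous LND of negative degree, contradicting Lemma~\ref{osn}(3) (after refining to $\TT$-homogeneous pieces as in the proof of Lemma~\ref{ml}, every nontrivial homogeneous LND has positive $\mathfrak{h}_{ij}$-degree when $Y_i$ is rigid). Hence $l\geq 0$, so every homogeneous component of $\delta(y_j)$ has degree $k_i+p\geq k_i>0$, and any element of positive $\mathfrak{h}_{ij}$-degree is divisible by $y_j$ (a monomial $y_i^ay_j^b\cdot(\text{degree-zero part})$ has degree $-k_ja+k_ib>0$ only if $b\geq 1$). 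This gives $y_j\mid\delta(y_j)$ directly, and Lemma~\ref{semisimp} finishes exactly as you intended; your intermediate observation that $\delta(y_j)\in\mathrm{ML}(Y)$ is not needed. If you want to salvage the geometric approach you would have to prove $\Lambda$-invariance of each $D_j$ by some independent mechanism; the graded argument above is the one the hypotheses of the lemma actually support.
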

\begin{proof}
Let $\delta$ be the semisimple derivation corresponding to $\Lambda$. Then with respect to $\mathfrak{h}_{ij}$, we have $\delta=\sum_{p=l}^k\delta_p$. By Lemma \ref{fl} if $l<0$, then $\delta_l$ is an LND. This contradicts to Lemma~\ref{osn}. Therefore, $l\geq 0$. Hence, for all $p=l,\ldots,k$ we have $\deg(\delta_p(y_j))>0$. Therefore, $y_j\mid \delta (y_j)$. By Lemma~\ref{semisimp} $\varphi^*(y_j)=\lambda y_j$ for some $\lambda\in\KK\setminus\{0\}$.
\end{proof}

\section{$m$-suspensions  over a  line}

In this section we investigate $m$-suspensions  over a line. The results of this section we use in the next two section to describe automorphism group of an $m$-suspension over a line, when all weights of the $m$-suspension are greater then one and when the unique weight equals one and all the others are greater than one. The second case is a particular case of Danielewski varieties.

Let $Y=\VV(y_1^{k_1}\ldots y_m^{k_m}-P(z))$, $d=\deg P\geq 2$. We can do a linear substitution on $z$ to make $P$ be a monic polynomial with zero coefficient of $z^{d-1}$. In further we assume $P$ to be so. If $P(z)=z^d$, then $Y$ is toric (may be not normal).

\begin{lem}\label{ir}
The variety $\VV(y_1^{k_1}y_2^{k_2}\ldots y_m^{k_m}-P(z))$, $k_i\in\ZZ_{>0}$, is reducible if and only if there is $l>1$ such, that for all $i$ we have $l\mid k_i$ and there exists $Q(z)\in\KK[z]$ such that $P(z)=Q(z)^l$.
\end{lem}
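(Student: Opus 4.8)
The plan is to reduce the factorization of $F=y_1^{k_1}\cdots y_m^{k_m}-P(z)$ to that of a one-variable binomial over the field $\KK(z)$. Set $L=\gcd(k_1,\ldots,k_m)$ and write $k_i=Lk_i'$, so that $\gcd(k_1',\ldots,k_m')=1$, $F=N^L-P(z)$ with $N=y_1^{k_1'}\cdots y_m^{k_m'}$, and the condition ``$l\mid k_i$ for all $i$'' becomes simply ``$l\mid L$''. The implication ``$\Leftarrow$'' is the easy one: if $l>1$ divides $L$ and $P=Q^l$ with $Q\in\KK[z]$, then, since $\KK$ contains all $l$-th roots of unity,
$$F=\bigl(N^{L/l}\bigr)^l-Q^l=\prod_{\zeta^l=1}\bigl(N^{L/l}-\zeta Q\bigr).$$
Comparing the leading monomial $N^{L/l}$ shows that the factors for $\zeta=1$ and for a fixed $\zeta\neq1$ are nonconstant and non-associate, so $F$ has two coprime non-unit factors and $\VV(F)$ is reducible.

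For ``$\Rightarrow$'' I would proceed in two reductions. First, view $F$ as an element of $\KK[z][y_1,\ldots,y_m]$; its coefficient at the monomial $N^L$ is $1$, so $F$ is primitive over the UFD $\KK[z]$. By Gauss's lemma the irreducible factorization of $F$ in $A=\KK[y_1,\ldots,y_m,z]$ agrees, up to units, with its factorization in $\KK(z)[y_1,\ldots,y_m]$, and no factor lies in $\KK[z]$. Since the relevant binomial below is separable, $F$ is squarefree, and hence $\VV(F)$ is reducible exactly when $F=N^L-P(z)$ factors nontrivially in $\KK(z)[y_1,\ldots,y_m]$.

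Second, I would remove the several variables. Because $\gcd(k_1',\ldots,k_m')=1$, the vector $(k_1',\ldots,k_m')$ completes to a matrix of $\GL_m(\ZZ)$, whose associated monomial automorphism of the Laurent ring $\KK(z)[y_1^{\pm1},\ldots,y_m^{\pm1}]$ sends $y_1$ to $N$. An irreducible factor $p(x)$ of $x^L-P(z)$ satisfies $p(0)\neq0$, so $p(N)$ has a nonzero constant term, is divisible by no $y_i$, and is irreducible in $\KK(z)[y_1,\ldots,y_m]$ precisely when $p(y_1)$ is irreducible in the Laurent ring, i.e.\ precisely when $p$ is irreducible in $\KK(z)[x]$. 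Substituting $x=N$ into the factorization of $x^L-P(z)$ therefore transports it, factor by factor and preserving distinctness, to that of $N^L-P(z)$. Thus $F$ factors nontrivially if and only if $x^L-P(z)$ is reducible over $\KK(z)$. I expect this second reduction to be the main obstacle: one must justify with care that the monomial $N$ behaves like a single variable, which is exactly where $\gcd(k_i')=1$ is used.

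It remains to decide reducibility of $x^L-P(z)$ over $k=\KK(z)$. As $\KK$ is algebraically closed, $k$ contains all roots of unity, so the Vahlen--Capelli criterion says that $x^L-P(z)$ is irreducible over $k$ if and only if $P(z)$ is not a $p$-th power in $k$ for every prime $p\mid L$ (the exceptional case $4\mid L$ is vacuous, since $i\in k$ gives $-4\in k^4$). Finally, comparing prime factorizations in the UFD $\KK[z]$ shows that a monic $P$ is a $p$-th power in $\KK(z)$ if and only if $P=Q^p$ for some $Q\in\KK[z]$. Assembling the reductions, $\VV(F)$ is reducible if and only if $P=Q^p$ for some prime $p\mid L$, equivalently if and only if there is $l>1$ with $l\mid k_i$ for all $i$ and $P=Q^l$.
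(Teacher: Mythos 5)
Your proof is correct, but it follows a genuinely different route from the paper's. The paper argues by direct computation: it passes to the algebraic closure $\KK_1$ of $\KK(y_2,\ldots,y_m,z)$, splits the defining polynomial into linear factors $\prod_i(y_1-\alpha_i)$ with $\alpha_i^{k_1}=P(z)/(y_2^{k_2}\cdots y_m^{k_m})$, takes a hypothetical factor built from $s<k_1$ of the $\alpha_i$, and extracts both conclusions by divisibility bookkeeping: integrality of the exponents $r_i-\frac{sk_i}{k_1}$ forces $l=k_1/\gcd(k_1,s)$ to divide every $k_i$, and specializing $y_1=0$ gives $f(0,y_2,\ldots,y_m,z)^{k_1}=P(z)^s$, whence $P=Q^l$. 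You instead modularize: Gauss's lemma moves the question to $\KK(z)[y_1,\ldots,y_m]$; completing the primitive vector $(k_1',\ldots,k_m')$ to a matrix of $\GL_m(\ZZ)$ yields a monomial automorphism of the Laurent ring identifying $N$ with a single variable, and your observation that $p(0)\neq 0$ (so no factor is divisible by any $y_i$) is precisely what lets you pass back and forth between the polynomial and Laurent rings -- this is the one step whose details (units of the Laurent ring over $\KK(z)$ are scalars times monomials) should be written out, but they are standard and your sketch handles them correctly; finally Vahlen--Capelli settles $x^L-P$ over $\KK(z)$, with the exceptional $-4k^4$ case rightly dismissed since $i\in\KK$ gives $-4=(1+i)^4$. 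Each approach buys something. The paper's computation is self-contained (no appeal to the binomial irreducibility theorem) and, importantly for the sequel, produces the explicit equations $\VV\bigl(y_1^{k_1/l}\cdots y_m^{k_m/l}-\varepsilon Q(z)\bigr)$ of the irreducible components, which are reused in Lemmas~\ref{inv}, \ref{redu} and~\ref{r}. Your reduction is conceptually cleaner and more portable, and it is actually more careful on two points the paper leaves implicit: you prove the easy direction explicitly (the product over $\zeta^l=1$, with non-associate factors), and your separability/squarefreeness remark bridges reducibility of the polynomial and reducibility of the variety, whereas the paper's argument, starting from a factorization $F=fg$, strictly speaking addresses reducibility of $F$ as a polynomial.
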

\begin{proof}
Denote $d=\deg P(z)$. Suppose
$$y_1^{k_1}y_2^{k_2}\ldots y_m^{k_m}-P(z)=f(y_1,\ldots, y_m,z)g(y_1,\ldots, y_m,z),$$ 
where $f,g\neq const$. Then both $f$ and $g$ depend on $y_1$.  Indeed, if $f$ does not depend on $y_1$, then 
$$
f=f(0, y_2,\ldots, y_m,z)\mid (0y_2^{k_2}\ldots y_m^{k_m}-P(z))=P(z).
$$ 
Hence, $f=f(z)$. Therefore, $f\nmid (y_1^{k_1}y_2^{k_2}\ldots y_m^{k_m}-P(z))$. This gives a contradiction. 

Consider 
$\KK_1=\overline{\KK(y_2, \ldots, y_m, z)}$ the algebraic closure of the field $\KK(y_2, \ldots, y_m, z)$. We have 
\begin{equation}\label{snoska}
y_1^{k_1}y_2^{k_2}\ldots y_m^{k_m}-P(z)=y_2^{k_2}\ldots  y_m^{k_m}\prod_{i=1}^{k_1} (y_1-\alpha_i),
\end{equation}
where $\alpha_i\in\KK_1$ and
$$\alpha_i^{k_1}=\frac{P(z)}{y_2^{k_2}\ldots  y_m^{k_m}}.$$
Let
$$
f=h(y_2, \ldots, y_m, z) ((y_1-\alpha_{i_1})\ldots(y_1-\alpha_{i_s})), \  s<k_1.
$$
Since $f$ is a polynomial, we obtain, that $h$ is a polynomial in $y_2, \ldots, y_m, z$. Therefore, 
$$h\mid y_2^{k_2}\ldots y_m^{k_m} .$$
Hence, up to a constant, the polynomial $h$ is equal to $y_2^{r_2}\ldots y_m^{r_m}$, $r_i\leq k_i$. Denote 
$$l=\frac{k_1}{gcd (k_1,s)}.$$ 
Then $l\mid k_1$. 
The minimal degree of monomials of $f$ in $y_i$, $i\geq 2$ is 
$$\deg_{y_i}(y_2^{r_2}\ldots y_m^{r_m}\alpha_1\ldots \alpha_s)=r_i-\frac{sk_i}{k_1}\in\ZZ.$$ 
Therefore, $l\mid k_i$.

Denote $\varepsilon\in\KK$, $\varepsilon^{k_1}=1$. We can assume that $\alpha_{i+1}=\varepsilon\alpha_i$. Then putting $y_1=0$ in (\ref{snoska}) we obtain
$$
P(z)=-y_2^{k_2}\ldots  y_m^{k_m}\prod_{i=1}^{k_1} (-\alpha_i)=c y_2^{k_2}\ldots  y_m^{k_m}\alpha_1^{k_1}
$$
for some $c\in\KK$. Analogically
$$
f(0,y_2\ldots,y_m,z)=y_2^{r_2}\ldots y_m^{r_m}(-\alpha_{i_1})\ldots(-\alpha_{i_s})=b y_2^{r_2}\ldots y_m^{r_m}\alpha_1^{s}
$$
for some $b\in\KK$.

We can multiply $f$ to a constant to obtain 
$$
\left(f(0,y_2\ldots,y_m,z)\right)^{k_1}=(y_2^{r_2k_1-k_2s}\ldots y_m^{r_mk_1-k_ms})(P(z))^s.
$$
Since $f$ is a polynomial, we have $r_ik_1-k_is\geq 0$. But if $r_ik_1-k_is>0$, then $y_i\mid f(0,y_2\ldots,y_m,z)$. Hence, 
$$y_i\mid f(0,y_2\ldots,y_m,z)g(0,y_2\ldots,y_m,z)=-P(z).$$ 
A contradiction. So, $r_ik_1-k_is = 0$. We have
$$
\left(f(0,y_2\ldots,y_m,z)\right)^{k_1}=(P(z))^s.
$$

Therefore, there exists a polynomial $Q(z)$ such that 

$$
\left(f(0,y_2\ldots,y_m,z)\right)^{k_1}=(P(z))^s=Q(z)^{\mathrm{lcm}(k_1,s)}.
$$

Then $P(z)=Q^l$.
\end{proof}

\begin{lem}\label{inv}
Let  $Y=\VV(y_1^{k_1}\ldots y_m^{k_m}-P(z))$, $d=\deg P\geq 2$. Then $Y$ does not admit any nonconstant invertible functions.
\end{lem}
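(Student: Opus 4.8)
The plan is to show $\KK[Y]^{\times}=\KK^{\times}$ by degenerating $Y$ to the toric variety obtained when $P(z)=z^{d}$ and exploiting that the latter has no nonconstant units. Concretely, I would put a positive weight grading on the ambient polynomial ring by declaring $\deg y_{i}=d$ for all $i$ and $\deg z=k_{1}+\dots+k_{m}$, so that the monomial $M:=y_{1}^{k_{1}}\cdots y_{m}^{k_{m}}$ and the leading monomial $z^{d}$ of $P$ acquire the same weight $d(k_{1}+\dots+k_{m})$, while every lower power $z^{j}$ with $j<d$ has strictly smaller weight. This weighting induces a filtration on $\KK[Y]$, and since the defining ideal is principal, its initial ideal is generated by the leading form $M-z^{d}$ of the single relation $M-P(z)$. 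Hence the associated graded algebra is $A:=\mathrm{gr}(\KK[Y])\cong\KK[y_{1},\dots,y_{m},z]/(M-z^{d})$, which is $\ZZ_{\ge 0}$-graded with $A_{0}=\KK$ because all the generators have positive weight.

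Next I would transfer the statement from $A$ back to $\KK[Y]$. If $u\in\KK[Y]^{\times}$ has inverse $v$, then their leading forms satisfy $\mathrm{gr}(u)\,\mathrm{gr}(v)=\mathrm{gr}(uv)=\mathrm{gr}(1)=1$, provided the product of leading forms does not drop weight; when $A$ is a domain this is automatic, and the resulting equality $\deg\mathrm{gr}(u)+\deg\mathrm{gr}(v)=0$ with both summands nonnegative forces $\deg u=\deg v=0$, i.e.\ $u\in F_{0}=\KK$. It then remains to check that $A$ itself has only scalar units. Since the relation $M-z^{d}$ is squarefree, $A$ is reduced, and a $\ZZ_{\ge 0}$-graded domain whose degree-zero part is $\KK$ has only scalar units by the standard top-degree argument (the leading forms of $u$ and of $u^{-1}$ multiply to a unit of degree $0$). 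This settles the case in which $A$, equivalently $Y$, is irreducible.

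The main obstacle is precisely the no-cancellation step, which can fail when $A$ is not a domain, that is, by Lemma~\ref{ir}, exactly when $l:=\gcd(k_{1},\dots,k_{m},d)>1$. In that situation I would argue directly on $Y$: writing $P=Q^{l}$ and $M=N^{l}$ with $N=y_{1}^{k_{1}/l}\cdots y_{m}^{k_{m}/l}$, Lemma~\ref{ir} factors $M-P=\prod_{\zeta^{l}=1}(N-\zeta Q)$ into distinct irreducible factors, so $\KK[Y]$ is reduced and its irreducible components are the $\VV(N-\zeta Q)$, each of the same shape but with the corresponding $\gcd$ equal to $1$, hence a domain. A unit of $\KK[Y]$ restricts to a scalar on every component by the irreducible case; since all components pass through the finitely many points $(0,\dots,0,z_{0})$ with $Q(z_{0})=0$, these scalars coincide, and as $Y$ is reduced the unit equals that single scalar globally. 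I expect the bookkeeping in this degeneration, namely the well-definedness of the filtration degree and the no-cancellation claim for leading forms, to be the only delicate points; everything else is routine.
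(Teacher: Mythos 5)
Your degeneration to $A=\KK[y_1,\ldots,y_m,z]/(M-z^d)$ is set up correctly: the ideal is principal, so the initial ideal is generated by the initial form of $M-P(z)$, and a $\ZZ_{\geq0}$-graded domain with degree-zero part $\KK$ has only scalar units. But the case split rests on a false equivalence. By Lemma~\ref{ir}, $A$ is a domain if and only if $\gcd(k_1,\ldots,k_m,d)=1$ (you state this correctly), whereas $Y$ is irreducible if and only if there is no $l>1$ dividing every $k_i$ with $P=Q^l$ -- and these two conditions are \emph{not} equivalent, contrary to your ``equivalently $Y$''. For $m=2$, $k_1=k_2=2$, $P=z^2+1$, the variety $Y$ is irreducible (since $z^2+1$ is not a square), yet $A=\KK[y_1,y_2,z]/(y_1^2y_2^2-z^2)$ has zero divisors. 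In this middle case -- $\gcd(k_1,\ldots,k_m,d)>1$ but $P$ not a proper power -- your no-cancellation step fails, and your fallback never starts, because $\gcd>1$ does not entitle you to write $P=Q^l$. The same conflation recurs inside the fallback: with $l$ maximal the factors $N-\zeta Q$ are indeed irreducible, but the components need not have ``the corresponding $\gcd$ equal to $1$'' (take $k=(4,4)$, $P=(z^2+1)^2$, $l=2$: the components are $\VV(y_1^2y_2^2\mp(z^2+1))$, with $\gcd$ equal to $2$), so reducing the components to your first case is also unjustified -- they are domains, but your graded argument needs more than that.

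The gap cannot be closed, because in the uncovered case the cancellation you flagged as delicate actually occurs and the statement itself fails: on $Y=\VV(y_1^2y_2^2-z^2-1)$ the function $u=y_1y_2-z$ is a nonconstant unit with inverse $v=y_1y_2+z$ (indeed $uv=y_1^2y_2^2-z^2=1$ on $Y$), and $\mathrm{gr}(u)\,\mathrm{gr}(v)=\overline{y_1}^2\overline{y_2}^2-\overline{z}^2=0$ in $A$ -- exactly the degree drop your argument must exclude. Already for $m=1$ one has $\VV(y^2-z^2-1)\cong\KK\setminus\{0\}$, as the paper itself records in the proof of Lemma~\ref{cur}. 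For comparison, the paper proves the irreducible case by noting that a unit must be a $\TT$-semi-invariant and then splitting into the cases ``$f=f(z)$'' and ``$y_i\mid f$''; that dichotomy overlooks the weight-zero subalgebra $\KK[z,N]$ with $N=y_1^{k_1/g}\cdots y_m^{k_m/g}$, $g=\gcd(k_1,\ldots,k_m)$, which is precisely where the unit $y_1y_2-z$ lives -- so the printed proof has the same blind spot that your filtration makes visible, and the reducible case (restriction to components plus gluing at the common points $(0,\ldots,0,z_0)$ with $Q(z_0)=0$, which your proposal reproduces) inherits the same dependence, since the components may again be middle-case varieties. What your argument does establish, completely and arguably more transparently than the semi-invariant dichotomy, is the lemma under the additional hypothesis $\gcd(k_1,\ldots,k_m,d)=1$; without some such strengthening (e.g.\ excluding $P$ of the form $R(z)^p+c$ with $p>1$ dividing every $k_i$ and $c\neq0$) no argument can succeed.
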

\begin{proof}
Firstly let $Y$ be irreducible.
Since $1$ is a $\TT$-invariant function, if $fg=1$, then $f$ and $g$ are $\TT$-semi-invariants. There are two cases:

1) $f=f(z)$. In this case if $f$ is invertible in $\KK[Y]$, then it is invertible in $\KK[z]$. That is $f$ is a constant.

2) There is $i$ such that $y_i\mid f$. We obtain, that $y_i$ is invertible. But $y_i$ admits zero volume on $Y$. 

If $Y=Y(1)\cup\ldots\cup Y(l)$ is reducible, then each irreducible component $Y(j)$ has the form  
$$
\VV\left(y_1^{\frac{k_1}{l}}\ldots y_m^{\frac{k_m}{l}}-\varepsilon^i Q(z)\right),
$$
where $\varepsilon^{l}=1$, see Lemma~\ref{ir}.

Every invertible function $f$ on $Y$ is invertible on each $Y(j)$. Therefore $f\mid_{Y(j)}=const$. Since all $Y(j)$ have a common point, $f$ is a constant.
\end{proof}

\begin{lem}\label{redu}
Let $Y=\VV(y^{k}-P(z))$ be reducible. Then $Y$ is rigid.
\end{lem}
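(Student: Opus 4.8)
The plan is to analyze the structure of a reducible variety $Y=\VV(y^k-P(z))$ directly using the decomposition into irreducible components provided by Lemma~\ref{ir}. By that lemma, reducibility means there is an integer $l>1$ with $l\mid k$ and a polynomial $Q(z)\in\KK[z]$ such that $P(z)=Q(z)^l$. Writing $k=le$, the defining equation factors as $y^{le}-Q(z)^l=\prod_{\zeta^l=1}\left(y^e-\zeta Q(z)\right)$, so the irreducible components are the varieties $Y(\zeta)=\VV\left(y^e-\zeta Q(z)\right)$ for each $l$-th root of unity $\zeta$. The strategy is to show that \emph{no} nonzero LND can exist on $\KK[Y]$ by studying how any LND must act on the components and on the coordinate functions.

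First I would recall that a variety is rigid if and only if $\KK[Y]$ admits no nonzero LND, so it suffices to show every LND $\partial\colon\KK[Y]\to\KK[Y]$ is zero. The key structural observation is that an LND, being the tangent vector of a $(\KK,+)$-action, induces a connected group action; since $\KK_{+}$ is connected, it must fix each irreducible component of $Y$ setwise, and hence restricts to an LND on each component's coordinate ring $\KK[Y(\zeta)]$. I would therefore reduce to analyzing $\partial$ on a single component $Y(\zeta)=\VV(y^e-\zeta Q(z))$. Along the way I must also handle the function $y$, which is a common coordinate on all components and whose image under $\partial$ is constrained on each.

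The core computation is to show $\partial(y)=0$ and $\partial(z)=0$ on each component, forcing $\partial=0$. The grading tools from Section~\ref{tre} are the natural instrument: the $\TT$-action here is one-dimensional (the proper quasitorus for a single $y$ with one factor), giving a $\ZZ$-grading in which $\deg(y)$ is positive and $\deg$ of everything coming from $\KK[z]$ is zero. By Lemma~\ref{flz} it suffices to treat a nonzero homogeneous LND. For such a $\partial$, a degree argument analogous to the proof of Lemma~\ref{edin} shows that either $\deg(\partial)\le 0$, in which case $\deg(\partial(y))\le 0$ forces $y\mid\partial(y)$ and hence $\partial(y)=0$ by \cite[Proposition~1.9]{Fr}; or $\deg(\partial)>0$, which I would rule out by examining the action on $z$ together with the relation $y^e=\zeta Q(z)$. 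Differentiating this relation gives $e\,y^{e-1}\partial(y)=\zeta Q'(z)\partial(z)$, which tightly couples $\partial(y)$ and $\partial(z)$ and, combined with the homogeneity and the fact that $\deg P\ge 2$, forces both to vanish.

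The main obstacle I anticipate is the case $e=1$, that is when $k=l$ and the monomial $y^{k}$ is a pure power equal to $P(z)=Q(z)^k$. Here each component is $\VV(y-\zeta Q(z))$, which is isomorphic to the affine line $\KK$ and is manifestly \emph{not} rigid on its own, so the argument cannot proceed component-by-component in isolation. The resolution must use that $\partial$ acts compatibly on the whole of $\KK[Y]$, respecting the common coordinate $y$ across all $l\ge 2$ distinct components simultaneously: an LND that is nonzero on one line-component would force incompatible values of $\partial(y)$ when matched against the other components through the shared function $y$ and the global relation, yielding a contradiction. Making this cross-component matching precise — essentially that the normalization of $Y$ is a disjoint union of lines while $Y$ itself is connected through the singular locus $\{y=Q(z)=0\}$ where the components meet — is the delicate step, and I would argue that any LND must vanish at these intersection points and hence globally.
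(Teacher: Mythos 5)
Your proposal fails at its central computation, because the grading you plan to use does not exist. For $m=1$ the torus $\TT$ of Section~\ref{tre} is $(m-1)$-dimensional, i.e.\ trivial, and the proper quasitorus of $Y=\VV(y^k-P(z))$ is only the finite group $\ZZ_k$; the gradings $\mathfrak{h}_{ij}$ require two variables $y_i,y_j$ carrying degrees of opposite sign. A $\ZZ$-grading on $\KK[Y(\zeta)]$ with $\deg(y)>0$ and $\deg=0$ on $\KK[z]$ is impossible: the relation $y^e-\zeta Q(z)$ would have a positive-degree term $y^e$ and a degree-zero term $Q(z)$, so it is not homogeneous and does not cut out a graded quotient. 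Hence the Lemma~\ref{flz}/Lemma~\ref{edin}-style degree argument that is supposed to yield $\partial(y)=\partial(z)=0$ cannot even begin; this is exactly why the paper proves rigidity of irreducible components of such curves by a genus computation via Riemann--Hurwitz (Lemma~\ref{cur}) rather than by gradings. Your case analysis is also off: it is not only $e=1$ that produces line components. If $\deg Q=1$ (e.g.\ $Y=\VV(y^4-z^2)$, so $e=2$, $Q=z$), each factor $\VV(y^e-\zeta Q)$ is a graph over $y$, hence isomorphic to $\KK$ and not rigid individually, so the cross-component argument you defer to a special case is in fact the crux of the proof in general. (A further, smaller, gap: the factors $y^e-\zeta Q$ need not be irreducible; the paper first extracts a maximal root, $P=R^r$ with $r$ maximal, and takes $l=\gcd(k,r)$ to get the actual irreducible components.)

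The correct endgame is the one you gesture at in your final paragraph but leave unproven, and it needs neither normalization nor any ``delicate matching.'' Since $(\KK,+)$ is connected, every irreducible component is invariant; if the action is nontrivial it is nontrivial on some component, and an irreducible affine curve with a nontrivial $(\KK,+)$-action must be isomorphic to $\KK$ --- by the paper's description all components are isomorphic, so then all are lines. Distinct components meet exactly in the finite set $\{y=0,\ P(z)=0\}$, which is nonempty because $\deg P\geq 2$, and this set is invariant, hence pointwise fixed (a connected group cannot move a finite invariant set). But a nontrivial $(\KK,+)$-action on a line is a translation in a suitable coordinate and has no fixed points --- contradiction, so the action is trivial. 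As written, your proposal's main mechanism is unsound and its sound idea appears only as an unexecuted remark, so the proof is incomplete.
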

\begin{proof}

Suppose $Y$ admits a nontrivial $(\KK,+)$-action. Since $(\KK,+)$ is a connected group, every irreducible component of $Y$ is $(\KK,+)$-invariant. 

Let $r$ be the maximal integer number admitting $R(z)\in\KK[z]$ such that $R(z)^r=P(z)$.  

Each  irreducible component of $Y$ has the following form
$$
\VV\left(y^{\frac{k}{l}}-\varepsilon(R(z))^{\frac{r}{l}}\right),\qquad \text{where} \ l=\gcd(k,r), \varepsilon^l=1.
$$
This implies that all irreducible components of $Y$ are isomorphic. Since $Y$ is not rigid, they are isomorphic to $\KK$.

But all the components have finite numbers of common points $(y,z)$, where $y~=~0$ and $P(z)~=~0$. Therefore, these points are $(\KK,+)$-stable. Since every irreducible component is isomorphic to $\KK$ and contains a $(\KK,+)$-stable point, $(\KK,+)$-action is trivial.
\end{proof}

\begin{lem}\label{cur}
Let $Y$be a curve of the form $\VV(y^k-P(z))$, $d=\deg P(z)\geq 2$, $k\geq 2$. Then $Y$ is rigid.
\end{lem}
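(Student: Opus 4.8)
The plan is to reduce to the irreducible case and then to rule out $Y\cong\AA^1$. If $Y$ is reducible, Lemma~\ref{redu} already gives rigidity, so from now on I would assume $Y$ irreducible; by Lemma~\ref{ir} this means $y^k-P(z)$ is irreducible in $\KK[y,z]$.

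The key reduction is the classical fact that an irreducible affine curve carrying a nonzero LND is isomorphic to $\AA^1$, which I would prove as follows. Let $\partial\neq 0$ be an LND of $A=\KK[Y]$. Since $\dim Y=1$, the subalgebra $\Ker\partial$ has transcendence degree $0$ over $\KK$ (see e.g. \cite{Fr}); as $\KK$ is algebraically closed and $A$ is a domain, this forces $\Ker\partial=\KK$. By local nilpotence there is $b\in A$ with $\partial b\neq 0$ and $\partial^2 b=0$, so $\partial b\in\Ker\partial=\KK\setminus\{0\}$; rescaling, $s=b/\partial b$ is a slice. The slice theorem then yields $A=(\Ker\partial)[s]=\KK[s]$, a polynomial ring, whence $Y\cong\AA^1$. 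Thus it suffices to show that $Y=\VV(y^k-P(z))$ with $k\geq2$ and $\deg P\geq2$ is never isomorphic to $\AA^1$.

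To rule out $Y\cong\AA^1$ I would first note that $\KK[Y]\cong\KK[s]$ is regular, so $Y$ would have to be smooth; inspecting the singular locus (where $ky^{k-1}=P'(z)=y^k-P(z)=0$) shows smoothness holds exactly when $P$ is squarefree. Hence I may assume $P$ squarefree and then study the degree-$k$ morphism $z\colon\bar Y\to\PP^1$ from the smooth projective model $\bar Y$ of $\KK(Y)=\KK(z)[y]/(y^k-P(z))$. Over each of the $d$ simple roots of $P$ the cover is totally ramified (one point of ramification index $k$), and over $z=\infty$ there are exactly $\gcd(k,d)$ points. A Riemann--Hurwitz computation then gives genus $g=\tfrac12\big((d-1)(k-1)+1-\gcd(k,d)\big)$, while the points at infinity of the affine curve $Y$ are precisely $z^{-1}(\infty)$, of which there are $\gcd(k,d)$. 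Since $\AA^1$ has genus $0$ and a single point at infinity, $Y\cong\AA^1$ would force $\gcd(k,d)=1$ and $g=0$ simultaneously, i.e. $(d-1)(k-1)=0$, contradicting $k,d\geq2$. This contradiction shows $Y\not\cong\AA^1$ and completes the proof.

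The main obstacle is this last step, the smooth case: it is exactly where the hypotheses $k\geq2$ and $\deg P\geq2$ enter, and it requires a genuine global invariant rather than a local divisibility argument. The Riemann--Hurwitz count is the cleanest route; alternatively, in the squarefree case one can argue entirely inside $\KK[s]$. A parametrization $y(s)^k=\prod_i(z(s)-a_i)$ with the $a_i$ distinct forces each of the pairwise coprime factors $z(s)-a_i$ to be a constant times a $k$-th power, and comparing two of them (possible since $d\geq2$) yields nonconstant coprime polynomials $u_1,u_2$ with $c_1u_1^k-c_2u_2^k$ a nonzero constant, which is impossible for $k\geq2$ by the Mason--Stothers theorem.
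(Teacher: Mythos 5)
Your proof is correct and follows essentially the same route as the paper: the reducible case is delegated to Lemma~\ref{redu}, a multiple root of $P$ yields a singular point, and the smooth case is settled by the identical Riemann--Hurwitz computation for the degree-$k$ cover $z\colon\overline{Y}\rightarrow\PP^1$, giving the same genus formula. The only (harmless, in fact slightly cleaner) deviation is at the end: you additionally use the count $\gcd(k,d)$ of points at infinity to force $\gcd(k,d)=1$ and hence $(d-1)(k-1)=0$ uniformly, whereas the paper analyzes $g=0$ case by case and handles $d=k=2$ separately via $\VV(y^2-z^2-1)\cong\KK\setminus\{0\}$; you also spell out, via kernel and slice, the standard fact that an irreducible affine curve admitting a nonzero LND is isomorphic to a line, which the paper invokes without proof.
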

\begin{proof}
If $Y$ is reducible, then it is rigid by Lemma~\ref{redu}.

Let $Y$ be irreducible. Irreducible curve is rigid if and only if it is not isomorphic to a line. If $P(z)$ has a multiple root, then $Y$ has a singular point and it is not isomorphic to a line. If $P(z)=a(z-z_1)\ldots(z-z_d)$ has not any multiple roots, we can compute the genus of $Y$ by Riemann-Hurwitz formula. Let us consider 
$\pi\colon \overline{Y}\rightarrow \overline{\KK}$, $\pi((y,z))=z$. We have one ramification point in the preimage of each $z_i$. The ramification index in each such point equals $k$. In the preimage of $\infty$ there are $\gcd(k,d)$ ramification points. The ramification index at each of them is equal to $\frac{k}{\gcd(k,d)}$. Therefore,
$$
(2g-2)=k(0-2)+d(k-1)+\gcd(k,d)\left(\frac{k}{\gcd(k,d)}-1\right).
$$
We obtain, 
$$g=\frac{(d-2)(k-1)}{2}+\frac{k-\gcd(k,d)}{2}=\frac{(d-1)(k-1)}{2}+\frac{1-\gcd(k,d)}{2}.$$
Assume $Y\cong \KK$. Then $g=0$. 
Since $\frac{k-\gcd(k,d)}{2}\geq 0$, if $k>1$ and $d>2$, then $g>0$. But since $g(k,d)=g(d,k)$, if  $d>1$ and $k>2$, then $g>0$.
We have $k\geq 2$ and $d\geq 2$. Therefore, there is only one opportunity $d=k=2$. But then 
$$Y\cong\VV(y^2-z^2-1)\cong\KK\setminus\{0\}\ncong\KK.$$

\end{proof}

\begin{lem}\label{r}
Let $Y=\VV(y_1^{k_1}\ldots y_m^{k_m}-P(z))$, $d=\deg P(z)\geq 2$. Assume that for all $i$ we have $k_i\geq 2$. Then $Y$ is rigid.
\end{lem}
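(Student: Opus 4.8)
The plan is to prove rigidity by induction on the number $m$ of factors $y_i$, using the effective action of the $(m-1)$-dimensional torus $\TT$ on $Y$ introduced in Section~\ref{tre}. Since this action corresponds to a $\ZZ^{m-1}$-grading of $\KK[Y]$, Lemma~\ref{flza} reduces the problem to showing that $\KK[Y]$ admits no nonzero $\TT$-homogeneous LND. All the lemmas of the preceding sections are valid over an arbitrary algebraically closed field of characteristic zero, and this is what will let me feed the inductive hypothesis into the auxiliary varieties $Y_i$, even though these are defined over the larger field $\LL_i=\overline{\KK(y_i)}$.

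For the base case $m=1$ the variety is the curve $\VV(y_1^{k_1}-P(z))$ with $k_1\geq 2$ and $\deg P\geq 2$, which is rigid by Lemma~\ref{cur} (this already covers the reducible situation, through Lemma~\ref{redu}). For the inductive step, assume $m\geq 2$ and that the statement holds for all $(m-1)$-suspensions over a line. I first observe that for each index $i$ the variety
$$Y_i=\VV\left(y_1^{k_1}\ldots y_{i-1}^{k_{i-1}}y_{i+1}^{k_{i+1}}\ldots y_m^{k_m}-P(z)\right)$$
over $\LL_i$ is exactly an $(m-1)$-suspension over the line $X(\LL_i)$ whose weights are the $k_j$ with $j\neq i$ (all $\geq 2$) and whose defining polynomial $P$ still has degree $d\geq 2$. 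Hence the inductive hypothesis applies over $\LL_i$ and shows that every $Y_i$ is rigid.

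Now suppose, for contradiction, that $\KK[Y]$ carried a nonzero $\TT$-homogeneous LND $\partial$. Because $Y_1$ is rigid, Lemma~\ref{osn}(1) forces $\partial(y_1)\neq 0$; because $Y_2$ is rigid, Lemma~\ref{osn}(2) applied with $i=2$ and $j=1$ forces $\partial(y_1)=0$. These two conclusions are incompatible, and this is precisely where the hypothesis $m\geq 2$, guaranteeing two distinct indices, is used. Therefore no such $\partial$ exists and $Y$ is rigid, completing the induction. If one prefers, the same contradiction can be produced from $\partial(y_i)\neq 0$ via Lemma~\ref{pat} together with the rigidity of $Y_j$ for some $j\neq i$.

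The parts requiring genuine care here are bookkeeping rather than estimation, and I expect the main obstacle to be the following. The structural Lemmas~\ref{edin} and~\ref{osn} were established in Section~\ref{tre} under the standing assumption that the suspension is irreducible, whereas neither $Y$ nor the auxiliary $Y_i$ need be irreducible. The way around this is to note that each $y_i$ is a non-zero-divisor in $\KK[Y]$ (its vanishing locus meets every irreducible component properly, cf.\ Lemma~\ref{ir}) and that any connected $(\KK,+)$-action preserves each irreducible component, exactly as in the proof of Lemma~\ref{redu}. With these two facts the divisibility arguments of the form ``$y_i\mid\partial(y_i)\Rightarrow\partial(y_i)=0$'' that drive Lemmas~\ref{edin} and~\ref{osn} go through verbatim in the reducible case as well, so the induction applies uniformly. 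The genuinely analytic input, namely the Riemann--Hurwitz genus computation, has already been carried out once and for all in Lemma~\ref{cur}.
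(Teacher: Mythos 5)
Your proof is correct, but it takes a genuinely different route from the paper's. The paper does not induct on $m$: after homogenizing via Lemma~\ref{flza}, it handles possible reducibility by noting that the $(\KK,+)$-action preserves each irreducible component $Y(j)$, applies Lemma~\ref{edin} to each (irreducible) component — so the standing irreducibility hypothesis of Section~\ref{tre} is never violated — and then argues that the distinguished index $i$ with $\partial(y_i)\neq 0$ is the same on every component because it is determined by $\deg(\partial)$. It then inverts \emph{all} the remaining variables at once, passing to $\LL=\overline{\KK(y_1,\ldots,y_{i-1},y_{i+1},\ldots,y_m)}$, and lands directly in the curve case $\VV(y_i^{k_i}-P(z))\subset\LL^2$, contradicting Lemma~\ref{cur}; rigidity of the $Y_i$ and Lemmas~\ref{pat},~\ref{osn} are not used in this proof at all. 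Your induction instead uses the $Y_i$-machinery of Section~\ref{tre} as it was designed (rigidity of $Y_1$ and $Y_2$ fed into Lemma~\ref{osn} yields an immediate contradiction), which is arguably cleaner conceptually and confines the geometric input (Riemann--Hurwitz) to the base case; but it obliges you to extend Lemmas~\ref{edin} and~\ref{osn} past the standing irreducibility assumption, and you correctly identified this as the delicate point and supplied the right patch: component-invariance under connected group actions plus the componentwise form of ``$f\mid\partial(f)\Rightarrow\partial(f)=0$'' in the reduced ring $\KK[Y]\hookrightarrow\prod_j\KK[Y(j)]$ (reducedness follows from the squarefree factorization implicit in Lemma~\ref{ir}). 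For Lemma~\ref{osn}(1) in the reducible case you also tacitly need that the base change $\KK[Y]\to\LL_1\otimes\KK[Y]$ is injective, i.e.\ that nonzero polynomials in $y_1$ are non-zero-divisors — true, since $y_1$ is nonconstant on every component — so your observation that the lemmas hold over any algebraically closed field of characteristic zero (which the paper itself relies on when applying Lemma~\ref{cur} over $\LL$) closes the argument. In short: the paper trades your induction and lemma-extension for a one-step reduction to the curve plus the ``same index on all components'' observation; both proofs rest on the same engine of homogeneous divisibility and Lemma~\ref{cur}.
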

\begin{proof}
If $m=1$ we obtain the assertion of Lemma~\ref{cur}.

Let $m>1$. Suppose $Y$ is not rigid. Then by Lemma~\ref{flza} there is a nontrivial $\TT$-homogeneous LND $\partial$. Consider $(\KK,+)$-action on $Y$ corresponding to $\partial$. Since $(\KK,+)$ is a connected group, every irreducible component $Y(j)$ of $Y$ is $(\KK,+)$-invariant. Hence, $\partial$ gives an LND on $\KK[Y(j)]$ for every irreducible component $Y(j)$. Each $Y(j)$ has the following form
$$
\VV\left(y_1^{\frac{k_1}{l}}\ldots y_m^{\frac{k_m}{l}}-\varepsilon^i Q(z)\right),
$$
where $\varepsilon^{l}=1$, see Lemma~\ref{ir}. Since $Y(j)$ is irreducible, we can apply Lemma~\ref{edin}. Hence, there is the unique~$i$ such that $\partial(y_i)\mid_{Y(j)}\neq 0$. But the index $i$ is uniquely determined by $\deg(\partial)$. Therefore, it does not depend on $j$. So, we obtain that there is unique~$i$ such that $\partial(y_i)\neq 0$.
Denote 
$\mathbb{L}=\overline{\KK(y_1,\ldots ,y_{i-1}, y_{i+1}, \ldots, y_m)}$. 
We obtain LND of the variety
$$\VV(y_i^{k_i}-P(z))\subset \mathbb{L}^2.$$ 
This contradicts to Lemma~\ref{cur}.
\end{proof}

Consider the action of one-dimensional torus on $\KK[y_1,\ldots,y_m,z]$ given by 
$$t\cdot (y_1,\ldots,y_m,z)=(t^dy_1,\ldots,y_m, t^{k_1}z).$$ 
Let $\DD\subset\KK^\times$ is the stabilizer of $Y$.  That is $\DD=\{t\in\KK^\times\mid P(t^{k_1}z)=t^{k_1d} P(z)\}$.  
We have a natural $\DD$-action on $Y$, which a priori is not effective. Let us denote by $\overline{\DD}$ the quotient group $\DD/K$, where $K$ is the kernel of $\DD$-action on $Y$. 
\begin{de}
We call $\overline{\DD}$ the {\it additional quasitorus} of $Y$. 
\end{de}
Also we use the following notation $\widehat{\DD}=\overline{\DD}/(\overline{\DD}\cap \HH)$.

\begin{lem}\label{pl}
If $P(z)=z^d$, then $\DD\cong\KK^\times$, $\overline{\DD}\cong\KK^\times$, and $\widehat{\DD}\cong\KK^\times$. If $P(z)\neq z^d$ and $v$ is the maximal integer such that $P(z)~=~z^uQ(z^v)$ for some nonnegative $u$ and $v\leq d$, then $\DD\cong\ZZ_{vk_1}$, $\overline{\DD}\cong\ZZ_{\mathrm{lcm}(k_1,v)}$, and $\widehat{\DD}\cong\ZZ_v$. In particular, if $v=1$, then $\DD=\overline{\DD}\cong\ZZ_{k_1}\subset \HH$ and $\widehat{\DD}$ is trivial. 
\end{lem}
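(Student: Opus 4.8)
The plan is to treat the two cases of the statement separately, and in each to first read off $\DD\subset\KK^\times$ from the defining congruence on $t$ and then compute the two successive quotients $\overline{\DD}=\DD/K$ and $\widehat{\DD}=\overline{\DD}/(\overline{\DD}\cap\HH)$ explicitly. Throughout I use that the action multiplies $y_1$ by $t^d$, multiplies $z$ by $t^{k_1}$, and fixes $y_2,\dots,y_m$, so that its behaviour is controlled entirely by the two scalars $t^d$ and $t^{k_1}$.

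Suppose first that $P(z)=z^d$. Then the semi-invariance relation $P(t^{k_1}z)=t^{k_1d}P(z)$ holds identically, so $\DD=\KK^\times$. The kernel $K$ of the action consists of the $t$ fixing every generator of $\KK[Y]$, i.e. $K=\{t:t^d=t^{k_1}=1\}$, which is finite; likewise $\overline{\DD}\cap\HH$ is finite, being cut out by $t^{k_1}=1$. Since a quotient of $\KK^\times$ by a finite subgroup is again $\KK^\times$, I get $\DD\cong\overline{\DD}\cong\widehat{\DD}\cong\KK^\times$ at once.

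Now assume $P\neq z^d$ and write $P=\sum_j a_jz^j$ with $a_d=1$. Expanding $P(t^{k_1}z)=t^{k_1d}P(z)$ coefficient by coefficient converts membership $t\in\DD$ into the system $t^{k_1(d-j)}=1$ over all $j$ with $a_j\neq 0$; hence $\DD$ is the group of $N$th roots of unity with $N=k_1\cdot\gcd\{\,d-j:a_j\neq0\,\}$. The key arithmetic observation is that $\gcd\{d-j:a_j\neq0\}$ equals the gcd of all pairwise differences of the support of $P$, which is exactly the largest integer $v$ with $P(z)\in z^u\KK[z^v]$. This identifies $N=vk_1$ and gives $\DD\cong\ZZ_{vk_1}$.

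It then remains to pass to the quotients. As $\alpha_t$ fixes $z$ iff $t^{k_1}=1$ and fixes $y_1$ iff $t^d=1$, on an irreducible $Y$ the kernel is $K=\{t\in\DD:t^{k_1}=t^d=1\}$; for reducible $Y$ one reduces to the (mutually isomorphic) components via Lemma~\ref{ir}. Moreover $\overline{\DD}\cap\HH$ is precisely the image of $\{t\in\DD:t^{k_1}=1\}$, since these are the elements of $\DD$ leaving $z$, and hence $P$, untouched and acting only by rescaling the $y_i$ inside the stabilizer $\HH$ of the monomial. Forming $\overline{\DD}=\DD/K$ and then $\widehat{\DD}=\overline{\DD}/(\overline{\DD}\cap\HH)$ thus reduces the whole statement to a computation with cyclic groups of roots of unity. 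I expect this last bookkeeping to be the only real obstacle: one must track the orders of $K$ and of $\overline{\DD}\cap\HH$ through the gcd/lcm relations among $k_1$, $d$ and $v$ so that $\overline{\DD}$ and $\widehat{\DD}$ come out as the claimed cyclic groups $\ZZ_{\mathrm{lcm}(k_1,v)}$ and $\ZZ_v$. The degenerate case $v=1$, where $\DD=\overline{\DD}\cong\ZZ_{k_1}\subset\HH$ and $\widehat{\DD}$ collapses to the trivial group, should then fall out as an immediate consistency check.
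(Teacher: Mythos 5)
Your first half is correct and is in substance the paper's own argument: the paper pins down $\DD$ by noting that $z\mapsto t^{k_1}z$ must permute the roots of $P$, while you compare coefficients and take the gcd of the gaps in the support of $P$; the two are equivalent and both yield $\DD=\KK^\times$ for $P=z^d$ and $\DD\cong\ZZ_{vk_1}$ otherwise, and your $P=z^d$ case is complete. Your structural identifications are also right: writing $\mu_n\subset\KK^\times$ for the $n$th roots of unity, the kernel is $K=\{t\in\DD\mid t^d=t^{k_1}=1\}$ (since $y_1$ and $z$ are multiplied by $t^d$ and $t^{k_1}$ respectively and neither vanishes identically on $Y$), and $\overline{\DD}\cap\HH$ is exactly the image of $\{t\in\DD\mid t^{k_1}=1\}=\mu_{k_1}$.

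The genuine gap is precisely the step you defer as ``bookkeeping'': it is not routine, and carried out it does not deliver what you expect. Since $\mu_{\gcd(d,k_1)}\subset\DD$ always, $K=\mu_{\gcd(d,k_1)}$, so your plan proves $\overline{\DD}\cong\ZZ_{vk_1/\gcd(d,k_1)}$; this agrees with the asserted $\ZZ_{\mathrm{lcm}(k_1,v)}=\ZZ_{vk_1/\gcd(v,k_1)}$ only when $\gcd(d,k_1)=\gcd(v,k_1)$, which the hypotheses do not force. Concretely, take $k_1=2$ and $P(z)=z^4+z$ (monic, zero coefficient of $z^3$, $d=4$, $v=3$): then $\DD=\mu_6$, but $t=-1$ acts trivially on $Y$ (it multiplies $y_1$ by $t^4=1$ and $z$ by $t^2=1$), so $\overline{\DD}\cong\ZZ_3$ rather than $\ZZ_{\mathrm{lcm}(2,3)}=\ZZ_6$. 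Likewise the $v=1$ assertion $\DD=\overline{\DD}\cong\ZZ_{k_1}$ fails whenever $\gcd(d,k_1)>1$: for $P(z)=z^4+z^2+z$, $k_1=2$ one gets $\DD=K=\mu_2$ and $\overline{\DD}$ trivial, although the inclusion $\overline{\DD}\subset\HH$, which is what the rest of the paper actually uses, survives. Note that the paper's own proof is no help at the point where you stopped, since after computing $\DD$ it simply declares all remaining assertions easy; a blind proof must do the count, and doing it shows the printed $\overline{\DD}$ is off by the factor $\gcd(d,k_1)/\gcd(v,k_1)$. You should, however, complete the one count that does work and that is quoted later: the image of $\mu_{k_1}$ in $\overline{\DD}$ has order $k_1/\gcd(d,k_1)$, so $\widehat{\DD}$ is cyclic of order $\bigl(vk_1/\gcd(d,k_1)\bigr)/\bigl(k_1/\gcd(d,k_1)\bigr)=v$; thus the claims $\DD\cong\ZZ_{vk_1}$ and $\widehat{\DD}\cong\ZZ_v$ follow from your setup, while the middle claim, as stated, does not.
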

\begin{proof}
Suppose for some $t\in\KK\setminus\{0\}$ we have $P(t^{k_1}z)=t^{k_1d} P(z)$. Then the mapping $z\mapsto t^{k_1}z$ permutes the roots of $P$. If $t$ is not a root of unity, this implies that all roots of $P$ are equal to zero, that is $P(z)=z^d$. If $t^{k_1}$ is a primitive $a$-root of unity, then for every nonzero root $z_0$ of P, we obtain that $z_0, t^{k_1}z_0, \ldots t^{(a-1)k_1}z_0$ are roots of $P$. That is $P(z)=z^bQ(z^a)$. 

The group $\DD$ is a subgroup in $\KK^\times$. Therefore, $\DD$ is isomorphic either to $\KK^\times$ or to $\ZZ_a$. If $\DD\cong\KK^\times$, we have $P(z)=z^d$. 
If $P(z)=z^uQ(z^v)$, then $\DD\supset\ZZ_{k_1v}$. But if $\DD\neq\ZZ_{k_1v}$, then there is $t$ in $\DD$ such that $t^{k_1}$ is not a $v$-root of unity. 
Therefore, there is $a>v$ such that $P(z)=z^bQ(z^a)$ for some nonnegative $b$. 

All the other assertions of the lemma are easy.
\end{proof}

\section{Automorphisms of $m$-suspensions over a line with weights $\geq 2$}\label{no}

In this section we give an explicit list of generators of  automorphism group of an $m$-suspension over a line, when all weights of the $m$-suspension is greater then one. We obtain a decomposition of the automorphism group to a semidirect  product of its subgroups. Using this we obtain criteria of automorphism group of such an $m$-suspension to be commutative and solvable.

\begin{theor}\label{vtor}
Let $Y=\VV(y_1^{k_1}\ldots y_m^{k_m}-P(z))$, $d=\deg P(z)\geq 2$, and for all $i$ we have $k_i\geq 2$. We assume that $P(z)$ is a monic polynomial with zero coefficient of $z^{d-1}$. Then the group $\mathrm{Aut}(Y)$ is generated by 

1) the proper quasitorus $\HH$ of $Y$;

2) the symmetric group $S(Y)$ of Y;

3) the additional quasitorus $\overline{\DD}$ of $Y$;

4) if $m=1$ and $k_1=d=2$, then mapping 
$$
\begin{pmatrix}
y_1\\
z
\end{pmatrix}\mapsto
\begin{pmatrix}
a&b\\
b&a
\end{pmatrix}
\begin{pmatrix}
y_1\\
z
\end{pmatrix}, \qquad
a^2-b^2=1,
$$
should be added.
\end{theor}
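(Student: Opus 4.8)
The plan is to reduce an arbitrary $\varphi\in\Aut(Y)$ to a product of the four listed generators by first controlling the images $\varphi^*(y_i)$ and $\varphi^*(z)$. By Lemma~\ref{r} the variety $Y$ is rigid, so $\SAut(Y)$ is trivial and $\Aut(Y)$ contains no $(\KK,+)$-subgroup. The structural input I would use is that, for a rigid variety, the identity component $T_0$ of $\Aut(Y)$ is an algebraic torus (a connected affine algebraic group without $(\KK,+)$-subgroups has trivial unipotent radical and reductive part a torus) and the component group is finite. The argument then splits into identifying $T_0$ and handling the finite component group through its conjugation action on $T_0$.

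First I would treat $m\geq 2$, where each variety $Y_i$ over $\LL_i=\overline{\KK(y_i)}$ is again of the same shape, with all weights $\geq 2$ and $\deg P\geq 2$, hence rigid by Lemma~\ref{r} (by Lemma~\ref{cur} when $m=2$). Applying Lemma~\ref{aut} to the one-parameter subgroups generating $T_0$, and varying the excluded index, forces every element of $T_0$ to scale each variable: $t\cdot y_j=\lambda_j(t)\,y_j$. Substituting into the defining relation $y_1^{k_1}\cdots y_m^{k_m}=P(z)$ shows that $T_0$ either fixes $z$, in which case $\prod_j\lambda_j^{k_j}=1$ and $T_0=\TT$ has dimension $m-1$, or (only when $P(z)=z^d$) acts on $z$ by a nontrivial character, yielding one extra dimension. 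In either case $T_0$ is generated by the identity components of $\HH$ and $\overline{\DD}$, consistent with generators (1) and (3).

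Now take an arbitrary $\varphi\in\Aut(Y)$. Since $T_0$ is the identity component it is normal, so $\varphi$ normalizes it, permutes the $T_0$-weight spaces, and sends each $T_0$-semi-invariant prime element to another one. The key computation is that the only $T_0$-semi-invariant prime elements of $\KK[Y]$ are the scalar multiples of the $y_j$: any semi-invariant is, up to a $T_0$-invariant factor, a monomial in the $y_j$, while $z-\zeta$ and more general $g(z)$ are never prime, because $z-\zeta=0$ forces $y_1^{k_1}\cdots y_m^{k_m}=P(\zeta)$, whose quotient ring has zero divisors. Hence $\varphi^*(y_i)=c_i\,y_{\sigma(i)}$ for a permutation $\sigma$ and constants $c_i$, units being constant by Lemma~\ref{inv}. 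Because the $\TT$-weights $\chi_j$ of the $y_j$ are pairwise distinct and satisfy the single relation $\sum_j k_j\chi_j=0$, preservation of this relation forces $(k_{\sigma^{-1}(j)})=(k_j)$, i.e. $\sigma\in S(Y)$. Composing $\varphi$ with this element of $S(Y)$ we may assume $\varphi^*(y_i)=c_i y_i$, so $\varphi$ commutes with $\TT$ and $\varphi^*(z)\in\KK[Y]^{\TT}=\KK[z]$; the relation then gives $P(\varphi^*(z))=\bigl(\prod_i c_i^{k_i}\bigr)P(z)$, and the normalization of $P$ (monic, no $z^{d-1}$ term) forces $\varphi^*(z)=\alpha z$ with $\alpha$ realized by an element of $\overline{\DD}$. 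Thus $\varphi$ lies in the group generated by $\HH$, $S(Y)$ and $\overline{\DD}$, and no exceptional map arises when $m\geq 2$.

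Finally, the case $m=1$ must be handled directly, since there is no second variable to pass to $\LL_i$ and the suspension lemmas are unavailable: here $Y=\VV(y^k-P(z))$ is a curve, $\HH=\ZZ_k$, and $S(Y)$ is trivial. The genus computation in the proof of Lemma~\ref{cur} shows that $Y$ is rational only in the exceptional case $k=d=2$, where $Y\cong\KK^\times$ and the hyperbolic rotations $\left(\begin{smallmatrix}a&b\\ b&a\end{smallmatrix}\right)$ with $a^2-b^2=1$, mixing $y$ and $z$, appear; this is exactly generator (4). In all other $m=1$ cases the automorphism group preserves $\KK y$ and acts on $z$ through $\DD$, so $\Aut(Y)$ is generated by $\HH=\ZZ_k$ and $\overline{\DD}$. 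The main obstacle I anticipate is the combination of the characterization of the $T_0$-semi-invariant primes with the precise identification of $T_0$, since together these are what rule out automorphisms that mix $z$ into the $y_i$; the example $\VV(y^2-z^2-1)\cong\KK^\times$ shows that such mixing genuinely occurs precisely where this rigidity input degenerates.
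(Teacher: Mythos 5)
Your skeleton is the same as the paper's (rigidity via Lemma~\ref{r}, control of a maximal torus, $\varphi^*(y_i)=c_iy_{\sigma(i)}$ with $\sigma\in S(Y)$, then reduction of $\varphi^*(z)$ to $\overline{\DD}$), but the two pillars you lean on are unsound. First, $\Aut(Y)$ is not a priori an algebraic group, so ``the identity component $T_0$ is a torus with finite component group'' is not yet meaningful; algebraicity of $\Aut(Y)$ is a \emph{corollary} of the theorem in the paper, not an input. What the paper actually uses is \cite[Theorem~2.1]{AG}: the automorphism group of a rigid variety contains a unique maximal torus, which every automorphism therefore normalizes; this supplies exactly the normalization property you want from $T_0$, and your argument should be rerouted through it (your related step ``$\varphi^*(y_i)=c_iy_i$, so $\varphi$ commutes with $\TT$'' also needs this, since agreement on the $y_i$ alone does not determine an automorphism). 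Second, and more seriously, your self-declared key computation is false in both directions: $y_j$ is \emph{never} prime in $\KK[Y]$, because $\KK[Y]/(y_j)\cong\KK[y_1,\dots,\widehat{y_j},\dots,y_m,z]/(P(z))$ and $P$ splits into $d\geq 2$ linear factors; conversely $z-\zeta$ \emph{can} be prime, e.g.\ for $m\geq 2$, $\gcd(k_1,\dots,k_m)=1$ and $P(\zeta)\neq 0$ the quotient $\KK[y_1,\dots,y_m]/(y_1^{k_1}\cdots y_m^{k_m}-P(\zeta))$ is a domain, so your zero-divisor argument fails. The workable notion is indecomposability combined with Lemma~\ref{inv}, as in the paper: every indecomposable $\TT$-semi-invariant of nonzero weight is $\lambda y_i$. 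Moreover, in the toric case $P(z)=z^d$ the element $z$ is itself an indecomposable semi-invariant of nonzero weight, so any characterization of your kind breaks precisely where mixing threatens; the paper needs a separate weight-monoid argument there (for $m\geq 2$ the weights of the $y_i$ are the unique minimal generators of the weight cone, and the weight of $z$ is the minimal element of $U$). A smaller but genuine error: $\KK[Y]^{\TT}=\KK[z]$ fails whenever $g=\gcd(k_1,\dots,k_m)>1$, since $y_1^{k_1/g}\cdots y_m^{k_m/g}$ is $\TT$-invariant; the paper instead proves that the full disconnected stabilizer $\HH$ is normal in $\Aut(Y)$ and uses $\KK[Y]^{\HH}=\KK[z]$.

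The case $m=1$ is also not established by what you wrote. Your claim that ``in all other $m=1$ cases the automorphism group preserves $\KK y$'' is unproven, and the dichotomy you extract from the genus computation (rational if and only if $k_1=d=2$) is the wrong invariant for the problem: $\VV(y^2-z^3)$ is rational yet admits no mixing, while mixing automorphisms exist whenever $k_1=d$, not only for $k_1=d=2$. Concretely, $\VV(y^3-z^3-1)$ has genus one, yet carries the involution $(y,z)\mapsto(-z,-y)$, which does not preserve $\KK y$; similarly for $P(z)=z^d$ with $k_1=d\geq 3$ the swap $y\leftrightarrow z$ mixes the variables. So the $m=1$ analysis must be done by the explicit linear-substitution computation on $\psi(z)=az+by_1$, $\psi(y_1)=a'z+b'y_1$ (this is how the paper treats $k_1=d$, and it is a delicate corner even there), not by an appeal to rationality.
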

\begin{proof}
By Lemma~\ref{r}, $Y$ is rigid. Hence, by~\cite[Theorem~2.1]{AG} (see also \cite[Corollary~3.4]{FZ}), $\Aut(Y)$ contains the unique maximal torus $\overline{T}$. This implies that every automorphism  normalizes the $\overline{T}$-action on $Y$. 

Suppose $Y$ does not admit an effective action of $m$-dimensional torus. Then $\overline{T}=\TT$. Let $f$ be a $\TT$-semi-invariant of nonzero weight. Then there is an index $i$ such that $y_i\mid f$. Lemma~\ref{inv} implies, that $Y$ does not admit any nonconstant invertible functions. Then every indecomposable $\TT$-semi-invariant of a nonzero weight has the form $\lambda y_i$, $\lambda\in\KK\setminus\{0\}$.  
Let $\psi \in\Aut(\KK[Y])$. Then $\psi$ maps indecomposable $\TT$-semi-invariants onto indecomposable $\TT$-semi-invariants. Therefore there are a permutation $\sigma\in\mathrm{S}_m$ and $\lambda_1,\ldots,\lambda_m\in\KK\setminus\{0\}$  such that $\psi(y_j)=\lambda_j y_{\sigma(j)}$. 
We have, that
$$\psi\left(y_1^{k_1}\ldots y_m^{k_m}\right)=\left(\prod_{j=1}^m\lambda_j^{k_j}\right)y_{\sigma(1)}^{k_1}\ldots y_{\sigma(m)}^{k_m}$$
is a $\TT$-invariant. Therefore, for all $i$ we have $k_i=k_{\sigma(i)}$. 

There is an automorphism $\tau$ in the semidirect product $S(Y)\rightthreetimes \TT$ such that 
$$
\forall j\in\{2,\ldots,m\}\ \text{we have}\  (\tau\circ\psi)(y_j)=y_j, (\tau\circ\psi)(y_1)=\mu y_1,\qquad \mu\in\KK\setminus\{0\}.
$$ 

Since for every $\alpha\in\Aut(\KK[Y])$ we have $\alpha(y_j)=\lambda_j y_{\sigma(j)}$ and for all $j$ it occurs $k_j=k_{\sigma(j)}$, we obtain that~$\HH$ is a normal subgroup of $\Aut(\KK[Y])$. Therefore, $\KK[Y]^{\HH}=\KK[z]$ is invariant under every $\alpha\in\Aut(\KK[Y])$.

Since $\tau\circ\psi$ is an isomorphism, we have $(\tau\circ\psi)(z)=az+b$ for some $a\in\KK\setminus\{0\}$ and $b\in\KK$. 
We obtain $P(az+b)=\mu^{k_1}P(z)$. Therefore, the mapping $z\mapsto az+b$ permutes the roots of $P(z)$. Since $P(z)$ has zero coefficient of $z^{d-1}$, the sum of roots of $P(z)$ is equal to zero. Therefore, $b=0$. So $P(az)=\mu^{k_1}P(z)$. This implies $(\tau\circ\psi)\in\overline{\DD}$.

If $Y$ admits an effective action of $m$-dimensional torus.  Then $P(z)=z^d$. Indeed, $\TT\subset\overline{T}$. That is every element $t\in\overline{T}$ commutes with $\TT$. Therefore, $t\cdot y_i=\lambda_i y_i$. As in the nontoric case we have $t\cdot z=az+b$. Since $\overline{T}$ is a connected group, it can not nontrivially permute the roots of $P(z)$.  This implies $b=0$ and  $P(z)=z^d$. 

Let us consider a set $W$ consisting of all $\overline{T}$-weights~$w$ such that $\KK[Y]_w\neq\{0\}$. It is easy to see that $W$  is a monoid generated by the weights of all $y_i$ and the weight of $z$. Indeed, in every homogeneous component there is a monomial $y_1^{a_1}\ldots y_m^{a_m}z^b$, which is not divisible by $y_1^{k_1}\ldots y_m^{k_m}-P(z)$, and therefore, it is nonzero.  We call $W$
the {\it weight monoid} of $Y$. Let $V=W\otimes \mathbb{Q}$ be a $\mathbb{Q}$-vector space spanned by $W$. Linear combinations of elements of $W$ with nonnegative coeffisients form a cone, which is called the {\it weight cone}.
It is easy to see, that $\overline{T}$ - action is {\it pointed}, that is the weight cone does not contain any lines. The weight cone is generated by weights of $y_1,\ldots,y_m$. If $m\neq 1$, this is the unique minimal set of generators of the weight cone consisting of indecomposable weights. Hence, there are $\sigma\in\mathrm{S}_m$ and $\lambda_1,\ldots,\lambda_m\in\KK\setminus\{0\}$ such that $\psi(y_j)=\lambda_j y_{\sigma(j)}$.

Let $w_1,w_2\in W$. We say that $w_1\leq w_2$ if  there exists $w_3\in W$ such that $w_1+w_3=w_2$. Let $U$ be the set of weights of homogeneous elements in $\KK[Y]\setminus\KK[y_1,\ldots,y_m]$. It is easy to see, that the weight of $z$ is the unique minimal nonzero element in $U$. Therefore, the weight of $\psi(z)$ equals the weight of $z$. Hence, the weight of $\psi(y_1^{k_1}\ldots y_m^{k_m})$ equals the weight of $y_1^{k_1}\ldots y_m^{k_m}$.  This follows, that for all $j$ we have $k_j=k_{\sigma(j)}$.
Hence, there is $\tau\in S(Y)$ such that $(\tau\circ\psi)(y_i)=\lambda_i y_i$. Then $(\tau\circ\psi)(z)=az$. Therefore, there exists $\xi\in\overline{\DD}$ such that $(\xi\circ\tau\circ\psi)(z)=z$, $(\xi\circ\tau\circ\psi)(y_i)=\mu_i y_i$. Hence, $(\xi\circ\tau\circ\psi)\in\HH$.

If $m=1$, we have $Y=\VV(y_1^{k_1}-z^d)$. Then $\overline{T}=\overline{\DD}$. Functions $y_1$ and $z$ are $\overline{T}$-semi-invariants. If $k_1<d$, then the weight of $z$ is less then the weight of $y_1$. That is $\psi(z)=\lambda z$. We can take $\tau\in\overline{\DD}$ such that $(\tau\circ\psi)(z)=z$. Then $\tau\circ\psi\in\HH$. If $k_1<d$, then the weight of~$y_1$ is less then the weight of $z$. That is $\psi(y_1)=\lambda y_1$. Then $\psi(z)=\mu z$. Therefore, again there is $\tau\in\overline{\DD}$ such that $\tau\circ\psi\in\HH$.

If $k_1=d$, then the weights of $y_1$ and $z$ are equal. Since it is the unique indecomposable weight, the weights of $\psi(z)$ and $\psi(y_1)$ are the same. Therefore, 
$$\psi(z)=az+by_1,\qquad \psi(y_1)=a'z+b'y_1.$$ We have $\psi(y_1^d-z^d)=\mu(y_1^d-z^d)$, $\mu\in\KK\setminus\{0\}$. Suppose $d\geq 3$. We obtain
$$
\begin{cases}
a^d-a'^d=-\mu;\\
a^{d-1}b-a'^{d-1}b'=0;\\
a^{d-2}b^2-a'^{d-2}b'^2=0;\\
\vdots\\
b^d-b'^d=\mu.
\end{cases}
$$
Let $a'\neq 0$. Then from the second equation we obtain $b'=\frac{a^{d-1}b}{a'^{d-1}}$. From the third equation we have 
$$a^{d-2}b^2-a'^{d-2}\left(\frac{a^{d-1}b}{a'^{d-1}}\right)^2=0.$$
Hence, $a^{d-2}b^2(a'^{d}-a^d)=0.$ That is $a^{d-2}b^2\mu=0$. Hence, $a=0$ or $b=0$. Analogically if $a\neq 0$, then $a'=0$ or $b'=0$. And so on. 
Therefore, we have $a=b'=0$ or $b=a'=0$. Hence, $\psi$ can be obtained by composition of a $\overline{\DD}$-element, an $\HH$-element, and may be swapping $y_1\leftrightarrow z$. But since $y_1^d=z^d$, we have that $y_1=\varepsilon z$ for some $d$-root of unity $\varepsilon$. Therefore, swapping $y_1\leftrightarrow z$ can be obtained as a composition of a $\overline{\DD}$-element and an $\HH$-element.

Now let $m=1$, $d=k_1=2$. Then we obtain 
$$
\begin{cases}
a^2-a'^2=-\mu;\\
ab-a'b'=0;\\
b^2-b'^2=\mu.
\end{cases}
$$
Up to element of $\overline{T}$, we can assume $\mu=1$. It is easy to see, that this gives $a^2-b^2=1$ and $a'=\pm b$, $b'=\pm a$. Up to element of $\HH\cong\ZZ_2$, we have $a^2-b^2=1$, $a'=b$, $b'=a$. 
\end{proof}

Note that $\HH$ and $\overline{\DD}$ commute. Therefore, the subgroup in $\Aut(Y)$ generated by $\HH$ and $\overline{\DD}$ is isomorphic to $(\HH\times\overline{\DD})/(\HH\cap\overline{\DD})$. So we obtain the following corollary. 

\begin{cor}\label{ct}
Let $Y=\VV(y_1^{k_1}\ldots y_m^{k_m}-P(z))$, $d=\deg P(z)\geq 2$, and for all $i$ we have $k_i\geq 2$. We assume, that $P(z)$ is a monic polynomial with zero coefficient of $z^{d-1}$. Suppose $Y\neq\VV(y_1^{2}-z^2)$. Then the group $\Aut(Y)$ is isomorphic to a semidirect product 
$$S(Y)\rightthreetimes ((\HH\times\overline{\DD})/(\HH\cap\overline{\DD})).$$
\end{cor}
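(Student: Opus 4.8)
The plan is to realize $\Aut(Y)$ as a split extension, exhibiting the commutative group $N:=\langle\HH,\overline{\DD}\rangle$ as a normal subgroup and $S(Y)$ as a complement. By the remark preceding the corollary, $\HH$ and $\overline{\DD}$ commute, so $N\cong(\HH\times\overline{\DD})/(\HH\cap\overline{\DD})$; thus it suffices to produce a split exact sequence $1\to N\to\Aut(Y)\xrightarrow{\rho}S(Y)\to1$. As a first reduction I would note that, since $Y\neq\VV(y_1^2-z^2)$, the exceptional case $m=1$, $k_1=d=2$ of Theorem~\ref{vtor} is excluded, so part (4) does not occur and $\Aut(Y)$ is generated by $\HH$, $S(Y)$ and $\overline{\DD}$.

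Next I would package the action on the variables into a homomorphism $\rho\colon\Aut(Y)\to S(Y)$. The proof of Theorem~\ref{vtor} establishes, in every remaining case, that each $\psi\in\Aut(Y)$ satisfies $\psi(y_j)=\lambda_j y_{\sigma(j)}$ for suitable $\lambda_j\in\KK\setminus\{0\}$ and a permutation $\sigma$ with $k_j=k_{\sigma(j)}$ for all $j$, i.e. $\sigma\in S(Y)$; setting $\rho(\psi)=\sigma$ gives a well-defined group homomorphism. It is surjective and in fact split, since $\rho$ restricts to the identity on the subgroup $S(Y)\subset\Aut(Y)$.

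The central step is to identify $\ker\rho$ with $N$. The inclusion $N\subset\ker\rho$ is immediate: both $\HH$ and the additional quasitorus $\overline{\DD}$, arising from the scaling $t\cdot(y_1,\ldots,y_m,z)=(t^dy_1,\ldots,y_m,t^{k_1}z)$, fix each $y_i$ up to a scalar, so $\sigma=\mathrm{id}$. For the reverse inclusion I would take $\psi$ with $\rho(\psi)=\mathrm{id}$, so $\psi(y_i)=\lambda_i y_i$, and invoke the case analysis inside the proof of Theorem~\ref{vtor} (the non-toric case, the toric case with $m\geq2$, and the cases $m=1$ with $k_1\neq d$ or $k_1=d\geq3$), each of which concludes precisely that such a $\psi$ is a composition of an element of $\overline{\DD}$ with an element of $\HH$, hence lies in $N$. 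This is exactly where the hypothesis $Y\neq\VV(y_1^2-z^2)$ is needed: the excluded surface $m=1$, $k_1=d=2$ admits the matrix automorphisms of part (4), which fix $y_1$ only up to scalar yet are not products of an $\HH$- and an $\overline{\DD}$-element, so without the exclusion one would have $N\subsetneq\ker\rho$.

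Granting $\ker\rho=N$, the subgroup $N$ is normal as a kernel, and $S(Y)$ is a complement meeting it trivially: if $\psi\in S(Y)\cap N$ then $\rho(\psi)=\mathrm{id}$, but $\rho$ is injective on $S(Y)$, forcing $\psi=\mathrm{id}$. Hence the split exact sequence yields $\Aut(Y)\cong S(Y)\rightthreetimes N$, and substituting $N\cong(\HH\times\overline{\DD})/(\HH\cap\overline{\DD})$ gives the claim. I expect the genuine obstacle to be the reverse kernel inclusion: one must verify that the reductions carried out in every surviving case of Theorem~\ref{vtor} really terminate in $\HH\cdot\overline{\DD}$, and in particular that, once $\VV(y_1^2-z^2)$ is removed, no automorphism fixes all the $y_i$ up to scalars without being such a product.
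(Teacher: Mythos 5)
Your proposal is correct and takes essentially the same route as the paper: the paper deduces the corollary from Theorem~\ref{vtor} together with the remark that $\HH$ and $\overline{\DD}$ commute (so that $\langle\HH,\overline{\DD}\rangle\cong(\HH\times\overline{\DD})/(\HH\cap\overline{\DD})$), which is exactly your argument with the splitting made explicit. Your homomorphism $\rho\colon\Aut(Y)\to S(Y)$, the identification $\ker\rho=\HH\cdot\overline{\DD}$ via the case analysis in the proof of Theorem~\ref{vtor}, and the role of excluding $\VV(y_1^{2}-z^2)$ (whose extra automorphisms from part~(4) would enlarge the kernel) simply spell out the semidirect-product decomposition the paper leaves implicit.
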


\begin{re}
We obtain, that $\Aut(Y)$ is a linear algebraic group. If $Y\neq\{y_1^{2}=z^2\}$, then the neutral component of $\Aut(Y)$ is a torus.
\end{re}

Corollary~\ref{ct} and Lemma~\ref{pl} imply the following corollary. 
\begin{cor}
Let $Y=\VV(y_1^{k_1}\ldots y_m^{k_m}-P(z))$, $d=\deg P(z)\geq 2$, and for all~$i$ we have $k_i\geq 2$. We assume, that $P(z)$ is a monic polynomial with zero coefficient of $z^{d-1}$. Suppose that there is no $v>1$ such that $P(z)=z^uQ(z^v)$. Then the group $\Aut(Y)$ is isomorphic to a semidirect product $S(Y)\rightthreetimes \HH$.
\end{cor}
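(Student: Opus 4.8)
The plan is to deduce the statement directly from Corollary~\ref{ct} and Lemma~\ref{pl}, the point being that under the present hypothesis the factor $\overline{\DD}$ is already contained in $\HH$, so that the quotient $(\HH\times\overline{\DD})/(\HH\cap\overline{\DD})$ appearing in Corollary~\ref{ct} collapses to $\HH$. First I would verify that Corollary~\ref{ct} is applicable, i.e. that $Y\neq\VV(y_1^2-z^2)$. That variety has $m=1$, $k_1=d=2$ and $P(z)=z^2$; but $z^2=z^0Q(z^2)$ with $Q(w)=w$ exhibits an admissible $v=2>1$, which the hypothesis forbids, so this case is excluded. The same remark shows more generally that $P(z)\neq z^d$, since $z^d=z^0Q(z^d)$ would force an admissible $v=d\geq 2$.

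Next I would invoke Lemma~\ref{pl}. Because $P(z)\neq z^d$, the lemma describes $\overline{\DD}$ through the maximal integer $v\leq d$ with $P(z)=z^uQ(z^v)$. The value $v=1$ is always attained (take $u=0$, $Q=P$), and by hypothesis no $v>1$ occurs, so this maximal $v$ equals $1$. The final ``in particular'' clause of Lemma~\ref{pl} then yields $\overline{\DD}\cong\ZZ_{k_1}$ together with the crucial inclusion $\overline{\DD}\subset\HH$. With this inclusion in hand, the group appearing in Corollary~\ref{ct} simplifies: one has $\HH\cap\overline{\DD}=\overline{\DD}$, and since $\HH$ is a quasitorus (hence abelian) containing $\overline{\DD}$, the multiplication map $\HH\times\overline{\DD}\to\HH$, $(h,d)\mapsto hd$, is a surjective homomorphism whose kernel is precisely the anti-diagonal copy $\{(d^{-1},d):d\in\overline{\DD}\}$ of $\HH\cap\overline{\DD}$. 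Therefore $(\HH\times\overline{\DD})/(\HH\cap\overline{\DD})\cong\HH$, and substituting this into Corollary~\ref{ct} gives $\Aut(Y)\cong S(Y)\rightthreetimes\HH$.

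I do not expect a genuine obstacle here, as all the substance is already carried by Corollary~\ref{ct} and Lemma~\ref{pl}. The only point requiring care is the bookkeeping of the quotient $(\HH\times\overline{\DD})/(\HH\cap\overline{\DD})$: one must recognize that, because $\HH$ and $\overline{\DD}$ commute and their common intersection is identified inside both factors, the copy of $\HH\cap\overline{\DD}$ being divided out sits \emph{anti-diagonally}, so that the quotient is honestly $\HH$ rather than some larger group. It also remains to confirm that the semidirect decomposition over $S(Y)$ survives the collapse, but this is immediate: $\overline{\DD}$ was already absorbed into $\HH$, and $S(Y)$ normalizes $\HH$ exactly as in the proof of Theorem~\ref{vtor}, so the semidirect structure $S(Y)\rightthreetimes\HH$ is inherited unchanged.
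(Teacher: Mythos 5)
Your proposal is correct and matches the paper's (implicit) argument exactly: the paper derives this corollary directly from Corollary~\ref{ct} and Lemma~\ref{pl}, just as you do, with the hypothesis forcing the maximal $v$ to be $1$ so that $\overline{\DD}\subset\HH$ and the quotient $(\HH\times\overline{\DD})/(\HH\cap\overline{\DD})$ collapses to $\HH$. Your additional checks --- that $P(z)\neq z^d$ (so the excluded case $Y=\VV(y_1^2-z^2)$ cannot occur) and that the anti-diagonal kernel makes the collapse honest --- are exactly the right bookkeeping, which the paper leaves to the reader.
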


Let us give some more corollaries from Theorem~\ref{vtor}.

\begin{cor}\label{cb}
Let $Y=\VV(y_1^{k_1}\ldots y_m^{k_m}-P(z))$, $d=\deg P(z)\geq 2$, and for all~$i$ we have $k_i\geq 2$. If $Y\neq\{y_1^{2}=z^2\}$, then the group $\Aut(Y)$ is commutative if and only if all $k_i$ are different. If $Y=\{y_1^{2}=z^2\}$, then the group $\Aut(Y)$ is not commutative.
\end{cor}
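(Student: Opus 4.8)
The plan is to read the answer directly off the semidirect product decomposition of Corollary~\ref{ct}, handling the exceptional variety at the very end. Assume first that $Y\neq\VV(y_1^2-z^2)$, so that $\Aut(Y)\cong S(Y)\rightthreetimes N$ with $N=(\HH\times\overline{\DD})/(\HH\cap\overline{\DD})$. Since $\HH$ and $\overline{\DD}$ are quasitori, hence abelian, and they commute, the group $\HH\times\overline{\DD}$ is abelian; as $N$ is a quotient of it, $N$ is abelian. Thus any non-commutativity of $\Aut(Y)$ must come from $S(Y)$ together with its conjugation action on $N$. Recalling that $S(Y)\cong\mathrm{S}_{m_1}\times\ldots\times\mathrm{S}_{m_n}$, where $m_i$ counts the variables $y_j$ sharing a common value of $k_j$, the implication ``all $k_i$ different $\Rightarrow$ commutative'' is immediate: if the $k_i$ are pairwise distinct then every $m_i=1$, so $S(Y)$ is trivial and $\Aut(Y)=N$ is abelian.

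For the converse, suppose $k_i=k_j$ for some $i\neq j$, so that the transposition $\sigma\in S(Y)$ with $\sigma^*(y_i)=y_j$, $\sigma^*(y_j)=y_i$ (all other variables and $z$ fixed) is a genuine automorphism. I would exhibit an element of $\TT\subset\HH$ failing to commute with $\sigma$, namely the one-parameter subgroup attached to the grading $\mathfrak{h}_{ij}$: $h_s^*(y_i)=s^{-k_j}y_i$, $h_s^*(y_j)=s^{k_i}y_j$, with all other variables and $z$ fixed; this lies in $\TT$ since $k_i(-k_j)+k_j k_i=0$. Using $(\varphi\circ\psi)^*=\psi^*\circ\varphi^*$ and $\sigma^{-1}=\sigma$, one computes $(\sigma h_s\sigma^{-1})^*(y_i)=\sigma^*h_s^*\sigma^*(y_i)=s^{k_i}y_i$, whereas $h_s^*(y_i)=s^{-k_j}y_i$. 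As $k_i,k_j>0$, these differ for generic $s$, so $\sigma h_s\sigma^{-1}\neq h_s$ and $\Aut(Y)$ is non-commutative. This establishes the equivalence whenever $Y\neq\VV(y_1^2-z^2)$.

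It remains to treat $Y=\VV(y_1^2-z^2)$, where all (single) weights coincide yet the group is asserted non-commutative. Here Theorem~\ref{vtor} supplies the matrix automorphisms $g_{a,b}\colon y_1\mapsto ay_1+bz,\ z\mapsto by_1+az$ with $a^2-b^2=1$, alongside the order-two element $\eta\in\HH$ given by $y_1\mapsto -y_1$, $z\mapsto z$. Choosing any $g_{a,b}$ with $b\neq 0$ and computing on pullbacks gives $(\eta g_{a,b}\eta^{-1})^*(y_1)=\eta^*g_{a,b}^*\eta^*(y_1)=ay_1-bz$, which differs from $g_{a,b}^*(y_1)=ay_1+bz$. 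Hence $\eta$ and $g_{a,b}$ do not commute, so $\Aut(Y)$ is non-commutative in this case as well.

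The main obstacle is the bookkeeping in the converse direction: one must produce an explicit torus element that distinguishes $y_i$ from $y_j$ and then verify that conjugation by $\sigma$ is genuinely nontrivial. This is exactly what the $\mathfrak{h}_{ij}$ one-parameter subgroup provides, since its weights $-k_j$ and $k_i$ have opposite signs; everything else is a formal consequence of the structure theorem and the abelianness of the quasitorus factors.
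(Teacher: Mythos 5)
Your proof is correct and follows essentially the same route as the paper: both directions rest on Corollary~\ref{ct} (trivial $S(Y)$ gives a quasitorus, while $k_i=k_j$ yields a transposition failing to commute with a one-parameter subgroup of $\TT$ — your $\mathfrak{h}_{ij}$ element is just a reparametrization of the paper's $t\cdot y_i=ty_i$, $t\cdot y_j=t^{-1}y_j$), and the exceptional case $\VV(y_1^2-z^2)$ is settled via the extra automorphisms from Theorem~\ref{vtor}. Your only deviation is cosmetic: you carry out explicitly the non-commutativity check for $\VV(y_1^2-z^2)$ that the paper leaves as ``one can check.''
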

\begin{proof}
Suppose that $k_i=k_j$. Then we can consider the subgroup isomorphic to $(\KK^\times,\cdot)$ acting by $t\cdot y_i=ty_i$, $t\cdot y_j=t^{-1}y_j$ and the subgroup isomorphic to $\ZZ_2$ swapping $y_i$ and~$y_j$. Then $\Aut(Y)$ contains a subgroup isomorphic to $\ZZ_2\rightthreetimes\KK^\times$, where nonzero element of $\ZZ_2$ acts on $\KK^\times$ by multiplication to $-1$. Therefore, $\Aut(Y)$ is not commutative. 

Let $Y\neq\{y_1^{2}=z^2\}$. If all $k_i$ are different, then $S(Y)$ is trivial. Therefore, by Corollary~\ref{ct}, $\Aut(Y)$ is a quasitorus. 

An explicit description of $\Aut(\{y_1^{2}=z^2\})$ was obtained in proof of Theorem~\ref{vtor}. One can check that it is not commutative.
\end{proof}

If $\Aut(Y)$ is commutative, then it is a quasitorus. Let us investigate, when it is a torus.

\begin{cor}\label{toraut}
Let $Y=\VV(y_1^{k_1}\ldots y_m^{k_m}-P(z))$, $d=\deg P(z)\geq 2$, and for all~$i$ we have $k_i\geq 2$. We assume, that $P(z)$ is a monic polynomial with zero coefficient of $z^{d-1}$. Then the group $\Aut(X)$ is a torus if and only if all $k_i$ are different and one of the following occurs 

1) $P(z)=z^d$ and $\gcd(k_1,\ldots,k_m,d)=1$; 

2) there is no $v>1$ such, that $P(z)=z^uQ(z^v)$, and $\gcd(k_1,\ldots,k_m)=1$.
\end{cor}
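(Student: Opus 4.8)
The plan is to reduce everything to the structure theorem already obtained and then read off connectivity from a lattice computation. A torus is in particular commutative, so by Corollary~\ref{cb} a necessary condition for $\Aut(Y)$ to be a torus is that all $k_i$ are pairwise different and $Y\neq\VV(y_1^2-z^2)$. The excluded variety has $P(z)=z^2=z^d$ with $\gcd(k_1,d)=2$ and $v=2$, so it satisfies neither (1) nor (2); thus it is consistently a non-torus with false right-hand side, and I may assume from now on that all $k_i$ are distinct and $Y\neq\VV(y_1^2-z^2)$. Then $S(Y)$ is trivial, and Corollary~\ref{ct} gives $\Aut(Y)\cong(\HH\times\overline{\DD})/(\HH\cap\overline{\DD})$, the group generated by the two diagonal subgroups $\HH$ and $\overline{\DD}$.

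Next I would make this group completely explicit. Since $\HH$ and $\overline{\DD}$ both act by rescaling each variable, $\Aut(Y)$ equals the group $D$ of all diagonal automorphisms $y_i\mapsto s_iy_i$, $z\mapsto s_0z$. Writing $P(z)=\sum_{j\in J}c_jz^j$ (monic of degree $d$, so $d\in J$), such a map preserves the ideal $(\prod_i y_i^{k_i}-P(z))$ if and only if $P(s_0z)=(\prod_i s_i^{k_i})P(z)$, i.e. if and only if $s_0^{j}=\prod_i s_i^{k_i}$ for every $j\in J$. Taking $j=d$ gives $\prod_i s_i^{k_i}=s_0^d$, and the remaining equations become $s_0^{d-j}=1$ for $j\in J$, equivalently $s_0^{v}=1$, where $v=\gcd\{d-j:j\in J\}$ is precisely the integer of Lemma~\ref{pl}, namely the largest $v$ with $P(z)=z^uQ(z^v)$. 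Hence $D=\{(s_0,s_1,\ldots,s_m)\in(\KK^\times)^{m+1}:\prod_i s_i^{k_i}=s_0^d\}$ when $P(z)=z^d$, and $D=\{\prod_i s_i^{k_i}=s_0^d,\ s_0^{v}=1\}$ otherwise.

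Finally, being a closed subgroup of the torus $(\KK^\times)^{m+1}$, the group $D$ is diagonalizable, hence a quasitorus, and it is a torus if and only if it is connected, i.e. if and only if its character group $\ZZ^{m+1}/L$ is torsion-free, where $L$ is the sublattice spanned by the defining characters. In the toric case $L=\langle(k_1,\ldots,k_m,-d)\rangle$ is a single vector, and $\ZZ^{m+1}/L$ is torsion-free exactly when that vector is primitive, i.e. when $\gcd(k_1,\ldots,k_m,d)=1$; this is condition (1). In the remaining case $L=\langle(k_1,\ldots,k_m,-d),(0,\ldots,0,v)\rangle$, and the nonvanishing $2\times2$ minors of the associated matrix are exactly the numbers $k_iv$; thus $\ZZ^{m+1}/L$ is torsion-free if and only if $\gcd(k_1v,\ldots,k_mv)=v\cdot\gcd(k_1,\ldots,k_m)=1$, i.e. if and only if $v=1$ and $\gcd(k_1,\ldots,k_m)=1$, which is condition (2).

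I expect the main obstacle to be the bookkeeping in the second step: correctly distilling from the monomial support $J$ of $P$ the single modular condition $s_0^{v}=1$ and identifying $v$ with the invariant of Lemma~\ref{pl}, together with keeping in mind that connectivity of a diagonalizable group is governed by the saturation (primitivity) of the relation lattice $L$, not merely by its rank. A useful sanity check is that when $v=1$ the torsion condition forces $s_0=1$, so $D=\HH\cong\TT\times\ZZ_{\gcd(k_1,\ldots,k_m)}$ and torus-ness reduces to $\gcd(k_1,\ldots,k_m)=1$, in agreement with the last assertion of Lemma~\ref{pl} that $\overline{\DD}\subset\HH$ in that case.
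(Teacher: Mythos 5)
Your proof is correct, and it takes a genuinely different route from the paper's. The two arguments share the same starting point: commutativity (Corollary~\ref{cb}) forces all $k_i$ to be distinct and disposes of $\VV(y_1^2-z^2)$, and then Corollary~\ref{ct} with $S(Y)$ trivial reduces everything to the diagonal group generated by $\HH$ and $\overline{\DD}$. From there the paper proceeds by four separate ad hoc mechanisms: for $P=z^d$ with $\gcd(k_1,\ldots,k_m,d)>1$ it invokes reducibility (Lemma~\ref{ir}) and the fact that a connected group cannot permute irreducible components; for $\gcd(k_1,\ldots,k_m,d)=1$ it exhibits an explicit root of unity $t_0\in\DD$ whose action realizes the torsion part of $\HH$, so that $\Aut(Y)$ is generated by the connected groups $\TT$ and $\overline{\DD}$; for $v\geq 2$ it observes that the $\Aut(Y)$-orbit of $z$ is finite but nontrivial; and for $P\neq z^d$ with $\gcd(k_i)>1$ it uses disconnectedness of $\HH$ directly. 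You replace all four cases by one uniform computation: identify $\Aut(Y)$ with the full diagonal stabilizer $D$, present $D$ by the character sublattice $L\subset\ZZ^{m+1}$, and read connectivity off the primitivity of $L$, i.e.\ the gcd of maximal minors via Smith normal form. Two small points deserve a line each in a final write-up: the equality $\Aut(Y)=D$ rests on the sandwich $\HH,\overline{\DD}\subseteq D\subseteq\Aut(Y)=\langle\HH,\overline{\DD}\rangle$ (as written you only justify $\Aut(Y)\subseteq D$; the inclusion $D\subseteq\Aut(Y)$ is immediate but should be said), and in the mixed case one should note that the two generators of $L$ are linearly independent, so the torsion-free criterion via $2\times 2$ minors applies. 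Your identification of the support condition $s_0^{v}=1$, with $v=\gcd\{d-j:j\in J\}$ equal to the invariant of Lemma~\ref{pl}, is exactly right. What your approach buys is uniformity and strictly more information: the torsion of $\ZZ^{m+1}/L$ is precisely the component group of the quasitorus $\Aut(Y)$, so the same Smith-form computation yields the full isomorphism type of $\Aut(Y)$ in every case (it mechanically resolves examples such as $\VV(y_1^4y_2^2-z^6-1)$, where assembling $(\HH\times\overline{\DD})/(\HH\cap\overline{\DD})$ by hand is error-prone), whereas the paper's case analysis only detects connectedness or its failure; conversely, the paper's route uses nothing beyond elementary facts about connected groups and avoids the structure theory of diagonalizable groups.
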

\begin{proof}
If there are indices $i$ and $j$ such that $k_i=k_j$, then $\Aut(Y)$ is not commutative. Further we assume, that all $k_i$ are different.
   
Let $P(z)=z^d$. If $\gcd(k_1,\ldots,k_m,d)=r>1$, then, by Lemma~\ref{ir}, $Y$ is reducible. There is automorphism $\psi$ of the form $y_i\mapsto y_i$, $z\mapsto \varepsilon z$, where $\varepsilon^d=1$, which nontrivially permutes irreducible components. But a torus is a connected group, which can not give a nontrivial permutation of irreducible components. Now let
$\gcd(k_1,\ldots,k_m,d)=1$. Denote $\gcd(k_1,\ldots,k_m)=q$. Then $\gcd(q,d)=1$. By Theorem~\ref{vtor}, $\Aut(Y)$ is generated by $\HH~\cong~(\KK^\times)^{m-1}\times\ZZ_{q}$ and $\overline{\DD}\cong\KK^\times$. There is $t_0\in\KK^\times$ such that $t_0^{k_1}=1$ and $t_0^{\frac{k_1}{q}}$ is a primitive $q$-root of unity. Therefore, $t_0^d$  is a primitive $q$-root of unity. Consider $\DD$-action on $Y$ given~by
$$t\cdot(y_1,\ldots,y_m,z)=(t^dy_1,y_2,\ldots,y_m,t^{k_1}z).$$
If we take $t=t_0$, then $z\mapsto z$, $y_1\mapsto t_0^dy_1$. This gives the action of torsion subgroup of $\HH$. Therefore, $\Aut(Y)$ is generated by $\overline{\DD}$ and $\TT$. Thus, $\Aut(Y)$ is a torus.

If $P(z)\neq z^d$ and $P(z)=z^uQ(z^v)$ for some $2\leq v\leq d$, then $\overline{\DD}$ is a finite nontrivial group, which nontrivial acts on $z$. Since $\HH$ preserves $z$, the $\Aut(Y)$-orbit of $z$ is finite. That is $\Aut(Y)$ is not a torus.

If $P(z)\neq z^d$ and $\gcd(k_1,\ldots,k_m)\neq 1$, then $\HH$ is not connected and $\overline{\DD}$ is finite. That is $\Aut(Y)$ is not a torus.

If there is no $v>1$ such that $P(z)=z^uQ(z^v)$ and $\gcd(k_1,\ldots,k_m)=1$, then $\HH=\TT$ is a torus and $\overline{\DD}\subset\HH$, see Lemma~\ref{pl}. 
Therefore, $\Aut(Y)=\TT$ is a torus.
\end{proof}

\begin{cor}\label{sb}
Let $Y=\VV(y_1^{k_1}\ldots y_m^{k_m}-P(z))$, $d=\deg P(z)\geq 2$, and for all~$i$ we have $k_i~\geq~2$. Then the group $\Aut(Y)$ is not solvable if and only if there are five indices $i_1,i_2,i_3,i_4,i_5\in\{1,\ldots,m\}$ such that $k_{i_1}=k_{i_2}=k_{i_3}=k_{i_4}=k_{i_5}$.
\end{cor}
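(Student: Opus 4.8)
The plan is to reduce solvability of $\Aut(Y)$ to solvability of the finite group $S(Y)$, exploiting the explicit semidirect product decomposition already obtained in Corollary~\ref{ct}, and then to translate the condition on $S(Y)$ into the combinatorial condition on the weights $k_i$.

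First I would dispose of the exceptional case $Y=\VV(y_1^2-z^2)$ excluded by Corollary~\ref{ct}. Here $m=1$, so no five weights can coincide, and it remains only to check that $\Aut(Y)$ is solvable. By the explicit description obtained in the proof of Theorem~\ref{vtor}, this group is a finite extension of the one-parameter group of matrices $\begin{pmatrix} a&b\\ b&a\end{pmatrix}$ with $a^2-b^2=1$, which is isomorphic to the abelian group $\KK^\times$; hence $\Aut(Y)$ is a finite extension of a one-dimensional abelian group and is solvable. This matches the claim, since there are no five coinciding weights in this case.

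For all remaining $Y$, Corollary~\ref{ct} gives an isomorphism $\Aut(Y)\cong S(Y)\rightthreetimes Q$, where $Q=(\HH\times\overline{\DD})/(\HH\cap\overline{\DD})$. I would first observe that $Q$ consists of diagonal automorphisms $y_i\mapsto\lambda_i y_i$, $z\mapsto\mu z$, so $Q$ is abelian; moreover $Q$ is normal in $\Aut(Y)$, because conjugation by a permutation from $S(Y)$ merely permutes the scalars $\lambda_i$ and leaves the automorphism diagonal. Thus there is a short exact sequence $1\to Q\to\Aut(Y)\to S(Y)\to 1$ with abelian kernel $Q$. I would then invoke the standard fact that in an extension with solvable (here abelian) kernel the whole group is solvable if and only if the quotient is: a quotient of a solvable group is solvable, and a solvable-by-solvable group is solvable. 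Consequently $\Aut(Y)$ is solvable if and only if $S(Y)$ is.

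Finally I would analyze $S(Y)\cong\mathrm{S}_{m_1}\times\cdots\times\mathrm{S}_{m_n}$, where $m_i$ is the number of indices $j$ with $k_j$ equal to the $i$-th distinct weight. A direct product of groups is solvable if and only if each factor is, and $\mathrm{S}_{m_i}$ is solvable precisely when $m_i\leq 4$, since for $m_i\geq 5$ it contains the simple nonabelian subgroup $\mathrm{A}_{m_i}$. Therefore $S(Y)$, and hence $\Aut(Y)$, is non-solvable if and only if some $m_i\geq 5$, that is, if and only if there are five indices $i_1,i_2,i_3,i_4,i_5$ with $k_{i_1}=k_{i_2}=k_{i_3}=k_{i_4}=k_{i_5}$, which is exactly the assertion. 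The argument has no deep obstacle, as the hard structural work was already carried out in Theorem~\ref{vtor} and Corollary~\ref{ct}; the only points requiring care are verifying that $Q$ is a \emph{normal} abelian subgroup, so that the extension criterion applies, and correctly matching non-solvability of a factor $\mathrm{S}_{m_i}$ with the existence of five coinciding weights.
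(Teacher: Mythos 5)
Your proof is correct and, for the main case, is essentially the paper's argument: the paper likewise reduces via Corollary~\ref{ct} to solvability of $S(Y)$, noting that $\Aut(Y)$ is an extension of the solvable group $S(Y)$ by the commutative quasitorus $(\HH\times\overline{\DD})/(\HH\cap\overline{\DD})$, and that five coinciding weights force $\mathrm{S}_5\subset S(Y)\subset\Aut(Y)$; your extra care in checking normality of $Q$ and using the quotient criterion is a sound (if slightly more explicit) version of the same reasoning. The one genuine divergence is the exceptional surface $Y=\VV(y_1^2-z^2)$: the paper argues geometrically, via the homomorphism $\Aut(Y)\to\mathrm{S}_2$ swapping the two line components and the solvability of $\Aut(\KK)\cong\KK^\times\rightthreetimes\KK$, whereas you use the explicit matrix description from Theorem~\ref{vtor}. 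Your route works, but your phrasing is slightly inaccurate: $\Aut(Y)$ is \emph{not} a finite extension of the one-parameter group $\left\{\begin{pmatrix} a&b\\ b&a\end{pmatrix}: a^2-b^2=1\right\}$ alone, since the additional quasitorus $\overline{\DD}\cong\KK^\times$ (acting by scalars $(y_1,z)\mapsto(sy_1,sz)$, as $P(z)=z^2$) is not contained in it; the identity component is two-dimensional. The repair is immediate: the scalars commute with the circulant matrices, so the subgroup they generate together is abelian, and $\HH\cong\ZZ_2$ normalizes it (conjugation sends $M_{a,b}$ to $M_{a,-b}=M_{a,b}^{-1}$), so $\Aut(Y)$ is abelian-by-finite and hence solvable, as needed. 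The paper's component argument avoids any explicit computation, while yours stays uniform with the structure theory of Theorem~\ref{vtor}; both are acceptable.
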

\begin{proof}
If such indices exist, then $S(Y)$ contains $\mathrm{S}_5$. Hence, $\Aut(Y)$ is not solvable.

If there are no such five indices, then $S(Y)$ is solvable. Therefore, if $Y\neq\VV(y_1^{2}-z^2)$, by Corollary~\ref{ct}, $\Aut(Y)$ is a semidirect product of a solvable group and a commutative one.

Now let $Y=\VV(y_1^{2}-z^2)$. Then $Y$ has two irreducible components $L_1$ and $L_2$. Each automorphism either preserves this components or swaps them. We obtain a homomorphism $\Aut(Y)\rightarrow\mathrm{S}_2$. Let $G$ be the kernel of this homomorphism. Then there is a homomorphism $f\colon G\rightarrow \Aut(L_1)$. The kernel $\Ker(f)$ is contained in $\Aut(L_2)$. The automorphism group of a line is isomorphic to $\KK^\times\rightthreetimes  \KK$, therefore, it is solvable. Hence, $\Aut(Y)$ is solvable.
\end{proof}

\begin{ex}
Let $Y=\VV(y_1^2y_2^3-z^4-az^2-bz)$. 

If $a=b=0$, then $Y$ satisfies condition 1) from Corollary~\ref{toraut}. Therefore,  $\Aut(Y)\cong (\KK^\times)^2$.

If $a=0$ and $b\neq 0$, then $z^4+bz=z(z^3+b)$. Hence, $\Aut(Y)\cong \ZZ_3\times(\KK^\times)$. 

If $a\neq 0$ and $b=0$, then $z^4+az^2=z^2(z^2+a)$. Therefore,
$\Aut(Y)\cong \ZZ_2\times(\KK^\times)$. 

If $a\neq 0$ and $b\neq 0$, then $Y$ satisfies condition 2) from Corollary~\ref{toraut}. Hence, $\Aut(Y)\cong \KK^\times$.
\end{ex}

Let us give an example, where $\Aut(Y)\ncong S(Y)\rightthreetimes (\HH\times\widehat{\DD})$ 
\begin{ex}
Let $Y=\VV(y_1^4y_2^2-z^6-1)$. Then $\HH\cong\KK^\times\times\ZZ_2$, $\overline{\DD}\cong\ZZ_{12}$, $S(Y)$ is trivial. 
The action of $\HH$ is given by 
$$(t,\varepsilon)\cdot (y_1,y_2,z)=(t y_1, t^{-2}\varepsilon y_2, z),\qquad \varepsilon=\pm 1.$$
The action of $\overline{\DD}$ is given by 
$$\eta\cdot (y_1,y_2,z)=(\eta^3 y_1, y_2, \eta^2 z), \qquad \eta^{12}=1.$$
We have $\HH\cap\overline{\DD}\cong\ZZ_2$ and $(\HH\times\overline{\DD})/(\HH\cap\overline{\DD})\cong \KK^\times\times \ZZ_{12}$. 

Therefore, 
$\Aut(X)\cong\KK^\times\times \ZZ_{12}\ncong\KK^\times\times \ZZ_{2}\times\ZZ_6$.
\end{ex}

\section{Automorphisms of $m$-suspensions over a line with the unique weight equal to one}\label{oo}

During this section let $Y=\VV(y_1y_2^{k_2}\ldots y_m^{k_m}-P(z))$ be an $m$-suspension with the unique weight equal to one and all other weights greater than one. Such variety is a particular case of a Danielewski variety. We give an explicit lists of generators of automorphism group and describe $\Aut(Y)$ as a semidirect product of its subgroups.

The variety $Y$ admits a nontrivial LND.
\begin{de}\label{cd}
Let us define the {\it canonical LND} $\widehat{\partial}$ on $Y$ by
$$
\widehat{\partial} (y_1)=P'(z), \qquad \widehat{\partial}(z)=y_2^{k_2}\ldots y_m^{k_m}, \qquad\widehat{\partial}(y_j)=0, j\geq 2.
$$ 
\end{de}

\begin{lem}\label{mlsul}
Let $Y=\VV(y_1y_2^{k_2}\ldots y_m^{k_m}-P(z))$, $d=\deg P(z)\geq 2$. Assume for all $i$ we have $k_i\geq 2$. Then 

1) $\mathrm{ML}(Y)=\KK[y_2, \ldots, y_m]$.

2) Every LND on $\KK[Y]$ has the form $h(y_2,\ldots,y_m)\widehat{\partial}$,

3) For every subgroup $\Lambda\cong(\KK^\times,\cdot)\subset\rm{Aut}(X)$, $\varphi\in\Lambda$, $j\geq 2$, there exists $\lambda\in\KK\setminus\{0\}$ such that $\varphi^*(y_j)=\lambda y_j$. 

4) For every $\psi\in\rm{Aut}(\KK[Y])$ and $j\geq 2$ there exist $\lambda_j\in\KK\setminus \{0\}$ and $p\in\{2,\ldots,m\}$ such that $\psi(y_j)=\lambda_jy_p$.

\end{lem}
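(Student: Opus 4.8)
The strategy is to realize $Y$ as an $m$-suspension over the line $X \cong \KK$ (via the function $f = P(z)$) with weights $1, k_2, \dots, k_m$, so that the machinery of Section~\ref{tre} applies. The distinguished index is $i = 1$, the variable with weight one. The key input is that the associated variety $Y_1$ over $\LL_1 = \overline{\KK(y_1)}$, defined by $\VV(y_2^{k_2}\ldots y_m^{k_m} - P(z))$, has all its remaining weights $k_2,\dots,k_m \geq 2$; hence by Lemma~\ref{r} it is rigid. (One must check that Lemma~\ref{r}, stated over $\KK$, holds verbatim over $\LL_1$ — the proof only uses that the base field is algebraically closed of characteristic zero, which $\LL_1$ is.) With $Y_1$ rigid in hand, Lemmas~\ref{ml}, \ref{osn}, and \ref{aut} are immediately applicable with $i=1$.

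For part (1) and (2): By Lemma~\ref{flza} every nonzero LND reduces to a nonzero $\TT$-homogeneous one, and by Lemma~\ref{osn} any such $\partial$ satisfies $\partial(y_j)=0$ for all $j\geq 2$ and $\partial(y_1)\neq 0$. Lemma~\ref{ml} then gives $y_j \in \mathrm{ML}(Y)$ for $j\geq 2$, so $\KK[y_2,\dots,y_m]\subseteq\mathrm{ML}(Y)$; the canonical LND $\widehat\partial$ witnesses that $z$ and $y_1$ are not in $\mathrm{ML}(Y)$, giving the reverse inclusion and proving (1). For (2), given an arbitrary LND $\partial$, reduce to the homogeneous case; since $\partial$ kills each $y_j$ ($j\geq 2$) and hence kills $\KK[y_2,\dots,y_m]=\mathrm{ML}(Y)$, the composite $\widehat\partial$ and $\partial$ both annihilate the $y_j$'s, and I would compare them on the generators $y_1, z$. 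Using $\partial(y_1^{k_1}\cdots) = 0$ (it is $\TT$-invariant, applying the product rule with $k_1=1$) together with the relation $y_1 y_2^{k_2}\cdots y_m^{k_m} = P(z)$, one shows $\partial(z)$ and $\partial(y_1)$ are forced to be $\KK[y_2,\dots,y_m]$-multiples matching $\widehat\partial$, i.e. $\partial = h\,\widehat\partial$ with $h\in\KK[y_2,\dots,y_m]$; one verifies $h$ lies in the kernel so the replica is a genuine LND.

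Part (3) is Lemma~\ref{aut} applied directly with $i=1$: for $\Lambda\cong\KK^\times$ and $\varphi\in\Lambda$, each $y_j$ with $j\geq 2$ is a $\Lambda$-semi-invariant, so $\varphi^*(y_j)=\lambda y_j$.

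Part (4) is where the main work lies, and I expect it to be the principal obstacle. The point is to pass from continuous semisimple actions (part 3) to an arbitrary automorphism $\psi$. I would argue as follows: since $\mathrm{ML}(Y)$ is invariant under all of $\Aut(\KK[Y])$ (it is defined intrinsically as the intersection of kernels of all LNDs), $\psi$ restricts to an automorphism of $\KK[y_2,\dots,y_m]$. The subtlety is that an automorphism of a polynomial ring need not permute the variables up to scalars in general, so rigidity alone is not enough; I must use the finer structure coming from the $\TT$-action and the weight/divisibility arguments. The cleanest route mirrors the nontoric part of the proof of Theorem~\ref{vtor}: identify the $y_j$ ($j\geq 2$) as (up to scalar) the indecomposable $\TT$-semi-invariants of the appropriate weights inside $\mathrm{ML}(Y)$, invoke Lemma~\ref{inv} to rule out nonconstant units, and use that $\psi$ must carry indecomposable $\TT$-semi-invariants to indecomposable $\TT$-semi-invariants while preserving the weight monoid structure. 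This forces $\psi(y_j)=\lambda_j y_p$ for some $p\in\{2,\dots,m\}$ and $\lambda_j\in\KK\setminus\{0\}$. The delicate step is establishing that $\psi$ normalizes the relevant torus action so that the semi-invariant argument applies; here I would again appeal to the uniqueness of the maximal torus for the rigid-type structure (as in \cite{AG}, \cite{FZ}) restricted to the sub-variety cut out by the invariant ring, or argue directly that $\psi$ conjugates the canonical LND to another LND, which by part (2) is again a replica $h\widehat\partial$, pinning down the image of $z$ and $y_1$ and thereby constraining the images of the $y_j$.
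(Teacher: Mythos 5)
Your reductions for parts (1) and (3) are correct and coincide with the paper's route: $Y_1$ is rigid by Lemma~\ref{r} applied over $\LL_1$ (the paper uses this verbatim), and then Lemma~\ref{ml} gives $y_j\in\mathrm{ML}(Y)$ while Lemma~\ref{aut} with $i=1$ gives part (3); the reverse inclusion in (1) is simply $\mathrm{ML}(Y)\subset\Ker\widehat{\partial}=\KK[y_2,\ldots,y_m]$. However, your argument for part (2) contains a false identity and omits the one step that actually carries the proof. You assert $\partial(y_1y_2^{k_2}\ldots y_m^{k_m})=0$ because the monomial is $\TT$-invariant; this is wrong --- a derivation need not kill $\TT$-invariants, and indeed the canonical LND itself gives $\widehat{\partial}(y_1y_2^{k_2}\ldots y_m^{k_m})=\widehat{\partial}(P(z))=P'(z)\,y_2^{k_2}\ldots y_m^{k_m}\neq 0$. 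What the relation and $\partial(y_j)=0$, $j\geq 2$, actually yield is $\partial(y_1)\,y_2^{k_2}\ldots y_m^{k_m}=P'(z)\partial(z)$, and from this alone you cannot conclude $y_2^{k_2}\ldots y_m^{k_m}\mid\partial(z)$: the ring $\KK[Y]$ is not a UFD, and when $P$ has multiple roots $P'(z)$ vanishes on components of $\{y_j=0\}\cap Y$, so $P'(z)$ cannot be cancelled. The paper obtains the divisibility from the grading $\mathfrak{h}_{1j}$ ($\deg y_1=-k_j$, $\deg y_j=1$): the lowest homogeneous component $\partial_l$ is an LND killing all $y_j$, $j\geq 2$, hence $\partial_l(y_1)\neq 0$, hence $y_1\nmid\partial_l(y_1)$; since homogeneous elements of negative $\mathfrak{h}_{1j}$-degree are divisible by $y_1$, this forces $l\geq k_j$, so every component of $\partial(z)$ has degree $\geq k_j$ and $y_j^{k_j}\mid\partial(z)$ for each $j$. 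Only then does $\partial=h\widehat{\partial}$ with $h\in\Ker\widehat{\partial}$ follow. This degree bound $l\geq k_j$ is the missing idea in your sketch.

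For part (4), which you correctly single out as the crux, your primary fix fails and your fallback is insufficient. You cannot appeal to the uniqueness of the maximal torus from \cite{AG} or \cite{FZ}: those results concern \emph{rigid} varieties, and $Y$ here is not rigid --- it carries $\widehat{\partial}$ and the large group $\UU(\widehat{\partial})$ --- so no normalization statement for $\TT$ is available this way; and restricting attention to $\mathrm{ML}(Y)=\KK[y_2,\ldots,y_m]$ does not help, since $\psi$ restricted there is a priori an arbitrary polynomial automorphism, and irreducibility alone permits images like $y_2+y_3^2$. Your alternative --- conjugating $\widehat{\partial}$ by $\psi$ and invoking part (2) --- only recovers invariance of $\mathrm{ML}(Y)$ and of the filtration, not the scalar-monomial form of $\psi(y_j)$. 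The paper's actual mechanism is a short trick you did not state: part (3) holds for \emph{every} subgroup $\Lambda\cong\KK^\times$ of $\Aut(Y)$, in particular for the $\psi$-conjugates of one-parameter subgroups of $\TT$; unwinding $\psi\Lambda\psi^{-1}$ shows that $\psi(y_j)$ is automatically a $\TT$-semi-invariant, with no need for $\psi$ to normalize $\TT$. Since the characters by which $\TT$ acts on $y_2,\ldots,y_m$ are linearly independent, the $\TT$-semi-invariants inside $\KK[y_2,\ldots,y_m]$ are exactly scalar multiples of monomials, and indecomposability then forces $\psi(y_j)=\lambda_j y_p$.
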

\begin{proof} {\bf 1)} By Lemma~\ref{ir}, $Y$ is irreducible.
If $m=1$, we have $Y\cong\KK$, hence $\mathrm{ML}(Y)=\KK$.

Let $m\geq 2$. Then $Y_1$ is rigid by Lemma~\ref{r}. Let us fix $j\in\{2,\ldots,m\}$. Consider $\ZZ$-grading $\mathfrak{h}_{1j}$ of $\KK[Y]$, $\deg y_1=-k_j$, $\deg y_j=1$, degrees of all the other variables equal zero. Let $\delta$ be a $\ZZ$-homogeneous LND. Then by Lemma~\ref{osn}(3) $\deg \delta>0$. Let $\partial$ be a nontrivial LND of~$\KK[Y]$. Then $\partial=\sum_{i=l}^k\partial_i$, where $\partial_i$ is homogeneous of degree~$i$. By Lemma~\ref{fl}(1), $\partial_l$ is a nontrivial LND of $\KK[Y]$. Hence, $l>0$. Therefore, for all $i$ we obtain $\deg \partial_i(y_j)>0$, that is $y_j\mid\partial_i(y_j)$. Hence, $y_j\mid\partial(y_j)$. This implies $\partial(y_j)=0$, i.e. $y_j\in\mathrm{ML}(Y)$.

Let us consider the canonical LND $\widehat{\partial}$. We have $\Ker \widehat{\partial}=\KK[y_2,\ldots,y_m]$. Then we obtain $\mathrm{ML}(Y)~\subset~\KK[y_2, \ldots, y_m]$.
Therefore, $\mathrm{ML}(Y)=\KK[y_2, \ldots, y_m]$.

{\bf 2)}  
Again let us consider the  $\ZZ$-grading $\mathfrak{h}_{1j}$ of $\KK[Y]$. For every nontrivial LND $\partial=\sum_{i=l}^k\partial_i$ we have, that $\partial_l$ is a nontrivial LND. In 1) we have proved, that for all $j\geq 2$ we have $\partial_l(y_j)=0$. This implies $\partial_l(y_1)\neq 0$. Therefore, $y_1\nmid\partial_l(y_1)$. Hence, $\deg(\partial_l(y_1))\geq 0$. That is $l\geq k_j$. Therefore, $y_j^{k_j}\mid\partial(z)$. So, $(y_2^{k_2}\ldots y_m^{k_m})\mid\partial(z)$. We have $\partial(z)=h\widehat{\partial}(z)$ for some $h\in\KK[Y]$. Obtain 
$$
\partial(y_1)=\partial\left(\frac{P(z)}{y_2^{k_2}\ldots y_m^{k_m}}\right)=\frac{P'(z)\partial(z)}{y_2^{k_2}\ldots y_m^{k_m}}=\frac{P'(z)h\widehat{\partial}(z)}{y_2^{k_2}\ldots y_m^{k_m}}=
h\widehat{\partial}(y_1).
$$ 

Since for all $ i\geq 2$ we have $\partial(y_i)=0=h\widehat{\partial}(y_i)$, we obtain 
$\partial=h\widehat{\partial}$. Since $\partial$ is an LND, $h\in\Ker\widehat{\partial}=\KK[y_2, \ldots, y_m]$, see~\cite[Principle~7]{Fr}. We obtain, that $Y$ is almost rigid. 

{\bf 3)} Let $\delta$ be the semisimple derivation corresponding to $\Lambda$. Then $\delta=\sum_{i=l}^k\delta_i$.  If $l<0$, then $\delta_l$ is an LND. Hence, $l\geq 0$. This implies 
$y_j\mid\delta_i(y_j)$. Therefore, $y_j\mid\delta(y_j)$. By Lemma~\ref{semisimp},  there exists $\lambda\in\KK\setminus\{0\}$ such that $\varphi^*(y_j)=\lambda y_j$.

{\bf 4)} Let $\psi\in\rm{Aut}(\KK[Y])$. Then $\psi(y_j)$, $j\geq 2$, is an indecomposable element of $\rm{ML}(Y)$ which is semi-invariant with respect to all 
$$\Lambda\cong(\KK^\times,\cdot)\subset\rm{Aut}(Y).$$ 
In particularly, $\psi(y_j)$ is $\TT$-semi-invariant. Therefore, there exist $\lambda_j$ in $\KK\setminus \{0\}$ and $p$ in $\{2,\ldots,m\}$ such that $\psi(y_j)=\lambda_jy_p$.
\end{proof}

For the variety $Y=\VV(y_1y_2^{k_2}\ldots y_m^{k_m}-P(z))$ the proper quasitorus $\HH$ coincides with the torus $\TT$. Let us call it the {\it proper torus} of $Y$. We have $\overline{\DD}=\DD$. If $P(z)=z^d$, then $\DD$ is a torus acting by $t\cdot(y_1, y_2,\ldots,y_m,z)=(t^d y_1, y_2,\ldots, y_m, tz)$. If $P(z)=z^uQ(z^v)\neq z^d$, and $v$ is maximal with this property, then $\DD\cong\ZZ_v$ acts by $\varepsilon\cdot(y_1,\ldots,y_m,z)=(y_1,\ldots,y_m, \varepsilon z)$, where $\varepsilon^v=1$, see Lemma~\ref{pl}.

If $m=1$, then $Y\cong\KK$. Further we assume $m\geq 2$.

\begin{prop}\label{drep}
Let $Y=\VV(y_1y_2^{k_2}\ldots y_m^{k_m}-P(z))$, $d=\deg P(z)\geq 2$, $m\geq 2$. Assume that for all~$i\geq 2$ we have $k_i\geq 2$. Let $P(z)$ be a monic polynomial with zero coefficient of~$z^{d-1}$. Then the group $\Aut(Y)$ is generated by 

1) the proper torus $\TT$ of $Y$;

2) the big unipotent subgroup $\UU(\widehat{\partial})$ corresponding to the canonical LND on $Y$;

3) the additional quasitorus $\DD$ of $Y$;

4) the symmetric group $S(Y)$ of $Y$.
\end{prop}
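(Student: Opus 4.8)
The plan is to show that an arbitrary $\psi\in\Aut(\KK[Y])$ is a product of elements of the four listed subgroups, by first pinning down its action on $y_2,\dots,y_m$ and then on $z$.

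First I would use Lemma~\ref{mlsul}(4): $\psi$ permutes the variables $y_2,\dots,y_m$ up to scalars, $\psi(y_j)=\lambda_j y_{\sigma(j)}$ for $j\geq 2$, with $\lambda_j\in\KK\setminus\{0\}$ and $\sigma$ a permutation of $\{2,\dots,m\}$. Since $\psi$ carries $\mathrm{ML}(Y)=\KK[y_2,\dots,y_m]$ isomorphically onto itself, it induces an automorphism of the fraction field $K=\KK(y_2,\dots,y_m)$, and it extends to the localization $\KK[Y][M^{-1}]=K[z]$, where $M=y_2^{k_2}\cdots y_m^{k_m}$ and $y_1=P(z)/M$. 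As a $K$-algebra automorphism of $K[z]$ lying over the field automorphism $\psi|_K$, it must have the form $\psi(z)=\alpha z+\beta$ with $\alpha\in K^\times$ and $\beta\in K$.

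The main point — and the step I expect to be the crux — is to prove that $\alpha$ is a nonzero constant. For this I would use an explicit basis: $\KK[Y]$ is free over $\KK[y_2,\dots,y_m]$ with basis $\{\,y_1^{a}z^{i}\mid a\geq 0,\ 0\leq i\leq d-1\,\}$, because the relation $y_1M=P(z)$ rewrites $z^{d}$ in terms of $y_1$ and lower powers of $z$. The key combinatorial fact is that the $z$-degrees $da+i$ of these basis monomials are pairwise distinct and exhaust $\ZZ_{\geq 0}$, so every element of $\KK[Y]$ of $z$-degree $\leq 1$ lies in $\KK[y_2,\dots,y_m]\oplus\KK[y_2,\dots,y_m]z$. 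Applying this to $\psi(z)=\alpha z+\beta$ gives $\alpha,\beta\in\KK[y_2,\dots,y_m]$, and applying it to $\psi^{-1}(z)=\alpha^{-1}z-\alpha^{-1}\beta$ gives $\alpha^{-1}\in\KK[y_2,\dots,y_m]$ as well, whence $\alpha\in\KK\setminus\{0\}$.

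With $\alpha$ constant the remainder is bookkeeping. Regularity of $\psi(y_1)=P(\alpha z+\beta)/\psi(M)$ forces, after reducing $z^{d}$ in the basis, that the monomial $\psi(M)=\mathrm{const}\cdot\prod_j y_j^{k_{\sigma^{-1}(j)}}$ divides $\alpha^{d}M$; since $\sigma$ is a permutation and the total degrees agree, this yields $k_{\sigma(j)}=k_j$ for all $j$, i.e.\ $\sigma\in S(Y)$. Composing $\psi$ with the element of $S(Y)$ realizing $\sigma^{-1}$ and then with an element of the proper torus $\TT$ (which acts as the full diagonal torus on $y_2,\dots,y_m$ and fixes $z$), I may assume $\psi(y_j)=y_j$ for all $j\geq 2$, still with $\psi(z)=\alpha z+\beta$, $\alpha\in\KK^\times$, $\beta\in\KK[y_2,\dots,y_m]$. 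Since $P$ has zero coefficient of $z^{d-1}$, the coefficient of $z^{d-1}$ in $P(\alpha z+\beta)$ is $d\alpha^{d-1}\beta$, so divisibility of $P(\alpha z+\beta)$ by $M$ forces $M\mid\beta$, say $\beta=gM$ with $g\in\KK[y_2,\dots,y_m]$; reducing the divisibility modulo $M$ then gives $P(\alpha z)=\alpha^{d}P(z)$, i.e.\ $\alpha\in\DD$. Composing with the element of $\DD$ sending $z\mapsto\alpha^{-1}z$ reduces to $\psi(z)=z+g'M=\exp(g'\widehat{\partial})(z)$ for some $g'\in\KK[y_2,\dots,y_m]=\Ker\widehat{\partial}$ (using $\widehat{\partial}(z)=M$, $\widehat{\partial}(M)=0$); as this automorphism and $\exp(g'\widehat{\partial})$ agree on $y_2,\dots,y_m$ and on $z$, and $y_1=P(z)/M$ is determined by these, they coincide. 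Hence $\psi$ lies in $\UU(\widehat{\partial})$ after peeling off the factors from $S(Y)$, $\TT$ and $\DD$, which proves that these four subgroups generate $\Aut(Y)$.
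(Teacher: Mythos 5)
Your proposal is correct, and it follows the paper's overall skeleton --- normalize $\psi$ on $y_2,\dots,y_m$ via Lemma~\ref{mlsul}(4), show $\psi(z)=\alpha z+\beta$ with $\alpha\in\KK^\times$ and $\beta\in\KK[y_2,\dots,y_m]$, extract divisibility conditions from the regularity of $\psi(y_1)$, and peel off $S(Y)$, $\TT$, $\DD$, $\UU(\widehat{\partial})$ --- but your mechanism for the crux step is genuinely different. The paper obtains $\psi(z)=a(y_2,\ldots,y_m)z+b(y_2,\ldots,y_m)$ from the classification of LNDs in Lemma~\ref{mlsul}(2): since every LND is $h\widehat{\partial}$, one has $\partial^2(z)=0$ for all LNDs, and conjugating by $\psi$ gives $\widehat{\partial}^2(\psi(z))=0$; it then asserts tersely that $a\in\KK^\times$ ``since $\psi$ is an isomorphism''. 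You avoid Lemma~\ref{mlsul}(2) entirely at this point, instead passing to the semilinear automorphism of $K[z]$, $K=\KK(y_2,\dots,y_m)$, to get the affine form, and using the free $\KK[y_2,\dots,y_m]$-module basis $\{y_1^az^i\}$ with pairwise distinct $z$-degrees $da+i$ both to force $\alpha,\beta\in\KK[y_2,\dots,y_m]$ and, via $\psi^{-1}$, to force $\alpha\in\KK^\times$. In fact your basis argument supplies exactly the details the paper elides: both the implication $\widehat{\partial}^2f=0\Rightarrow f\in\KK[y_2,\dots,y_m]\oplus\KK[y_2,\dots,y_m]z$ and the constancy of $a$ ultimately rest on that free-module structure, so your write-up is the more self-contained of the two, at the cost of some commutative-algebra bookkeeping. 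Your derivation of $k_{\sigma(j)}=k_j$ from $\psi(M)\mid\alpha^d M$ (the top basis coefficient) replaces the paper's substitution $z=0$ into the divisibility relation, and you peel off $\DD$ before the unipotent factor while the paper does the reverse --- both immaterial. Two harmless slips to note: $\KK[Y][M^{-1}]$ is the Laurent ring $\KK[y_2^{\pm1},\dots,y_m^{\pm1},z]$, which embeds in $K[z]$ but does not equal it (you should invert all of $\KK[y_2,\dots,y_m]\setminus\{0\}$, which is legitimate since $\psi$ preserves this set); and by semilinearity $\psi^{-1}(z)=\psi^{-1}(\alpha^{-1})z-\psi^{-1}(\alpha^{-1}\beta)$ rather than literally $\alpha^{-1}z-\alpha^{-1}\beta$, but since $\psi$ restricts to an automorphism of $\KK[y_2,\dots,y_m]$ this still yields $\alpha^{-1}\in\KK[y_2,\dots,y_m]$ and hence $\alpha\in\KK^\times$, as you claim.
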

\begin{proof}
By Lemma~\ref{mlsul} (2) every LND on $\KK[Y]$ has the form $h(y_2,\ldots,y_m)\widehat{\partial}$. Therefore, for every LND~$\partial$ we have $\partial^2(z)=0$. Let $\psi\in\Aut(\KK[Y])$. 
Then for every LND~$\partial$ we have $\partial^2(\psi(z))=0$.  Hence, $\widehat{\partial}^2(\psi(z))=0$. This implies $\psi(z)=a(y_2,\ldots,y_m)z+b(y_2,\ldots,y_m)$. Since $\psi$ is isomorphism we obtain $a\in\KK^\times$.
We have 
\begin{multline*}
\psi(y_1y_2^{k_2}\ldots y_m^{k_m})=\psi(P(z))=P(az+b(y_2,\ldots, y_m))=\\
=a^dP(z)+da^{d-1}b(y_2,\ldots, y_m)z^{d-1}+\Delta(y_2,\ldots, y_m, z),
\end{multline*}
where $\deg_z\Delta(y_2,\ldots, y_m, z)\leq d-2$.

By Lemma~\ref{mlsul} (4),  for every $\psi\in\Aut(\KK[Y])$ there is a permutation $\sigma\in \rm{S}_{m-1}$ and $\lambda_2,\ldots,\lambda_m\in\KK\setminus \{0\}$ such that $\psi(y_i)=\lambda_iy_{\sigma(i)}$, $i\geq 2$.

We have
\begin{multline*}
\psi(y_1)=\frac{\psi(y_1y_2^{k_2}\ldots y_m^{k_m})}{\psi(y_2^{k_2}\ldots y_m^{k_m})}=\frac{P(az+b(y_2,\ldots,y_m))}{\lambda_2^{k_2}\ldots \lambda_m^{k_m}y_{\sigma(2)}^{k_2}\ldots y_{\sigma(m)}^{k_m}}=\\
=\frac{a^dP(z)+da^{d-1}b(y_2,\ldots, y_m)z^{d-1}+\Delta(y_2,\ldots, y_m, z)}{\lambda_2^{k_2}\ldots \lambda_m^{k_m}y_{\sigma(2)}^{k_2}\ldots y_{\sigma(m)}^{k_m}}=\\
=\frac{a^dy_1y_2^{k_2}\ldots y_m^{k_m}+da^{d-1}b(y_2,\ldots, y_m)z^{d-1}+\Delta(y_2,\ldots, y_m, z)}{\lambda_2^{k_2}\ldots \lambda_m^{k_m}y_{\sigma(2)}^{k_2}\ldots y_{\sigma(m)}^{k_m}}.
\end{multline*}
In the last quotient the numerator does not contain $z^k$, $k\geq d$. Therefore, we obtain, that in $\KK[y_1,\ldots,y_m,z]$ the following accurs
\begin{equation}\label{eq}
\prod_{i=2}^my_{\sigma(i)}^{k_i}\mid(a^dy_1y_2^{k_2}\ldots y_m^{k_m}+da^{d-1}b(y_2,\ldots, y_m)z^{d-1}+\Delta(y_2,\ldots, y_m, z)).
\end{equation}
Therefore, 
$$
y_{\sigma(2)}^{k_2}\ldots y_{\sigma(m)}^{k_m}\mid b(y_2,\ldots, y_m).
$$
If we put $z=0$ in (\ref{eq}), we obtain 
$$
\prod_{i=2}^my_{\sigma(i)}^{k_i}\mid(a^dy_1y_2^{k_2}\ldots y_m^{k_m}+P(0)(1-a^d)+b(y_2,\ldots,y_m)g(y_2,\ldots,y_m)).
$$
This implies $P(0)(1-a^d)=0$ and $\prod_{i=2}^my_{\sigma(i)}^{k_i}\mid y_1y_2^{k_2}\ldots y_m^{k_m}$. Therefore, $k_{\sigma(i)}=k_i$. 
Let us consider the automorphism  $\alpha\in S(Y)$ given by $\alpha(y_1)=y_1,\ \alpha(z)=z,\ \alpha(y_j)=y_{\sigma^{-1}(j)}.$ 
Then $(\alpha\circ\psi)(y_j)=\lambda_jy_j$. 

We can take $\tau\in \TT$ such that 
$$
(\tau\circ\alpha\circ\psi)(y_j)=y_j,\ j\geq 2,\qquad (\tau\circ\alpha\circ\psi)(z)=az+b'(y_2,\ldots, y_m),$$
where $b'(y_1,\ldots, y_m)=b(\tau(y_1),\tau(y_{\sigma^{-1}(2)}),\ldots,\tau(y_{\sigma^{-1}(m)}))$.

Since 
$$
y_{2}^{k_2}\ldots y_m^{k_m}\mid b'(y_2,\ldots, y_m),
$$
we can denote $b'(y_2,\ldots, y_m)=y_{2}^{k_2}\ldots y_m^{k_m}h(y_2,\ldots,y_m)$.
Denote $\omega=\exp(-h\widehat{\partial})$. Then $(\omega\circ\tau\circ\alpha\circ\psi)=\nu$, $\nu(y_j)=y_j$, $j\geq 2$, $\nu(z)=az$.

We have $y_2^{k_2}\ldots y_m^{k_m}\mid P(az)$. Therefore, $P(az)=a^dP(z)$. Hence, $\nu(y_1)=a^dy_1$. 
That is~$\nu\in\DD$.
\end{proof}

For the variety $Y=\VV(y_1y_2^{k_2}\ldots y_m^{k_m}-P(z))$, we have $\TT\cap\DD=\{\mathrm{id}\}$. Therefore, we obtain the following theorem.

\begin{theor}\label{vtt}
Let $Y=\VV(y_1y_2^{k_2}\ldots y_m^{k_m}-P(z))$, $d=\deg P(z)\geq 2$, $m\geq 2$. Assume that for all~$i\geq 2$ we have $k_i\geq 2$. Let $P(z)$ be a monic polynomial with zero coefficient of $z^{d-1}$. Let $\TT$ be the proper torus of $Y$, $\DD$ be the additional quasitorus of $Y$, $S(Y)$ be the symmetric group of $Y$, and $\UU(\widehat{\partial})$ be the big unipotent subgroup corresponding to the canonical LND $\widehat{\partial}$ on $Y$. Then the automorphism group of $Y$ is isomorphic~to
$$S(Y)\rightthreetimes\left( \left(\TT\times\DD\right)\rightthreetimes \UU(\widehat{\partial})\right).$$
\end{theor}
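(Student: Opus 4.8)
The plan is to upgrade the generation statement of Proposition~\ref{drep} into the asserted semidirect-product decomposition. Concretely, I would exhibit $\UU(\widehat{\partial})$ as a subgroup that is normal in all of $\Aut(Y)$, show that $\TT\times\DD$ is a complement to it inside a block $N=(\TT\times\DD)\rightthreetimes\UU(\widehat{\partial})$, and finally realize $S(Y)$ as a complement to $N$ via an explicit homomorphism $\Phi\colon\Aut(Y)\to S(Y)$ whose kernel is $N$.

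First I would record that $\UU(\widehat{\partial})$ is normal in $\Aut(Y)$. By Lemma~\ref{mlsul}(2) every LND on $\KK[Y]$ is a replica $h(y_2,\ldots,y_m)\widehat{\partial}$ of the canonical one, so $Y$ is almost rigid and $\SAut(Y)=\UU(\widehat{\partial})$. Since the conjugate of any $(\KK,+)$-subgroup of $\Aut(Y)$ by an automorphism is again a $(\KK,+)$-subgroup, $\SAut(Y)$ is normal; hence so is $\UU(\widehat{\partial})$.

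Next I would assemble the block $N$. The groups $\TT$ and $\DD$ are diagonal quasitorus actions that commute (both act diagonally, $\TT$ fixing $z$ and $\DD$ scaling $y_1$ and $z$) and satisfy $\TT\cap\DD=\{\mathrm{id}\}$, so $\langle\TT,\DD\rangle\cong\TT\times\DD$. A nontrivial element of $\UU(\widehat{\partial})$ is unipotent whereas every element of $\TT\times\DD$ is semisimple, forcing $(\TT\times\DD)\cap\UU(\widehat{\partial})=\{\mathrm{id}\}$. The normal form produced in the proof of Proposition~\ref{drep} shows that any $\psi$ fixing every $y_j$ ($j\geq 2$) up to a scalar can be written $\psi=\tau^{-1}\circ\omega^{-1}\circ\nu$ with $\tau\in\TT$, $\omega\in\UU(\widehat{\partial})$, $\nu\in\DD$; reordering using the normality of $\UU(\widehat{\partial})$ gives $\psi\in(\TT\times\DD)\cdot\UU(\widehat{\partial})$. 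Thus $N:=(\TT\times\DD)\cdot\UU(\widehat{\partial})$ is a subgroup in which $\UU(\widehat{\partial})$ is normal with complement $\TT\times\DD$, i.e.\ $N\cong(\TT\times\DD)\rightthreetimes\UU(\widehat{\partial})$.

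Finally I would split off $S(Y)$. By Lemma~\ref{mlsul}(4) each $\psi\in\Aut(Y)$ permutes $y_2,\ldots,y_m$ up to scalars, and the proof of Proposition~\ref{drep} shows this permutation preserves the weights, hence lies in $S(Y)$; sending $\psi$ to this permutation defines a homomorphism $\Phi\colon\Aut(Y)\to S(Y)$. It is surjective since $S(Y)\subset\Aut(Y)$ maps onto itself, and its kernel is precisely the set of automorphisms fixing each $y_j$ up to a scalar, which by the previous paragraph equals $N$; in particular $N$ is normal. Since a permutation in $\ker\Phi$ is trivial, $S(Y)\cap N=\{\mathrm{id}\}$, so $S(Y)$ is a complement and $\Aut(Y)=S(Y)\rightthreetimes N$. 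Combining the two decompositions yields the claimed isomorphism. The one point requiring real care is the identification $\ker\Phi=N$, where the explicit normal form from Proposition~\ref{drep} together with the normality of $\UU(\widehat{\partial})$ does the essential work; the remaining verifications (commutativity, the trivial intersections, and the homomorphism property of $\Phi$) are routine.
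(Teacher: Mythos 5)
Your proposal is correct and is essentially the intended argument: the paper deduces Theorem~\ref{vtt} directly from Proposition~\ref{drep}, the remark that $\TT\cap\DD=\{\mathrm{id}\}$, and the normality of $\SAut(Y)=\UU(\widehat{\partial})$ (coming from almost rigidity, Lemma~\ref{mlsul}(2)). You have simply made explicit the routine verifications the paper leaves implicit — the block $N=(\TT\times\DD)\rightthreetimes\UU(\widehat{\partial})$, the homomorphism onto $S(Y)$, and the kernel identification via the normal form from Proposition~\ref{drep} — all in accordance with the paper's reasoning.
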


\begin{cor}
Let $Y=\VV(y_1y_2^{k_2}\ldots y_m^{k_m}-P(z))$, $d=\deg P(z)\geq 2$, $m\geq 2$. Assume that for all~$i\geq 2$ we have $k_i\geq 2$. If all $k_i$ are different and there is no $v>1$ such that $P(z)=z^uQ(z^v)$, then $\Aut(Y)$ is a semidirect product of $m-1$-dimensional torus and a commutative group $\UU(\widehat{\partial})$.
\end{cor}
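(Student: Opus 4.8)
The plan is to read off the statement directly from Theorem~\ref{vtt}, which already supplies the isomorphism
$$
\Aut(Y)\cong S(Y)\rightthreetimes\left(\left(\TT\times\DD\right)\rightthreetimes\UU(\widehat{\partial})\right),
$$
and then to check that under the present hypotheses the two ``decorating'' factors $S(Y)$ and $\DD$ both collapse to the trivial group, leaving only $\TT\rightthreetimes\UU(\widehat{\partial})$.

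First I would dispose of the symmetric group. Recall that $S(Y)\cong\mathrm{S}_{m_1}\times\cdots\times\mathrm{S}_{m_n}$, where each factor permutes exactly those variables $y_j$ sharing a common weight $k_j$. By assumption the weights $k_1=1,k_2,\ldots,k_m$ are pairwise distinct, so every block size $m_i$ equals $1$; hence each $\mathrm{S}_{m_i}$ is trivial and $S(Y)=\{\mathrm{id}\}$. Next I would show $\DD=\{\mathrm{id}\}$ via Lemma~\ref{pl}. The hypothesis that there is no $v>1$ with $P(z)=z^uQ(z^v)$ in particular rules out $P(z)=z^d$: writing $z^d=z^0\,Q(z^d)$ with $Q(w)=w$ would exhibit $v=d\geq 2>1$. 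Thus $P(z)\neq z^d$, and the maximal $v$ with $P(z)=z^uQ(z^v)$ is $v=1$. Since in this section $\DD=\overline{\DD}$ and $k_1=1$, Lemma~\ref{pl} gives $\DD\cong\ZZ_{vk_1}=\ZZ_1$, i.e. $\DD$ is trivial.

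Substituting $S(Y)=\{\mathrm{id}\}$ and $\DD=\{\mathrm{id}\}$ into the decomposition collapses it to $\Aut(Y)\cong\TT\rightthreetimes\UU(\widehat{\partial})$. To finish I would record the two claimed structural features. The proper torus $\TT$ of $Y$ is the $(m-1)$-dimensional torus appearing in the stabilizer of $y_1y_2^{k_2}\cdots y_m^{k_m}$ inside the diagonal $m$-torus; since $\gcd(k_1,\ldots,k_m)=1$ (as $k_1=1$) the finite part of $\HH$ vanishes, so $\HH=\TT$ has dimension exactly $m-1$. The factor $\UU(\widehat{\partial})$ is commutative, being by definition the big unipotent subgroup $\{\exp(f\widehat{\partial})\mid f\in\Ker\widehat{\partial}\}$ of the canonical LND. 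This yields precisely the asserted semidirect product of the $(m-1)$-dimensional torus with a commutative group.

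Since the statement is an immediate specialization of Theorem~\ref{vtt}, there is essentially no genuine obstacle. The only point demanding a moment of care is the $\DD$-computation: one must notice that the condition on $P$ simultaneously excludes the toric case $P(z)=z^d$ (where Lemma~\ref{pl} would instead produce a one-dimensional torus $\DD\cong\KK^\times$) and forces the maximal $v$ to equal $1$, so that Lemma~\ref{pl} applies in its finite branch and returns the trivial group.
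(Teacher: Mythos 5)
Your proposal is correct and follows exactly the route the paper intends: the corollary is stated without proof precisely because it is the immediate specialization of Theorem~\ref{vtt}, with $S(Y)$ trivial since all $k_i$ are distinct and $\DD$ trivial by Lemma~\ref{pl} (noting $\gcd(k_1,\ldots,k_m)=1$ forces $\HH=\TT$ of dimension $m-1$). Your extra care in checking that the hypothesis on $P$ also excludes the toric case $P(z)=z^d$, where $\DD$ would be a one-dimensional torus, is exactly the right point to verify.
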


\begin{cor}
Let $Y=\VV(y_1y_2^{k_2}\ldots y_m^{k_m}-P(z))$, $d=\deg P(z)\geq 2$, $m\geq 2$. Assume that for all~$i\geq 2$ we have $k_i\geq 2$. Then $\Aut(X)$ is not commutative.
\end{cor}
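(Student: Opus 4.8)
The plan is to read off the non-commutativity directly from the semidirect product decomposition of Theorem~\ref{vtt},
$$\Aut(Y)\cong S(Y)\rightthreetimes\left(\left(\TT\times\DD\right)\rightthreetimes\UU(\widehat{\partial})\right),$$
by showing that the proper torus $\TT$ does not centralize the big unipotent subgroup $\UU(\widehat{\partial})$. Since $m\geq 2$, the kernel $\Ker\widehat{\partial}=\KK[y_2,\ldots,y_m]$ is strictly larger than $\KK$, so $\UU(\widehat{\partial})=\{\exp(f\widehat{\partial})\mid f\in\Ker\widehat{\partial}\}$ is nontrivial; moreover $f\mapsto\exp(f\widehat{\partial})$ is a group isomorphism $(\KK[y_2,\ldots,y_m],+)\to\UU(\widehat{\partial})$, because all replicas $f\widehat{\partial}$ with $f\in\Ker\widehat{\partial}$ commute. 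Thus it suffices to produce one torus element that conjugates some nontrivial $\exp(f\widehat{\partial})$ to a different unipotent element.

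The key step is the conjugation formula. First I would check that the canonical derivation $\widehat{\partial}$ of Definition~\ref{cd} is $\TT$-homogeneous of some weight $w$: from $\widehat{\partial}(z)=y_2^{k_2}\cdots y_m^{k_m}$ and $z$ being $\TT$-invariant, $w$ is the $\TT$-weight of $y_2^{k_2}\cdots y_m^{k_m}$, and one verifies consistency with $\widehat{\partial}(y_1)=P'(z)$. For $t\in\TT$ with associated automorphism $t^*$, a short computation on weight vectors gives $t^*\circ\widehat{\partial}\circ(t^*)^{-1}=w(t)\,\widehat{\partial}$, whence
$$t^*\circ\exp(f\widehat{\partial})\circ(t^*)^{-1}=\exp\left(w(t)\,t^*(f)\,\widehat{\partial}\right).$$
Taking $f=1$, this reads $t^*\circ\exp(\widehat{\partial})\circ(t^*)^{-1}=\exp\left(w(t)\widehat{\partial}\right)$, which differs from $\exp(\widehat{\partial})$ as soon as $w(t)\neq 1$, by the faithful parametrization above.

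It then remains to exhibit $t\in\TT$ with $w(t)\neq1$. Here I would use the explicit one-parameter subgroup $y_2\mapsto sy_2$, $y_1\mapsto s^{-k_2}y_1$, with $z$ and the remaining $y_j$ fixed; this lies in $\TT$ since it preserves the monomial $y_1y_2^{k_2}\cdots y_m^{k_m}$, and it scales $\widehat{\partial}(z)=y_2^{k_2}\cdots y_m^{k_m}$ by $s^{k_2}$, so $w(t)=s^{k_2}$. Choosing $s$ with $s^{k_2}\neq1$ yields a torus element not commuting with $\exp(\widehat{\partial})$, so $\Aut(Y)$ is not commutative. I expect no serious obstacle: the only points needing care are the verification that $\widehat{\partial}$ is genuinely $\TT$-homogeneous with nonzero weight (immediate since $m\geq2$ forces the weight of $y_2^{k_2}\cdots y_m^{k_m}$ to be nontrivial on the $(m-1)$-dimensional torus $\TT$) and the conjugation identity, both of which are brief homogeneity arguments rather than real difficulties.
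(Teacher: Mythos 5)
Your proof is correct and follows essentially the same route as the paper: the paper also exhibits a torus element that fails to commute with an element of $\UU(\widehat{\partial})$, taking $t_0\in\TT$ acting by $y_2\mapsto -y_2$, $y_1\mapsto(-1)^{k_2}y_1$ together with $\alpha=\exp\bigl(y_2^{k_2-1}\widehat{\partial}\bigr)$ (the replica compensating for the possibility $(-1)^{k_2}=1$), whereas you take a generic $s$ with $s^{k_2}\neq 1$ and conjugate $\exp(\widehat{\partial})$ itself. Your verification via the $\TT$-homogeneity of $\widehat{\partial}$ and the conjugation formula $t^*\circ\exp(f\widehat{\partial})\circ(t^*)^{-1}=\exp\bigl(w(t)\,t^*(f)\,\widehat{\partial}\bigr)$ is sound and merely makes explicit the computation the paper leaves to the reader.
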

\begin{proof}
Let us consider $t_0\in\TT$, acting by $t_0\cdot(y_1,y_2,\ldots, y_m,z)=((-1)^{k_2}y_1,-y_2,\ldots, y_m,z)$ 
and
$\alpha=\exp\left(y_2^{k_2-1}\widehat{\partial}\right)$. It is easy to see, that these elements of $\Aut(Y)$ do not commute. 
\end{proof}

Analogically to Corollary~\ref{sb}, we obtain the following corollary.
\begin{cor}\label{sold}
Let $Y=\VV(y_1y_2^{k_2}\ldots y_m^{k_m}-P(z))$, $d=\deg P(z)\geq 2$, and for all~$i$ we have $k_i~\geq~2$. Then the group $\Aut(Y)$ is not solvable if and only if there are five indices $i_1,i_2,i_3,i_4,i_5\in\{1,\ldots,m\}$ such that $k_{i_1}=k_{i_2}=k_{i_3}=k_{i_4}=k_{i_5}$.
\end{cor}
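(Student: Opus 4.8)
The plan is to read off solvability directly from the semidirect product decomposition supplied by Theorem~\ref{vtt}, namely
$$\Aut(Y)\cong S(Y)\rightthreetimes\left(\left(\TT\times\DD\right)\rightthreetimes\UU(\widehat{\partial})\right).$$
First I would argue that the normal factor $N=(\TT\times\DD)\rightthreetimes\UU(\widehat{\partial})$ is solvable. The proper torus $\TT$ is abelian, the additional quasitorus $\DD$ is abelian since it is either a torus or a finite cyclic group by Lemma~\ref{pl}, and the big unipotent subgroup $\UU(\widehat{\partial})$ is commutative by construction. Because solvability is preserved under extensions, the group $N$, built as a semidirect product of these abelian and commutative pieces, is solvable.

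Next I would use that $\Aut(Y)/N\cong S(Y)$, so that $\Aut(Y)$ is an extension of $S(Y)$ by the solvable group $N$. Consequently $\Aut(Y)$ is solvable if and only if $S(Y)$ is solvable: an extension of a solvable group by a solvable group is again solvable, and conversely a group admitting a non-solvable quotient cannot be solvable.

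Then I would identify $S(Y)$ explicitly. By its definition $S(Y)\cong S_{m_1}\times\ldots\times S_{m_n}$, where $m_1,\ldots,m_n$ are the multiplicities of the distinct values occurring among $k_2,\ldots,k_m$; the variable $y_1$ has exponent one, which is not shared by any other variable, so $y_1$ is never permuted and contributes only a trivial factor. A finite product of symmetric groups is solvable if and only if each factor $S_{m_i}$ is solvable, which happens exactly when $m_i\leq 4$. Hence $S(Y)$ is non-solvable precisely when some multiplicity is at least $5$, that is, when there exist five indices $i_1,\ldots,i_5$ with $k_{i_1}=\ldots=k_{i_5}$. Combining this with the previous paragraph yields the stated equivalence.

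I do not expect a serious obstacle here: unlike in Corollary~\ref{sb}, the variety $Y$ is automatically irreducible and carries the clean decomposition of Theorem~\ref{vtt}, so there is no degenerate reducible case such as $\VV(y_1^{2}-z^2)$ to treat separately. The only point requiring care is the bookkeeping that $y_1$ contributes a trivial factor to $S(Y)$, so that the five coinciding weights necessarily sit among the indices $\{2,\ldots,m\}$, in agreement with the standing assumption $k_i\geq 2$ for $i\geq 2$.
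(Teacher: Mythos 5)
Your proposal is correct and is essentially the paper's own argument: the paper proves this corollary ``analogically to Corollary~\ref{sb}'', which amounts precisely to reading solvability off the decomposition of Theorem~\ref{vtt} — the normal factor $(\TT\times\DD)\rightthreetimes\UU(\widehat{\partial})$ is solvable as an extension of abelian groups, so $\Aut(Y)$ is solvable exactly when $S(Y)$ is, i.e.\ when no symmetric factor $\mathrm{S}_{m_i}$ with $m_i\geq 5$ occurs. Your two side remarks are also accurate: $y_1$ is never permuted since its weight $1$ is unique, and unlike Corollary~\ref{sb} no reducible degenerate case such as $\VV(y_1^2-z^2)$ needs separate treatment because $Y$ is irreducible here.
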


\section{Authomorphisms of Danielewski varieties}\label{las}

Let $A$ be a commutative associative algebra over $\KK$.
We say, that there is a {\it $\ZZ$-filtration} on $A$, if there are subspaces $A_i\subset A$, $i\in\ZZ$ such that if $i\geq j$, then $A_j\subset A_i$, $\cup_{i\in\ZZ}A_i=A$, 
$\cap_{i\in\ZZ}A_i=\{0\}$, and $A_i\cdot A_j\subset A_{i+j}$.  
\begin{de}
Let $A$ be an algebra with a fixed $\ZZ$-filtration on it. We can consider the {\it associated graded algebra} 
$$\mathrm{Gr}(A)=\bigoplus_{i\in\ZZ}A_i/A_{i-1}=\bigoplus_{i\in\ZZ}\overline{A}_i,$$
where if $\overline{f}=f+A_{i-1}\in\overline{A}_i$ and $\overline{g}=g+A_{j-1}\in\overline{A}_j$, then $\overline{f}\overline{g}=fg+A_{i+j-1}\in\overline{A}_{i+j}$.

There is a natural non-linear mapping $\mathrm{gr}\colon A\rightarrow \mathrm{Gr}(A)$, $f\mapsto \overline{f}$, where if $f\in A_i$ and $f\notin A_{i-1}$, then $\overline{f}=f+A_{i-1}\in\overline{A}_i$. By definition, $\mathrm{gr}(0)=0$.
\end{de}

Let $\partial$ be a nontrivial derivation of $A$. Fix $k\in\ZZ$. Suppose for all $i$ it is true, that $\partial(A_i)\subset A_{i+k}$.  Then we can define $\overline{\partial}_k\colon \overline{A}\rightarrow\overline{A}$ by the following rule.
If $f\in A_i\setminus A_{i-1}$, then
$$
\overline{\partial}_k(\overline{f})=\partial(f)+A_{i+k-1}\in\overline{A}_{i+k}, 
$$
$\overline{\partial}_k(0)=0$. On nonhomogeneous elements we define $\overline{\partial}_k$ by linearity. 
We obtain a homogeneous derivation of $\mathrm{Gr}(A)$ of degree $k$. If such $k$ exists and we take the minimal possible $k$, then $\overline{\partial}_k$ is a nontrivial derivation of~$A$. If $\partial$ is LND, then $\overline{\partial}_k$ is.

During this section by $X$ we denote a {\it Danielewski variety} 
$$
X=\VV(xy_1^{k_1}\ldots y_m^{k_m}-P(y_1,\ldots,y_m,z))\subset \KK^{m+2},
$$
where $m\geq 1$, for all $1\leq i\leq m$ we have $k_i\geq 2$, and 
$$
P(y_1,\ldots,y_m,z)=z^d+s_{d-1}(y_1,\ldots,y_m)z^{d-1}+\ldots+s_{0}(y_1,\ldots,y_m), d\geq 2,
$$
for some polynomials $s_j(y_1,\ldots,y_m)$.

There is a nontrivial LND of $\KK[X]$.
\begin{de}\label{cade} Let us define the {\it canonical LND} $\widetilde{\partial}$ on $X$ by 
$$\forall i\ \widetilde{\partial}(y_i)=0,\ \widetilde{\partial}(z)=y_1^{k_1}\ldots y_m^{k_m},\ \widetilde{\partial}(x)=\frac{\partial P}{\partial z}.$$ 
\end{de}
\begin{re}
If all $s_i$ are constants, then Danielewski variety $X$ is an $m$-suspension over a line considered in Section~\ref{oo} and the canonical derivation $\widetilde{\partial}$ coincides with the canonical derivation $\widehat{\partial}$ defined in Definition~\ref{cd}. 
\end{re}
This LND induces a degree function $\widetilde{\deg}$ on $\KK[X]\setminus\{0\}$ by the following rule. If $\widetilde{\partial}^p(f)\neq 0$ and $\widetilde{\partial}^{p+1}(f)=0$, then  $\widetilde{\deg}(f)=p$. This degree function gives a filtration: 
$$\KK[X]_i=\{f\in\KK[X]\mid \widetilde{\deg}(f)\leq i\}\cup \{0\}.$$

It is easy to see, that 
$$
\mathrm{Gr}(\KK[X])=\KK[\overline{x},\overline{y_1},\ldots, \overline{y_m}, \overline{z}]/(\overline{x}\overline{y_1}^{k_1}\ldots\overline{y_m}^{k_m}-\overline{z}^d).
$$
Let us denote the variety $\{\overline{x}\overline{y_1}^{k_1}\ldots\overline{y_m}^{k_m}-\overline{z}^d\}\subset\KK^{m+2}$ by $Y$.

\begin{re}
By Lemma~\ref{ir} the variety $Y$ is irreducible. That is $\KK[Y]$ does not admit any zero divisors. Therefore, $\KK[X]$ does not admit any zero divisors, that is $X$ is irreducible.
\end{re}

\begin{lem}\label{filtr}
Every automorphism $\psi$ of $X$ preserves the filtration of $\KK[X]$. That is $\psi^*(\KK[X]_i)=\KK[X]_i$.
\end{lem}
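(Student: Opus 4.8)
The plan is to show that the filtration defined by $\widetilde{\deg}$ is intrinsic to the algebra $\KK[X]$, so that any automorphism must preserve it. The key observation is that the degree function $\widetilde{\deg}$ comes from the canonical LND $\widetilde{\partial}$, and I want to characterize $\KK[X]_i$ purely in terms that are preserved by automorphisms. The cleanest way is to relate $\widetilde{\deg}$ to all LNDs simultaneously. By the remark that $\mathrm{Gr}(\KK[X])\cong\KK[Y]$ with $Y=\VV(\overline{x}\,\overline{y_1}^{k_1}\ldots\overline{y_m}^{k_m}-\overline{z}^d)$, and since the results of Section~\ref{oo} apply to $Y$ (it is an $m$-suspension over a line with the unique weight equal to one), every LND of $\KK[Y]$ is a replica of the canonical LND $\widehat{\partial}$, by Lemma~\ref{mlsul}(2).

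First I would establish that $\widetilde{\deg}$ can be recovered from the family of all LNDs of $\KK[X]$. Given any LND $\partial$ of $\KK[X]$ and any $f\in\KK[X]$, the degree of $f$ relative to $\partial$ is the largest $p$ with $\partial^p(f)\neq 0$. I claim $\widetilde{\deg}(f)=\max_{\partial}\deg_\partial(f)$, where the maximum ranges over all LNDs $\partial$ (suitably normalized). To see this, I would pass to the associated graded algebra: an LND $\partial$ on $\KK[X]$ induces, via the construction preceding this lemma, a nontrivial homogeneous LND $\overline{\partial}_k$ on $\mathrm{Gr}(\KK[X])\cong\KK[Y]$. By Lemma~\ref{mlsul}(2), every LND of $\KK[Y]$ is of the form $h(\overline{y_1},\ldots,\overline{y_m})\widehat{\partial}$, and all such LNDs increase the $\widetilde{\deg}$-grading in a controlled way. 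The point is that the canonical LND $\widetilde{\partial}$ realizes the maximal degree increase, and the induced filtration is therefore the one associated to the grading on $\mathrm{Gr}(\KK[X])$ coming from $\widehat{\partial}$.

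Having characterized the filtration intrinsically, the conclusion is immediate: if $\psi\in\Aut(X)$ and $\partial$ is an LND, then $\psi^*\partial(\psi^*)^{-1}$ is again an LND, so $\deg_{\partial}(\psi^*(f))=\deg_{\psi^*\partial(\psi^*)^{-1}}(f)$, and taking the maximum over all $\partial$ shows $\widetilde{\deg}(\psi^*(f))=\widetilde{\deg}(f)$. This gives $\psi^*(\KK[X]_i)=\KK[X]_i$ for every $i$. The main obstacle I anticipate is the intrinsic characterization step, namely proving that $\widetilde{\deg}$ equals the maximum of $\deg_\partial$ over all LNDs. This requires knowing that no LND of $\KK[X]$ can increase the degree faster than the canonical one, which in turn rests on transferring the rigidity and almost-rigidity information from $\KK[Y]$ (Lemma~\ref{mlsul}) back up through the graded-to-filtered correspondence. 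I would handle this by showing that a degree-jump strictly exceeding that of $\widetilde{\partial}$ would produce, in $\mathrm{Gr}(\KK[X])=\KK[Y]$, an LND not of the canonical form, contradicting Lemma~\ref{mlsul}(2). An alternative, possibly cleaner route — which the author may well intend — is to argue directly that $\widetilde{\partial}$ is, up to replica, the unique LND of $\KK[X]$ (so $X$ is almost rigid), identify $\KK[X]_i$ as the set of elements killed by $\widetilde{\partial}^{\,i+1}$, and then invoke that $\psi^*$ conjugates $\widetilde{\partial}$ to a replica $h\widetilde{\partial}$ whose powers define the same filtration.
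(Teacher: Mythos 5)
Your main route is, in outline, the same as the paper's: pass to the associated graded algebra $\mathrm{Gr}(\KK[X])\cong\KK[Y]$, use Lemma~\ref{mlsul}(2) to see that every homogeneous LND of $\KK[Y]$ is $h(\overline{y_1},\ldots,\overline{y_m})\widehat{\partial}$ and hence has degree $-1$, conclude that every LND of $\KK[X]$ lowers $\widetilde{\deg}$, and finish by conjugating $\widetilde{\partial}$ by $\psi^*$. But there is a genuine gap at exactly the point you flag and then wave through. Your contradiction argument (``a degree-jump exceeding that of $\widetilde{\partial}$ would produce in $\KK[Y]$ an LND not of canonical form'') presupposes that an arbitrary LND $\partial$ of $\KK[X]$ induces a homogeneous derivation $\overline{\partial}_k$ on $\mathrm{Gr}(\KK[X])$ at all. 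The construction preceding the lemma applies only under the hypothesis that there is a \emph{single} integer $k$ with $\partial(\KK[X]_i)\subset\KK[X]_{i+k}$ for all $i$; for an arbitrary LND this uniform filteredness is not automatic, and establishing it is the technical heart of the paper's proof. The paper gets it from Makar-Limanov's Jacobian lemma \cite[Lemma~2]{ML}: embedding $\KK[X]$ into $\KK(y_1,\ldots,y_m,z)$, one writes $\partial(f)=g\,\mathrm{J}(f_1,\ldots,f_m,f)$ with $f_1,\ldots,f_m$ a transcendence basis of $\Ker\partial$, which yields the explicit uniform shift $k=\widetilde{\deg}\,g+\sum_i\widetilde{\deg}f_i-1$. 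Without this step (or a substitute, e.g.\ an argument that $\mathrm{Gr}(\KK[X])$ is generated by the symbols $\overline{x},\overline{y_i},\overline{z}$, so that the shift on generators bounds the shift everywhere), the induced LND $\overline{\partial}_{k_0}$, and with it your whole contradiction, is simply not available. Once uniform filteredness is in hand, the rest of your argument is the paper's: minimality of $k_0$ makes $\overline{\partial}_{k_0}$ nontrivial, Lemma~\ref{mlsul}(2) forces $k_0=-1$, hence $\partial^{i+1}(f)=0$ for every LND $\partial$ and every $f\in\KK[X]_i$, and conjugation-invariance of the set of LNDs gives $\psi^*(\KK[X]_i)=\KK[X]_i$; your ``maximum over all LNDs'' characterization of $\widetilde{\deg}$ then holds with no normalization needed.

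Your proposed alternative route is circular as stated. Almost rigidity of $X$ (every LND of $\KK[X]$ is a replica of $\widetilde{\partial}$) is not available before this lemma: in the paper it is a consequence of the machinery developed here, not an input, and you sketch no independent proof. Granted almost rigidity, the rest of that route would indeed work, since for $h\in\Ker\widetilde{\partial}$ one has $(h\widetilde{\partial})^{i+1}=h^{i+1}\widetilde{\partial}^{i+1}$ in the domain $\KK[X]$, so a nonzero replica defines the same degree function as $\widetilde{\partial}$ and conjugation finishes the proof; but obtaining almost rigidity without first controlling how LNDs interact with the filtration is precisely the hard part you would be assuming.
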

\begin{proof}

We can consider an embedding $\phi\colon\KK[X]\hookrightarrow \KK(y_1,\ldots,y_m, z)$ into the field of rational functions in $m+1$ variables given by 
$$\phi(x)=\frac{P(y_1,\ldots,y_m,z)}{y_1^{k_1}\ldots y_m^{k_m}}.$$

We need the following lemma due to Makar-Limanov.

\begin{lem}\cite[Lemma 2]{ML}
Let $\partial$ be a derivation of $A$, where $A$ is a subring of $\KK(x_1,\ldots,x_n)$. If the transcendence degree of $\Ker \partial$ is $n-1$ and $\{f_1,\ldots, f_{n-1}\}$ is a transcendence basis of $\Ker\partial$ then there exists a $g\in\KK(x_1,\ldots,x_n)$ such that 
$$\partial(a) = g\mathrm{J}(f_1,\ldots, f_{n-1}, a)$$ 
for every $a\in A$. 
Here $\mathrm{J}(f_1,\ldots, f_{n-1}, a)$ is the Jacobian relative to $x_1,\ldots,x_n$.
\end{lem}

It is easy to see, that if $\partial$ is a LND of $\KK[X]$, then all conditions of this lemma are satisfied. Therefore, 
$$
\partial(f)=g\mathrm{J}(f_1,\ldots,f_m,f).
$$

Let us put 
$$k=\widetilde{\deg}g+\sum_{i=1}^{m}\widetilde{\deg}f_i-\sum_{i=1}^{m}\widetilde{\deg}y_i-\widetilde{\deg}z=\widetilde{\deg}g+\sum_{i=1}^{m}\widetilde{\deg}f_i-1.$$
Then $\partial(A_i)\subset A_{i+k}$.

Let $k_0$ be the minimal $k$ satisfying  the condition $\partial(A_i)\subset A_{i+k}$. We obtain a nontrivial homogeneous LND $\overline{\partial}_{k_0}$ of $\KK[Y]$ of degree $k_0$. 
By Lemma~\ref{mlsul}(2) we obtain $\overline{\partial}_{k_0}=h(\overline{y_1},\ldots, \overline{y_m})\widehat{\partial}$, where for all $i$ we have $\widehat{\partial}(\overline{y_i})=0$, $\widehat{\partial}(\overline{z})=\overline{y_1}^{k_1}\ldots \overline{y_m}^{k_m}$, 
$\widehat{\partial}(\overline{x})=d\overline{z}^{d-1}$. It is easy to see, that $h(\overline{y_1},\ldots, \overline{y_m})\widehat{\partial}$ is a homogeneous LND of degree $-1$. We obtain 
$k_0=-1$. That is $\partial(\KK[X]_i)\subset \KK[X]_{i-1}$. Therefore, for every $f\in \KK[X]_i$ and for every LND $\partial$ of $\KK[X]$ we have $\partial^{i+1}(f)=0$.

Suppose there exist $\psi\in\Aut(X)$ and $f\in\KK[X]_i$ such that $\psi^*(f)\notin\KK[X]_i$. Then $\widetilde{\partial}^{i+1}(\psi^*(f))\neq 0$. Hence, 
$$
(\psi^*)^{-1}(\widetilde{\partial}^{i+1}(\psi^*(f)))=\left((\psi^*)^{-1}\circ \widetilde{\partial}\circ \psi^*\right)^{i+1}(f)\neq 0.
$$ 
But $(\psi^*)^{-1}\circ \widetilde{\partial}\circ \psi^*$ is an LND of $\KK[X]$. We obtain a contradiction. Therefore, for every automorphism $\psi$ we have
$\psi^*(\KK[X]_i)\subset\KK[X]_i$. Hence, $\psi^*(\KK[X]_i)=\KK[X]_i$. Lemma~\ref{filtr} is proven.
\end{proof}

\begin{prop}\cite[Theorem~2.8]{D}\label{dub}
Let $X$ be a Danielewski variety. Then the Makar-Limanov invariant of $X$ is
$\KK[y_1,\ldots, y_m]$.
\end{prop}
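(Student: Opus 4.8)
The plan is to prove the two inclusions $\KK[y_1,\ldots,y_m]\subseteq\mathrm{ML}(X)$ and $\mathrm{ML}(X)\subseteq\KK[y_1,\ldots,y_m]$ separately, leaning on the filtration machinery already set up for $\KK[X]$. First I would establish $\KK[y_1,\ldots,y_m]\subseteq\mathrm{ML}(X)$. The essential input is the computation carried out inside the proof of Lemma~\ref{filtr}: for \emph{every} LND $\partial$ of $\KK[X]$ one has $\partial(\KK[X]_i)\subseteq\KK[X]_{i-1}$, because the associated graded derivation on $\mathrm{Gr}(\KK[X])=\KK[Y]$ is a replica $h(\overline{y_1},\ldots,\overline{y_m})\widehat{\partial}$ of the canonical LND of $Y$, which by Lemma~\ref{mlsul}(2) is homogeneous of degree $-1$. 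Since $\widetilde{\deg}(y_i)=0$, i.e. $y_i\in\KK[X]_0$, and since the degree function $\widetilde{\deg}$ is nonnegative so that $\KK[X]_{-1}=\{0\}$, taking $i=0$ gives $\partial(y_i)\in\KK[X]_{-1}=\{0\}$ for every LND $\partial$. Hence each $y_i$ lies in the kernel of every LND, so $y_i\in\mathrm{ML}(X)$; as $\mathrm{ML}(X)$ is a subalgebra, $\KK[y_1,\ldots,y_m]\subseteq\mathrm{ML}(X)$.

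For the reverse inclusion I would use that $\widetilde{\partial}$ is itself an LND, so $\mathrm{ML}(X)\subseteq\Ker\widetilde{\partial}$, and that $\Ker\widetilde{\partial}=\KK[X]_0$ (an element has $\widetilde{\deg}=0$ exactly when $\widetilde{\partial}$ annihilates it). It therefore suffices to show $\KK[X]_0=\KK[y_1,\ldots,y_m]$. Because $\KK[X]_{-1}=\{0\}$, the bottom filtration piece coincides with the degree-zero homogeneous component, $\KK[X]_0=\overline{A}_0$, and $\mathrm{gr}\colon\KK[X]_0\to\overline{A}_0$ is the identity. Under the isomorphism $\mathrm{Gr}(\KK[X])=\KK[Y]$, the grading induced by $\widetilde{\deg}$ assigns $\deg\overline{z}=1$ and $\deg\overline{x}=d$, forced by homogeneity of the relation $\overline{x}\,\overline{y_1}^{k_1}\cdots\overline{y_m}^{k_m}=\overline{z}^{\,d}$, while $\deg\overline{y_i}=0$. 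As $d\geq 2$, no monomial involving $\overline{x}$ or $\overline{z}$ can have degree $0$, and the defining relation is homogeneous of positive degree, so the degree-zero component of $\KK[Y]$ is exactly $\KK[\overline{y_1},\ldots,\overline{y_m}]$. Since $\mathrm{gr}(y_i)=\overline{y_i}$ and the $\overline{y_i}$ are algebraically independent, the subalgebra $\KK[y_1,\ldots,y_m]\subseteq\KK[X]_0$ already maps onto all of $\overline{A}_0$ under the bijection $\mathrm{gr}$; hence $\KK[X]_0=\KK[y_1,\ldots,y_m]$. Combining the two inclusions gives $\mathrm{ML}(X)=\KK[y_1,\ldots,y_m]$.

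The substantive work is already packaged into Lemma~\ref{filtr}, namely that every LND drops the filtration by exactly one, so I expect no serious obstacle beyond it. The only point requiring care is the identification $\KK[X]_0=\KK[y_1,\ldots,y_m]$: one must confirm that the degree-zero part of $\mathrm{Gr}(\KK[X])$ contains nothing beyond polynomials in the $\overline{y_i}$ — which is immediate from the explicit presentation of $\KK[Y]$ and the positivity of $\deg\overline{x}$ and $\deg\overline{z}$ — and that $\mathrm{gr}$ restricted to $\KK[X]_0$ is a bijection, which holds precisely because $\KK[X]_{-1}=\{0\}$.
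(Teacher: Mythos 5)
Your proposal is correct and follows essentially the same route as the paper: both inclusions rest on the fact, extracted from the proof of Lemma~\ref{filtr}, that every LND $\partial$ satisfies $\partial(\KK[X]_i)\subseteq\KK[X]_{i-1}$ (so $\partial(y_j)\in\KK[X]_{-1}=\{0\}$), together with $\mathrm{ML}(X)\subseteq\Ker\widetilde{\partial}=\KK[y_1,\ldots,y_m]$. The only difference is that you explicitly verify $\Ker\widetilde{\partial}=\KK[X]_0=\KK[y_1,\ldots,y_m]$ via the grading on $\mathrm{Gr}(\KK[X])\cong\KK[Y]$, a point the paper asserts without proof; your verification is sound.
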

\begin{proof}
We have proved, that for every $f\in \KK[X]_i$ and for every LND~$\partial$ of $\KK[X]$ we have $\partial^{i+1}(f)=0$. Therefore, $\partial(y_j)=0$. Since $\Ker\widetilde{\partial}=\KK[y_1,\ldots, y_m]$, we obtain $\mathrm{ML}(X)~=~\KK[y_1,\ldots, y_m]$.
\end{proof}

\begin{lem}\label{opre}
Every $\psi\in\Aut(\KK[X])$ induces $\overline{\psi}\in\Aut(\KK[Y])$ by the rule $\overline{\psi}(\overline{f})=\overline{\psi(f)}$.
\end{lem}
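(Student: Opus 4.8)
The plan is to exploit Lemma~\ref{filtr}, which guarantees that $\psi$ (regarded as an algebra automorphism of $\KK[X]$) respects the filtration, i.e. $\psi(\KK[X]_i)=\KK[X]_i$ for every $i$. Since $\mathrm{Gr}(\KK[X])=\bigoplus_i \KK[X]_i/\KK[X]_{i-1}=\KK[Y]$, the natural move is to let $\psi$ descend to each quotient. Concretely, for each $i$ I would define a $\KK$-linear map $\overline\psi_i\colon \KK[X]_i/\KK[X]_{i-1}\to \KK[X]_i/\KK[X]_{i-1}$ by $\overline\psi_i(f+\KK[X]_{i-1})=\psi(f)+\KK[X]_{i-1}$, and set $\overline\psi=\bigoplus_i\overline\psi_i$. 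This is well defined because $\psi(\KK[X]_{i-1})=\KK[X]_{i-1}$, so the value is independent of the representative $f$, and by construction $\overline\psi$ is linear.

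Next I would check that $\overline\psi$ realizes the stated rule $\overline\psi(\overline f)=\overline{\psi(f)}$ and that the right-hand side is meaningful. If $\widetilde{\deg}(f)=i$, then $f\in\KK[X]_i\setminus\KK[X]_{i-1}$, so $\psi(f)\in\KK[X]_i$; moreover $\psi(f)\notin\KK[X]_{i-1}$, since otherwise $f=\psi^{-1}(\psi(f))\in\psi^{-1}(\KK[X]_{i-1})=\KK[X]_{i-1}$, using that $\psi^{-1}$ also preserves the filtration by Lemma~\ref{filtr}. Hence $\widetilde{\deg}(\psi(f))=i$, so $\overline{\psi(f)}=\mathrm{gr}(\psi(f))$ is the homogeneous degree-$i$ element $\psi(f)+\KK[X]_{i-1}$, which is exactly $\overline\psi_i(\overline f)=\overline\psi(\overline f)$. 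Thus the a priori nonlinear rule $\overline f\mapsto\overline{\psi(f)}$ coincides with the linear map $\overline\psi$ on homogeneous elements, and these span $\mathrm{Gr}(\KK[X])$.

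It then remains to verify that $\overline\psi$ is an algebra homomorphism and a bijection. For multiplicativity I would use that $\mathrm{Gr}(\KK[X])=\KK[Y]$ is a domain (the Remark preceding Lemma~\ref{filtr}): in a domain the leading terms of a product cannot cancel, so $\widetilde{\deg}(fg)=\widetilde{\deg}(f)+\widetilde{\deg}(g)$ and $\overline f\,\overline g=\overline{fg}$ for all nonzero $f,g$. Then, on homogeneous elements, $\overline\psi(\overline f\,\overline g)=\overline\psi(\overline{fg})=\overline{\psi(fg)}=\overline{\psi(f)\psi(g)}=\overline{\psi(f)}\,\overline{\psi(g)}=\overline\psi(\overline f)\,\overline\psi(\overline g)$, where I use that $\psi$ is an algebra homomorphism; multiplicativity on all of $\mathrm{Gr}(\KK[X])$ follows by bilinearity. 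Finally, applying the same construction to the automorphism $\psi^{-1}$ produces $\overline{\psi^{-1}}$, and the functorial relation $\overline{\varphi\circ\psi}=\overline\varphi\circ\overline\psi$ (immediate from the defining rule) gives $\overline{\psi^{-1}}\circ\overline\psi=\overline{\mathrm{id}}=\mathrm{id}$ and likewise on the other side, so $\overline\psi\in\Aut(\KK[Y])$.

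The only genuinely delicate points are bookkeeping ones: because $\mathrm{gr}$ is nonlinear, one must define $\overline\psi$ on the graded pieces and separately confirm compatibility with the rule $\overline\psi(\overline f)=\overline{\psi(f)}$; and exact preservation of degree, not merely $\widetilde{\deg}(\psi(f))\le\widetilde{\deg}(f)$, must be extracted from the bijectivity of $\psi$ via Lemma~\ref{filtr} applied to $\psi^{-1}$. The step I expect to require the most care is multiplicativity, which relies essentially on the domain property of $\KK[Y]$ to guarantee $\overline f\,\overline g=\overline{fg}$; given the Remark this is routine but is the load-bearing input.
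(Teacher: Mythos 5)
Your proposal is correct and follows essentially the same route as the paper: both rely on Lemma~\ref{filtr} (applied to $\psi$ and, implicitly, to $\psi^{-1}$) for well-definedness, verify multiplicativity via the identity $\overline{f}\cdot\overline{g}=\overline{fg}$, and obtain bijectivity from $\overline{\psi^{-1}}$. Your write-up merely makes explicit two points the paper leaves tacit, namely that the rule must be defined on the graded pieces and extended by linearity, and that $\overline{f}\cdot\overline{g}=\overline{fg}$ rests on $\mathrm{Gr}(\KK[X])\cong\KK[Y]$ being a domain.
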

\begin{proof}
First of all we need to check correctness of the definition of $\overline{\psi}$. If $\overline{f}=\overline{g}$, then there is $i$ such that $f\notin \KK[X]_i$ and $f-g\in\KK[X]_i$. Then $\psi(f)\notin\KK[X]_i$ and $\psi(f-g)\in\KK[X]_i$. Therefore, $\overline{\psi(f)}=\overline{\psi(g)}$.

We have 
$$
\overline{\psi}(\overline{f}\cdot\overline{g})=\overline{\psi}(\overline{f\cdot g})=\overline{\psi(f\cdot g)}=\overline{\psi(f)\cdot \psi(g)}=\overline{\psi(f)}\cdot\overline{\psi(g)}=\overline{\psi}(\overline{f})\cdot\overline{\psi}(\overline{g}).
$$
Hence, $\overline{\psi}$ is a homomorphism $\KK[Y]\rightarrow\KK[Y]$.

It is easy to see that $\overline{\psi}\circ\overline{\psi^{-1}}=\mathrm{id}$.
\end{proof}

\begin{lem}\label{homom}
The mapping $\Phi\colon\Aut(\KK[X])\rightarrow\Aut(\KK[Y])$, $\Phi(\psi)=\overline{\psi}$ is a homomorphism.
\end{lem}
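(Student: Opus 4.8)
The plan is to verify directly that $\Phi$ respects composition, i.e. that $\overline{\psi_1\circ\psi_2}=\overline{\psi_1}\circ\overline{\psi_2}$ for all $\psi_1,\psi_2\in\Aut(\KK[X])$, relying only on Lemma~\ref{opre} (well-definedness of the bar construction) and Lemma~\ref{filtr} (strict preservation of the filtration). Both sides are $\KK$-algebra endomorphisms of $\mathrm{Gr}(\KK[X])=\KK[Y]$, so it suffices to check that they agree on every element of the form $\overline{f}=\mathrm{gr}(f)$: each nonzero homogeneous element of $\KK[Y]$ is of this shape, and the homogeneous elements span $\KK[Y]$.

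First I would fix $f\in\KK[X]\setminus\{0\}$ and evaluate both endomorphisms on $\overline{f}$. On one side, by the defining rule of Lemma~\ref{opre},
$$\overline{\psi_1\circ\psi_2}(\overline{f})=\overline{(\psi_1\circ\psi_2)(f)}=\overline{\psi_1(\psi_2(f))}.$$
On the other side, writing $g=\psi_2(f)$ and applying the same rule twice,
$$(\overline{\psi_1}\circ\overline{\psi_2})(\overline{f})=\overline{\psi_1}\bigl(\overline{\psi_2(f)}\bigr)=\overline{\psi_1}(\overline{g})=\overline{\psi_1(g)}=\overline{\psi_1(\psi_2(f))}.$$
Hence the two maps agree on every $\overline{f}$.

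The only point that must be checked is that these formal manipulations are legitimate, namely that each application of the bar map lands in the intended graded component and does not collapse a degree. This is exactly where Lemma~\ref{filtr} is used: since $\psi_2^*$ preserves the filtration strictly, $f\in\KK[X]_i\setminus\KK[X]_{i-1}$ forces $\psi_2(f)\in\KK[X]_i\setminus\KK[X]_{i-1}$, so $\overline{\psi_2(f)}$ is the genuine leading term in $\overline{A}_i$ and indeed $\overline{\psi_2}(\overline{f})=\overline{\psi_2(f)}$; the same reasoning applied to $\psi_1$ on $g$ justifies the remaining steps. I expect this bookkeeping --- confirming that the leading term of a composite equals the composite of leading terms --- to be the only subtle point, and it is immediate once filtration preservation is granted.

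Finally I would record that $\Phi(\mathrm{id})=\mathrm{id}$ and that $\Phi(\psi)$ is invertible with inverse $\overline{\psi^{-1}}=\Phi(\psi^{-1})$, as already observed in the proof of Lemma~\ref{opre}; thus $\Phi$ takes values in $\Aut(\KK[Y])$. By additivity of the two algebra endomorphisms together with the fact that the homogeneous elements $\overline{f}$ span $\KK[Y]$, their agreement on all $\overline{f}$ upgrades to agreement on all of $\KK[Y]$. Combined with the computation above, this shows that $\Phi$ is a group homomorphism.
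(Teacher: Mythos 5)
Your proposal is correct and follows essentially the same route as the paper: the paper's proof is exactly the evaluation $\Phi(\psi\circ\tau)(\overline{f})=\overline{\psi(\tau(f))}=(\Phi(\psi)\circ\Phi(\tau))(\overline{f})$ on elements $\overline{f}$, with well-definedness and invertibility delegated to Lemma~\ref{opre} just as you do. Your extra remark that Lemma~\ref{filtr} guarantees $\psi(f)$ has the same filtration level as $f$, so the bar of a composite is the composite of bars, is the same justification the paper relies on implicitly.
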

\begin{proof}
For every $f\in\KK[X]$ we have:
\begin{multline*}
\Phi(\psi\circ\tau)(\overline{f})=\overline{(\psi\circ\tau)(f)}=\overline{\psi(\tau(f))}=\Phi(\psi)(\overline{\tau(f)})=\\
=\Phi(\psi)(\Phi(\tau)(\overline{f}))=(\Phi(\psi)\circ\Phi(\tau))(\overline{f}).
\end{multline*}

Therefore, $\Phi(\psi\circ\tau)=\Phi(\psi)\circ\Phi(\tau)$.
\end{proof}

Let us consider the automorphism $\alpha$ of $\KK^{m+2}$ given by 
$$\forall i\ \alpha^*(y_i)=y_i, \qquad \alpha^*(x)=x, \qquad\alpha^*(z)=z-\frac{s_{d-1}(y_1,\ldots,y_m)}{d}.$$ 
If we change coordinates via $\alpha$, $X$ maps to a variety with the same type of equation, but with $s_{d-1}(y_1,\ldots,y_m)=0$. 
Thus, further we assume that $s_{d-1}(y_1,\ldots,y_m)=0$.

Consider $\mathrm{S}_m$-action on $\KK[y_1,\ldots,y_m]$ by permutations of coordinates. By $S(X)\subset \mathrm{S}_m$ we denote the stabilizer of the monomial $y_1^{k_1}\ldots y_m^{k_m}$. That is $S(X)$ permutes only variables with coinciding~$k_i$.

Consider the action of the semidirect product $G=S(X)\rightthreetimes (\KK^\times)^{m+1}$ on $\KK(y_1,\ldots,y_m,z)$ given by
$$
(\sigma, t_1,\ldots,t_{m+1})\cdot (y_1,\ldots,y_m,z)=(t_1y_{\sigma(1)},\ldots,t_my_{\sigma(m)},t_{m+1}z).
$$
Recall that we have an embedding $\phi\colon\KK[X]\hookrightarrow \KK(y_1,\ldots,y_m,z)$. Let $\mathbb{G}\subset G$ be the stabilizer of $\phi(\KK[X])$. Then we have a $\mathbb{G}$-action on $X$. 
\begin{de}
Let us call $\mathbb{G}$ the {\it canonical group} of $X$.
\end{de}

If we know the equation of $X$, then we can explicitly describe $\mathbb{G}$. Indeed, if $g~=~(\sigma,t_1,\ldots, t_m)\in \mathbb{G}$ then 
$$
g\cdot x=\frac{P(t_1y_{\sigma(1)},\ldots,t_my_{\sigma(m)}, t_{m+1}z)}{\prod_{i=1}^m (t_i y_i)^{k_i}}.
$$
We have 
\begin{multline*}
g\cdot x-\frac{t_{m+1}^d}{\prod_{i=1}^m t_i^{k_i}}x=\frac{P(t_1y_{\sigma(1)},\ldots,t_my_{\sigma(m)}, t_{m+1}z)}{\prod_{i=1}^m (t_i y_i)^{k_i}}-
\frac{t_{m+1}^dP(y_1,\ldots,y_m, z)}{\prod_{i=1}^m (t_i y_i)^{k_i}}=\\
=\frac{\sum_{i=0}^{d-2}(t_{m+1}^is_i(t_1y_{\sigma(1)},\ldots,t_my_{\sigma(m)})-t_{m+1}^ds_i(y_1,\ldots,y_m))z^i}{\prod_{i=1}^m (t_i y_i)^{k_i}}.
\end{multline*}
Thus we can define $g\cdot x$ correctly if and only if 
$$
\prod_{i=1}^m ( y_i)^{k_i}\mid(s_i(t_1y_{\sigma(1)},\ldots,t_my_{\sigma(m)})-t_{m+1}^{d-i}s_i(y_1,\ldots,y_m))
$$
for all $0\leq i\leq d-2$. That is $\mathbb{G}$ is given in $G$ by conditions, that coefficients of all monomials in $s_i(t_1y_{\sigma(1)},\ldots,t_my_{\sigma(m)})-t_{m+1}^{d-i}s_i(y_1,\ldots,y_m)$, which a not divisible by 
$\prod_{i=1}^m ( y_i)^{k_i}$ are equal to zero.

\begin{ex}\label{e2}
Let $X=\VV(xy^2-(z^3+(y+1)z+1))$. Then $G\cong (\KK^\times)^2$. The nontrivial~$s_i$ are $s_1=y+1$ and $s_0=1$. The conditions to $\mathbb{G}$ are 
$$
\begin{cases}
y^2\mid (t_1y+1-t_2^2(y+1));\\
y^2\mid (1-t_2^3).
\end{cases}
$$
Hence, $t_1=t_2=1$, that is $\mathbb{G}=\{\mathrm{id}\}$.
\end{ex}

\begin{ex}
Let $X=\VV(xy_1^2y_2^2y_3^3-(z^2+y_1^2y_2^3y_3^4+y_3^3+1))$.
Then $G=\mathrm{S}_2\rightthreetimes(\KK^\times)^4$. The only nontrivial $s_i$ is $s_0(y_1, y_2, y_3)=y_1^2y_2^3y_3^4+y_3^3+1$. The condition to $\mathbb{G}$ is 
$$
y_1^2y_2^2y_3^3\mid (t_1^2t_2^3t_3^4y_{\sigma(1)}^2y_{\sigma(2)}^3y_3^4+t_3^3y_3^3+1-t_4^2(y_1^2y_2^3y_3^4+y_3^3+1)).
$$ 
Therefore $t_4^2=1$, $t_3^3=1$, that is $\mathbb{G}\cong \mathrm{S}_2\rightthreetimes \left((\KK^\times)^2\times\ZZ_3\times \ZZ_2\right)$.
\end{ex}

If $X$ is a suspension over a line considered in Section~\ref{oo}, then $\GG=S(X)\rightthreetimes (\TT\times\DD)$. But in general case $\GG$ can not be decomposed in such a semidirect product. Let us give an example, where $\mathbb{G}$ can not be decomposed into a semidirect product of $\mathbb{G}_1\subset \mathrm{S}_m$ and $\mathbb{G}_2\subset (\KK^\times)^{m+1}$.

\begin{ex}\label{e4}
Let $X=\VV(xy_1^2y_2^2-(z^3+z+y_1-y_2))$. Here $G=\mathrm{S}_2\rightthreetimes (\KK^\times)^3$. The nontrivial $s_i$ are $s_1=1$ and $s_0(y_1, y_2)=y_1-y_2$. The condition to $\mathbb{G}$ is
$$
\begin{cases}
y_1^2y_2^2\mid (1-t_3^2);\\
y_1^2y_2^2\mid (t_1y_{\sigma(1)}-t_2y_{\sigma(2)}-t_3^3(y_1-y_2)).
\end{cases}
$$

We have $t_3^2=1$ and
if $\sigma=\mathrm{id}$, then $t_1=t_3^3$ and $t_2=t_3^3$,
if $\sigma=(1,2)$, then $t_2=-t_3^3$, $t_1=-t_3^3$.

So, 
$$
\mathbb{G}=\{(\mathrm{id},1,1,1), (\mathrm{id}, -1,-1,-1), ((1,2), -1,-1,1), ((1,2),1,1,-1)\}.
$$
It is easy to see, that $\mathbb{G}\cong\ZZ_2\oplus\ZZ_2$, but it can not be decomposed into a product of any subgroups of $S_2$ and $(\KK^\times)^3$.
\end{ex}

\begin{prop}\label{mai}
Let $X$ be a Danielewski variety.
The group $\Aut(X)$ is generated by 

1) the canonical group $\mathbb{G}$ of $X$; 

2) the big unipotent subgroup $\UU(\widetilde{\partial})$ corresponding to the canonical LND on $X$.

\end{prop}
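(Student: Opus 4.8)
The plan is to analyze an arbitrary $\psi\in\Aut(X)$ through the homomorphism $\Phi\colon\Aut(X)\to\Aut(Y)$ of Lemma~\ref{homom} and then to correct $\psi$ by an element of $\UU(\widetilde\partial)$ until it becomes an element of $\mathbb{G}$. Since $\mathbb{G}$ and $\UU(\widetilde\partial)$ both act on $X$, the inclusion $\langle\mathbb{G},\UU(\widetilde\partial)\rangle\subset\Aut(X)$ is clear, so only the reverse inclusion needs proof.

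First I would pin down the action of $\psi$ on the generators. Because $\mathrm{ML}(X)=\KK[y_1,\ldots,y_m]=\KK[X]_0$ (Proposition~\ref{dub}) is exactly the degree-$0$ part of the filtration, $\psi^*$ preserves it, and by Lemma~\ref{filtr} the induced $\overline\psi=\Phi(\psi)$ is a \emph{graded} automorphism of $\mathrm{Gr}(\KK[X])=\KK[Y]$ for the grading $\deg\overline z=1$, $\deg\overline y_i=0$, $\deg\overline x=d$ (this is the $\DD$-grading, with respect to which $\widehat\partial$ has degree $-1$). Applying Lemma~\ref{mlsul}(4) to $\overline\psi$ gives $\overline\psi(\overline y_i)=\lambda_i\overline y_{\sigma(i)}$ with $\sigma\in S(X)$; since $\overline A_0=\KK[X]_0$ canonically, lifting yields $\psi(y_i)=\lambda_i y_{\sigma(i)}$. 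For $z$ I note that $\widetilde\deg z=1$, so $\overline\psi(\overline z)$ is homogeneous of degree $1$; the degree-$1$ component of $\KK[Y]$ is $\overline z\,\KK[\overline y_1,\ldots,\overline y_m]$, and combining this with the form $\overline\psi(\overline z)=a\overline z+b(\overline y)$ coming from the suspension structure (Proposition~\ref{drep}) forces $\overline\psi(\overline z)=a\overline z$ with $a\in\KK^\times$. Lifting gives $\psi(z)=az+b(y_1,\ldots,y_m)$ for some $b\in\KK[y_1,\ldots,y_m]$.

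Next, $\psi(x)$ is forced by the defining relation, namely $\psi(x)=P(\psi(y_1),\ldots,\psi(y_m),\psi(z))\big/\prod_i\psi(y_i)^{k_i}$. Running the same divisibility computation used to describe $\mathbb{G}$ (the one preceding Example~\ref{e2}), but now carrying the shift $b$, the requirement $\psi(x)\in\KK[X]$ becomes
$$
\prod_{i=1}^m y_i^{k_i}\ \Big|\ \big(P(\lambda_1y_{\sigma(1)},\ldots,\lambda_my_{\sigma(m)},\,az+b)-a^{d}P(y_1,\ldots,y_m,z)\big)
$$
in $\KK[y_1,\ldots,y_m,z]$. I would then read off the coefficient of $z^{d-1}$: since $s_{d-1}=0$, the only contribution comes from $(az+b)^d$ and equals $d\,a^{d-1}b(y_1,\ldots,y_m)$, so $\prod_i y_i^{k_i}$ divides $b$. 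This is the key point and the one I expect to require the most care, since it is what ties the filtration data back to the explicit equation of $X$.

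Finally, with $\prod_i y_i^{k_i}\mid b$ I set $h=-b\big/\big(a\prod_i y_i^{k_i}\big)\in\KK[y_1,\ldots,y_m]=\Ker\widetilde\partial$. Since $\widetilde\partial^2(z)=0$, the automorphism $\exp(h\widetilde\partial)\in\UU(\widetilde\partial)$ sends $z\mapsto z+h\prod_i y_i^{k_i}$ and fixes every $y_i$, so $\exp(h\widetilde\partial)\circ\psi$ sends $y_i\mapsto\lambda_iy_{\sigma(i)}$ and $z\mapsto az$. This composite lies in $G=S(X)\rightthreetimes(\KK^\times)^{m+1}$ and is an automorphism of $X$, hence belongs to $\mathbb{G}$ by the very definition of the canonical group. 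Therefore $\psi=\exp(-h\widetilde\partial)\circ g$ with $g\in\mathbb{G}$, which proves $\Aut(X)=\langle\mathbb{G},\UU(\widetilde\partial)\rangle$.
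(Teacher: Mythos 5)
Your proposal is correct and follows essentially the same route as the paper's own proof: pass to $\overline{\psi}=\Phi(\psi)$, use Lemma~\ref{mlsul} on $\mathrm{Gr}(\KK[X])=\KK[Y]$ together with Lemma~\ref{filtr} to get $\psi(y_i)=\lambda_i y_{\sigma(i)}$ and $\psi(z)=az+b(y_1,\ldots,y_m)$, extract the coefficient of $z^{d-1}$ (using $s_{d-1}=0$) to conclude $\prod_i y_{\sigma(i)}^{k_i}\mid b$, and then compose with $\exp(h\widetilde{\partial})\in\UU(\widetilde{\partial})$ to land in $\mathbb{G}$. The only differences are cosmetic (you justify $a\in\KK^\times$ via gradedness of $\overline{\psi}$ and phrase the key divisibility through the $\mathbb{G}$-membership computation rather than by substituting directly into the defining equation), so nothing further is needed.
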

\begin{proof}
Let $\psi\in\Aut(\KK[X])$. Consider $\overline{\psi}=\Phi(\psi)$. By Lemma~\ref{mlsul}, there is $\sigma\in\mathrm{S}_m$ such that $\overline{\psi}(\overline{y_i})=\lambda_i \overline{y_{\sigma(i)}}$, $\lambda_i\in\KK\setminus\{0\}$ and $k_i=k_{\sigma(i)}$. 

We have $\psi(y_i)=\lambda_i y_{\sigma(i)}+g$, where $g\in\KK[X]_{-1}$. But $\KK[X]_{-1}=\{0\}$. Therefore, $\psi(y_i)~=~\lambda_i y_{\sigma(i)}$.

By Lemma~\ref{filtr}, the automorphism $\psi$ preserves the filtration of $\KK[X]$. Hence, 
$$\psi(z)=a(y_1,\ldots, y_m)z+b(y_1,\ldots, y_m).$$ 
But since $\psi$ is an automorphism, $a\in\KK\setminus\{0\}$.

Let us substitute this to the equation of $X$:
\begin{multline*}
\psi(x)\prod_{i=1}^m\lambda_i^{k_i}\prod_{i=1}^m y_{\sigma(i)}^{k_i}=(a z+b(y_1,\ldots, y_m))^d+\\
+s_{d-2}\left(\lambda_1y_{\sigma(1)},\ldots, \lambda_m y_{\sigma(m)}\right)(az+b(y_1,\ldots, y_m))^{d-2}+\ldots+\\
+s_{0}\left(\lambda_1y_{\sigma(1)},\ldots, \lambda_m y_{\sigma(m)}\right).
\end{multline*}
Since coefficients of $z^{d-1}$ on the left side and on the right side are equal, we obtain:
$$
\prod_{i=1}^m y_{\sigma(i)}^{k_i} \mid b(y_1,\ldots, y_m).
$$
That is 
$$b(y_1,\ldots, y_m)=y_{\sigma(1)}^{k_1}\ldots y_{\sigma(m)}^{k_m}h(y_{\sigma(1)},\ldots,y_{\sigma(m)}).
$$
Denote 
$$
\xi=\exp\left(-\frac{h(y_1,\ldots,y_m)}{a}\widetilde{\partial}\right).
$$
Then $(\xi\circ\psi)(y_i)=\lambda_i y_{\sigma(i)}$, $(\xi\circ\psi)(z)=a z$.
Therefore, $\xi\circ\psi\in \mathbb{G}$.
\end{proof}

The group $\mathbb{G}$ is finite extension of a torus. This follows that 
$$\mathbb{G}\cap\UU(\widetilde{\partial})=\{\mathrm{id}\}.$$ 
Since $\mathrm{SAut}(X)$ is a normal subgroup in $\Aut(X)$, we obtain the following theorem.

\begin{theor} \label{main}
Let $X$ be a Danielewski variety. Let $\mathbb{G}$ be the canonical group of $X$ and~$\UU(\widetilde{\partial})$ be the big unipotent subgroup corresponding to the canonical LND $\widetilde{\partial}$  on $X$. Then the automorphism group of $X$ is isomorphic to a semidirect product
$\mathbb{G}\rightthreetimes \UU(\widetilde{\partial})$.
\end{theor}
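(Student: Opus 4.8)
The plan is to upgrade the generation statement of Proposition~\ref{mai} into an internal semidirect product decomposition. Recall that to exhibit a group as a semidirect product $\mathbb{G}\rightthreetimes\UU(\widetilde{\partial})$ with $\UU(\widetilde{\partial})$ normal, it suffices to check three things: that $\UU(\widetilde{\partial})$ is normal in $\Aut(X)$, that $\mathbb{G}\cap\UU(\widetilde{\partial})=\{\mathrm{id}\}$, and that the two subgroups together exhaust $\Aut(X)$. Proposition~\ref{mai} already shows that $\mathbb{G}$ and $\UU(\widetilde{\partial})$ generate $\Aut(X)$, so once normality and trivial intersection are in place the last point is automatic: the product of a normal subgroup with any subgroup is again a subgroup, and a subgroup containing a generating set is the whole group. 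Thus $\Aut(X)=\mathbb{G}\cdot\UU(\widetilde{\partial})$ with the desired splitting.

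First I would establish normality of $\UU(\widetilde{\partial})$. The key input is that $X$ is almost rigid: every LND of $\KK[X]$ is a replica $h\widetilde{\partial}$ with $h\in\Ker\widetilde{\partial}=\KK[y_1,\ldots,y_m]$. This is precisely what the filtration analysis underlying Lemma~\ref{filtr} yields, namely that every LND $\partial$ satisfies $\partial(\KK[X]_i)\subset\KK[X]_{i-1}$; combined with $\mathrm{ML}(X)=\KK[y_1,\ldots,y_m]$ from Proposition~\ref{dub}, this degree estimate forces each $\partial$ to be a replica of $\widetilde{\partial}$. Consequently the subgroup generated by all one-parameter unipotent subgroups coincides with $\UU(\widetilde{\partial})$, i.e. $\mathrm{SAut}(X)=\UU(\widetilde{\partial})$. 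Since conjugating an LND by an automorphism again produces an LND, the subgroup $\mathrm{SAut}(X)$ is invariant under all of $\Aut(X)$, so $\UU(\widetilde{\partial})$ is normal.

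Next I would verify $\mathbb{G}\cap\UU(\widetilde{\partial})=\{\mathrm{id}\}$. The group $\mathbb{G}$ is a finite extension of a torus sitting inside $S(X)\rightthreetimes(\KK^\times)^{m+1}$, so each of its elements acts on the generators $x,y_1,\ldots,y_m,z$ by a monomial (permutation-times-diagonal) transformation, which over our algebraically closed field of characteristic zero is semisimple; in particular such an element fixes $z$ up to scaling, $\varphi^*(z)=t_{m+1}z$. By contrast, a nontrivial element $\exp(h\widetilde{\partial})$ of the unipotent group $\UU(\widetilde{\partial})$ sends $z$ to $z+h\,y_1^{k_1}\cdots y_m^{k_m}$. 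Equating $t_{m+1}z=z+h\,y_1^{k_1}\cdots y_m^{k_m}$ and comparing powers of $z$ forces $t_{m+1}=1$ and $h=0$, so a common element must be the identity.

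Combining the three points yields the isomorphism $\Aut(X)\cong\mathbb{G}\rightthreetimes\UU(\widetilde{\partial})$. I expect the main obstacle to be the normality step, since it rests entirely on almost rigidity; the intersection computation and the passage from generation to a product decomposition are formal. I would therefore take particular care to confirm that the estimate $\partial(\KK[X]_i)\subset\KK[X]_{i-1}$ genuinely identifies every LND as a replica of $\widetilde{\partial}$, as this is exactly what equates $\UU(\widetilde{\partial})$ with the manifestly normal subgroup $\mathrm{SAut}(X)$.
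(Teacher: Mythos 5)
Your proposal is correct and follows essentially the same route as the paper: generation from Proposition~\ref{mai}, triviality of $\mathbb{G}\cap\UU(\widetilde{\partial})$ (the paper deduces this from $\mathbb{G}$ being a finite extension of a torus, you by comparing actions on $z$ --- the same point), and normality of $\UU(\widetilde{\partial})$ via its identification with $\mathrm{SAut}(X)$, which rests on every LND being a replica of $\widetilde{\partial}$. The paper relies on exactly this almost-rigidity input (obtained from the degree estimate in Lemma~\ref{filtr} together with Proposition~\ref{dub}) just as you do, so there is nothing further to compare.
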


\begin{cor}
Let $X$ be a Danielewski variety. The group $\Aut(X)$ is commutative if and only if the canonical group $\mathbb{G}$ of $X$ is trivial.
\end{cor}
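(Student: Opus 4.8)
The plan is to read everything off the decomposition $\Aut(X)\cong\mathbb{G}\rightthreetimes\UU(\widetilde{\partial})$ supplied by Theorem~\ref{main}, keeping in mind that $\UU(\widetilde{\partial})=\SAut(X)$ is a \emph{commutative normal} subgroup. The ``if'' direction is immediate: if $\mathbb{G}$ is trivial then $\Aut(X)=\UU(\widetilde{\partial})$, and this group is commutative because any two replicas $f\widetilde{\partial}$ and $h\widetilde{\partial}$ with $f,h\in\Ker\widetilde{\partial}=\KK[y_1,\ldots,y_m]$ commute as derivations, so their exponentials commute. For the converse I would argue by contraposition, showing that $\mathbb{G}$ acts \emph{faithfully} on $\UU(\widetilde{\partial})$ by conjugation; then every nontrivial $g\in\mathbb{G}$ fails to commute with some element of $\UU(\widetilde{\partial})$, and $\Aut(X)$ is noncommutative.

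To make this precise I would first compute the conjugation action. Fix $g=(\sigma,t_1,\ldots,t_{m+1})\in\mathbb{G}$ and $f\in\KK[y_1,\ldots,y_m]$. Since $\SAut(X)$ is normal, $g\circ\exp(f\widetilde{\partial})\circ g^{-1}=\exp\bigl(g\,(f\widetilde{\partial})\,g^{-1}\bigr)$, and the conjugated derivation $g\,(f\widetilde{\partial})\,g^{-1}$ is again an LND, hence a replica $\widetilde{f}\,\widetilde{\partial}$ by almost rigidity of $X$. Evaluating both sides on $z$, using $g^*(y_i)=t_iy_{\sigma(i)}$, $g^*(z)=t_{m+1}z$ and the relation $k_{\sigma(i)}=k_i$ valid for elements of $\mathbb{G}$ (so that $\prod_i y_{\sigma(i)}^{k_i}=\prod_j y_j^{k_j}$), I expect to obtain $\widetilde{f}=c\,g^*(f)$, where $c=t_{m+1}^{-1}\prod_{i=1}^m t_i^{k_i}$ and $g^*(f)=f(t_1y_{\sigma(1)},\ldots,t_my_{\sigma(m)})$. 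Thus conjugation by $g$ sends $\exp(f\widetilde{\partial})$ to $\exp\bigl(c\,g^*(f)\,\widetilde{\partial}\bigr)$.

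The heart of the argument, and the step I expect to require the most care, is the faithfulness. Suppose $g$ lies in the kernel of the conjugation action, i.e. $c\,g^*(f)=f$ for every $f\in\KK[y_1,\ldots,y_m]$. Taking $f=1$ gives $c=1$, so $g^*$ fixes $\KK[y_1,\ldots,y_m]$ pointwise; taking $f=y_j$ forces $\sigma(j)=j$ and $t_j=1$ for every $j$, so $\sigma=\mathrm{id}$ and $t_1=\cdots=t_m=1$; then $c=1$ yields $t_{m+1}=1$. Hence $g$ fixes $y_1,\ldots,y_m$ and $z$, and therefore fixes $x=P(y_1,\ldots,y_m,z)/\prod y_i^{k_i}$ as well, so $g=\mathrm{id}$.

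Putting these together, whenever $\mathbb{G}$ is nontrivial there is a nontrivial $g\in\mathbb{G}$, and by faithfulness there exists $f$ with $c\,g^*(f)\neq f$; the corresponding $\exp(f\widetilde{\partial})\in\UU(\widetilde{\partial})$ then does not commute with $g$, so $\Aut(X)$ is noncommutative. Combined with the ``if'' direction this establishes the equivalence. The only potential subtlety beyond bookkeeping is justifying that the conjugate of a replica is again a replica of $\widetilde{\partial}$, which follows from the normality of $\SAut(X)$ together with the fact (used throughout Section~\ref{las}) that every LND of $\KK[X]$ is a replica of the canonical one.
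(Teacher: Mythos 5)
Your proposal is correct and follows essentially the same route as the paper: the paper also reads the statement off Theorem~\ref{main} and tests a given $g=(\sigma,t_1,\ldots,t_{m+1})\in\mathbb{G}$ against the automorphisms $\exp(h\widetilde{\partial})$, obtaining the commutation condition $g\cdot(h\,y_1^{k_1}\ldots y_m^{k_m})=t_{m+1}\,h\,y_1^{k_1}\ldots y_m^{k_m}$ for all $h\in\KK[y_1,\ldots,y_m]$ and deducing $g=\mathrm{id}$, which is exactly your faithfulness argument (your constant $c=t_{m+1}^{-1}\prod_i t_i^{k_i}$ is the same bookkeeping, and your direct verification of the conjugated replica on generators is a legitimate way to justify the step the paper leaves implicit).
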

\begin{proof}
Let $g=(\sigma, t_1,\ldots, t_{m+1})\in\mathbb{G}$. For any $h\in\KK[y_1,\ldots, y_m]$ we can consider the automorphism $\alpha_h=\exp (h(y_1,\ldots,y_m)\widetilde{\partial})\in\UU(\widetilde{\partial})$. The condition that $g$ and $\alpha_h$ commute implies 
$$g\cdot (h(y_1,\ldots,y_m)y_1^{k_1}\ldots y_m^{k_m})=t_{m+1}h(y_1,\ldots,y_m)y_1^{k_1}\ldots y_m^{k_m}.$$
Since the action of $g$ is an automorphism, for all $h(y_1,\ldots,y_m)$ we have $g\cdot h=t_{m+1}h$. This is possible if and only if $t_{m+1}=1$ and $g=\mathrm{id}$.
\end{proof}

For an arbitrary Danielewski variety, the condition from Corollary~\ref{sold} does not give a criterium for automorphism group to be solvable. It gives only sufficient condition, which is not necessary. 

\begin{cor}
Let $X$ be a Danielewski variety. If there are no five indices $i_1, i_2, i_3, i_4, i_5~\in~\{1,\ldots, m\}$ such that $k_{i_1}=k_{i_2}=k_{i_3}=k_{i_4}=k_{i_5}$, then the group $\Aut(X)$ is solvable.
\end{cor}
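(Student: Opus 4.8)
The plan is to deduce solvability directly from the structural result Theorem~\ref{main}, so that the whole statement reduces to the elementary fact that the symmetric group $\mathrm{S}_n$ is solvable exactly when $n\leq 4$.

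First I would apply Theorem~\ref{main} to write $\Aut(X)\cong\mathbb{G}\rightthreetimes\UU(\widetilde{\partial})$, where $\UU(\widetilde{\partial})=\SAut(X)$ is the normal factor. By its very definition $\UU(\widetilde{\partial})$ is commutative, hence solvable. Since an extension of a solvable group by a solvable group is again solvable, it remains only to check that the canonical group $\mathbb{G}$ is solvable.

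Next I would exploit the inclusion $\mathbb{G}\subset G=S(X)\rightthreetimes(\KK^\times)^{m+1}$ coming from the definition of $\mathbb{G}$. As subgroups of solvable groups are solvable, it suffices to show that $G$ itself is solvable, and since the normal factor $(\KK^\times)^{m+1}$ is abelian, this in turn reduces to the solvability of $S(X)$. Recall that $S(X)\cong\mathrm{S}_{m_1}\times\ldots\times\mathrm{S}_{m_n}$, where $m=m_1+\ldots+m_n$ and each $\mathrm{S}_{m_i}$ permutes the variables $y_j$ that share one fixed value of $k_j$. The hypothesis that no five indices carry equal weight says precisely that every $m_i\leq 4$; consequently each factor $\mathrm{S}_{m_i}$ is solvable, and a finite direct product of solvable groups is solvable, so $S(X)$ is solvable. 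Then $G$ is solvable, hence so is its subgroup $\mathbb{G}$, and finally $\Aut(X)\cong\mathbb{G}\rightthreetimes\UU(\widetilde{\partial})$ is solvable.

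I do not expect any genuine obstacle here: all the difficulty has already been absorbed into Theorem~\ref{main} and the explicit embedding $\mathbb{G}\subset G$. The one conceptual remark, which I would include to explain why only a sufficient condition is asserted, is that $\mathbb{G}$ may be a \emph{proper} subgroup of $G$; even if some factor $\mathrm{S}_{m_i}$ is nonsolvable, the divisibility conditions on the coefficients $s_i$ defining $\mathbb{G}$ can force $\mathbb{G}$ to miss that nonsolvable part. This is exactly why the five-index condition fails to be necessary in the general Danielewski case, in contrast with the sharp criteria of Corollaries~\ref{sb} and~\ref{sold} for the monomial suspensions.
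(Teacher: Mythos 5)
Your proposal is correct and takes essentially the same route as the paper: both reduce solvability of $\Aut(X)\cong\mathbb{G}\rightthreetimes \UU(\widetilde{\partial})$ to solvability of $\mathbb{G}$ (since $\UU(\widetilde{\partial})$ is commutative), and then to solvability of $S(X)\cong\mathrm{S}_{m_1}\times\ldots\times\mathrm{S}_{m_n}$ with all $m_i\leq 4$ --- the only cosmetic difference being that the paper uses the projection $\varphi\colon\mathbb{G}\rightarrow S(X)$, whose kernel lies in the abelian group $(\KK^\times)^{m+1}$, where you instead embed $\mathbb{G}$ in the solvable group $S(X)\rightthreetimes(\KK^\times)^{m+1}$ and pass to a subgroup. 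Your closing remark on why the condition is merely sufficient also matches the paper's own comment preceding the corollary.
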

\begin{proof}
Since the group $\UU(\widetilde{\partial})$ is commutative, $\Aut(X)$ is solvable if and only if $\mathbb{G}$ is solvable. Let us consider the homomorphism 
$\varphi\colon \mathbb{G}\rightarrow S(X)$ given by 
$\varphi(\sigma,t_1,\ldots,t_{m+1})=\sigma$. 
The kernel of $\varphi$ is contained in $(\KK^\times)^{m+1}$, hence, it is commutative. Therefore, if $S(X)$ is solvable, then $\Aut(X)$ is. As in Corollaries~\ref{sb} and~\ref{sold}, $S(X)$ is not solvable if and only if there are five coinciding $k_i$.
\end{proof}

\begin{ex}
Let $X=\VV(xy^2-(z^3+z(y+1)+1))$. Then $\mathbb{G}=\{\mathrm{id}\}$, see Example~\ref{e2}. Therefore, $\Aut(X)=\UU(\widetilde{\partial})$, where $\widetilde{\partial}(y)=0$, $\widetilde{\partial}(z)=y$, $\widetilde{\partial}(x)=3z^2+y+1$. These automorphisms have the following form.

$$
\begin{cases}
x\mapsto x+(3z^2+y+1)h(y)+3zy(h(y))^2+y^2(h(y))^3;\\
y\mapsto y;\\
z\mapsto z+yh(y).
\end{cases}
$$
\end{ex}

\begin{ex}
Let $X=\VV(xy_1^2y_2^2-(z^3+z+y_1-y_2))$. By Example~\ref{e4}, 
$$
\mathbb{G}=\{(\mathrm{id},1,1,1), (\mathrm{id}, -1,-1,-1), ((1,2), -1,-1,1), ((1,2),1,1,-1)\}\cong\ZZ_2\oplus\ZZ_2.
$$
We have, $\Aut(X)\cong(\ZZ_2\oplus\ZZ_2)\rightthreetimes \UU(\widetilde{\partial})$, where 
$\widetilde{\partial}(y_1)=\widetilde{\partial}(y_2)=0$, $\widetilde{\partial}(z)=y_1^2y_2^2$, $\widetilde{\partial}(x)=3z^2+1$.
Automorphisms from $\UU(\widetilde{\partial})$ have the following form.
$$
\begin{cases}
x\mapsto x+(3z^2+1)h(y_1,y_2)+3zy_1^2y_2^2(h(y_1,y_2))^2+y_1^4y_2^4(h(y_1,y_2))^3;\\
y_1\mapsto y_1;\\
y_2\mapsto y_2;\\
z\mapsto z+y_1^2y_2^2h(y_1,y_2).
\end{cases}
$$
\end{ex}

\end{document}